%% file: main.tex
\documentclass[11pt]{article}
\usepackage[margin =1in]{geometry}
\usepackage{amssymb,amsmath,amsthm}
\usepackage{enumerate}
\usepackage{hyperref}
\usepackage{cite}
\usepackage[capitalize]{cleveref}
\usepackage{enumitem}
\usepackage{color}
\usepackage{xcolor}

\usepackage{cancel}
\usepackage{pgf}
\usepackage{svg}
\usepackage{pgfplots}
\usepackage{import}

\usepackage[utf8]{inputenc} 
\usepackage[T1]{fontenc}    
\usepackage{hyperref}       
\usepackage{url}            
\usepackage{booktabs}       
\usepackage{amsfonts}       
\usepackage{nicefrac}       
\usepackage{microtype}
\usepackage{multirow}
\usepackage{xfrac}
\usepackage{todonotes}
\usepackage{titlesec}
\usepackage{caption}

\usepackage{multirow}

\usepackage{caption}
\usepackage{float}

\titleformat{\subsubsection}[runin]
{\normalfont\normalsize\bfseries}{\thesubsubsection}{1em}{}

\usepackage{graphicx}

\usepackage{appendix}
\numberwithin{equation}{section}

\newcommand{\inclu}[0] {\ar@{^{(}->}}

\newcommand{\R}{\mathbb{R}}

\newcommand{\argmin}{\operatornamewithlimits{argmin}}

\newtheorem{thm}{Theorem}[section]
\newtheorem{theorem}[thm]{Theorem}

\newtheorem{proposition}[thm]{Proposition}

\newtheorem{lemma}[thm]{Lemma}

\newtheorem{corollary}[thm]{Corollary}

\newtheorem{assumption}[thm]{Assumption}

\newtheorem{remark}[thm]{Remark}

\crefname{claim}{claim}{claims}
\Crefname{claim}{Claim}{Claims}
\crefname{lem}{lemma}{lemmas}
\Crefname{lem}{Lemma}{Lemmas}
\crefname{algorithm}{algorithm}{algorithms}
\Crefname{algorithm}{Algorithm}{Algorithms}

\theoremstyle{definition}

\theoremstyle{definition}

\theoremstyle{definition}
\newtheorem*{claim*}{Claim}

\usepackage{mathtools}

\usepackage[ruled]{algorithm2e}

\usepackage[T1]{fontenc}
\usepackage{lmodern}
\crefname{figure}{Figure}{Figures}

\pgfplotsset{compat=1.18}

\begin{document}

    \title{A Parameter-Free Restart Scheme with Only\\ a Parallelizable $\log\log(1/\epsilon)$ Overhead}

    \author{Yue Wu\footnote{Johns Hopkins University, Department of Applied Mathematics and Statistics, \url{ywu166@jhu.edu}} \qquad Benjamin Grimmer\footnote{Johns Hopkins University, Department of Applied Mathematics and Statistics, \url{grimmer@jhu.edu}}}

	\date{}
	\maketitle

	\begin{abstract}

\input{abstract}
	\end{abstract}

    \input{intro}
    \input{prelim}

\input{theory}

    \paragraph{Acknowledgments.} This work was supported in part by the Air Force Office of Scientific Research under award number FA9550-23-1-0531. Benjamin Grimmer was additionally supported as a fellow of the Alfred P. Sloan Foundation.
    
    {\small
    \bibliographystyle{unsrt}
    \bibliography{references}
    }
    \appendix
    \input{appendix}
\end{document}

%% file: abstract.tex
It is well-known that first-order methods can offer accelerated convergence rates in the presence of growth structures. Restarting schemes provide a general tool for such speed-ups. These schemes typically either require unrealistic problem knowledge, incur logarithmic overhead factors in oracle complexity, and/or have a nontrivial initial burn-in phase. We present a parameter-free approach for restarting any first-order method, avoiding these three drawbacks. Our approach dynamically deploys parallel instances of a given first-order method communicating progress in the style of Renegar and Grimmer~\cite{Renegar2022restart}. Our optimized scheme avoids expensive burn-ins and only requires $\mathcal{O}(\log\log(1/\epsilon))$ parallel processes when the accelerated rate is sublinear.

%% file: intro.tex
\section{Introduction} \label{Sect:Intro}

In the design of first-order methods for convex problems of the form
\begin{equation} \label{eq:main-problem}
    f_\star = \min_{x\in\R^d} f(x),
\end{equation}
``restarting'' has become a standard technique.
Restarting is an approach to first-order optimization method design in which one periodically restarts a given algorithm, reinitializing at the current iterate while resetting any stepsize schedules or accumulated memory and momentum. 
Despite its simplicity, this approach has a long track record~\cite{nesterov2013,Odonoghue2015,Su2016,Roulet2020,Renegar2022restart} of offering provable theoretical or practical gains for algorithm convergence. This idea was discussed as early as~\cite{nemirovskii1985} simply as a technique for convergence analysis. Their choice of exactly when to restart the underlying algorithm relied, however, on (often unrealistic) knowledge of constants $\mu>0,p\geq 1$ such that the following growth bound holds:
\begin{equation}\label{eq:polynomial-growth}
    f(x) - f_\star \geq \mu\, \operatorname{dist}(x, X_\star)^p \qquad \forall x\in\{z : f(z)\leq f(x^{(0)})\},
\end{equation}
where $X_\star = \argmin_{x\in \R^d} f(x) \neq \emptyset$ is the nonempty set of minimizers, and $x^{(0)}$ is an initialization. The case $p=2$ recovers quadratic growth, a weaker condition than strong convexity.

Such growth bounds are widespread, holding for a general class of semialgebraic functions~\cite{bolte2017}. As examples, the works~\cite{nesterov2013,Giselsson2014,Odonoghue2015,fercoq2016,Wen2017,Liu2017,Fercoq2019,sujanani2025} showed speed-ups for smooth optimization via restarting accelerated momentum-type methods and~\cite{Yang2018,Roulet2020,Renegar2022restart,davis2024} for nonsmooth optimization with subgradient and smoothing-type methods. Among the many modern usages of restarting, first-order methods for conic optimization have had substantial practical success~\cite{Applegate2021PDLP1,xiong2024,applegate2026pdlp2}.

A proposed restarting procedure for a given first-order method must, at its core, provide a rule or scheme for when and how to restart the method. Central to existing restarting methods is determining a restart time for which one can ensure a contraction in the objective value gap (or a similar quantity like the distance to a minimizer) occurs. In designing such rules, existing methods fail to attain at least one of the following desirable properties:
\begin{itemize}
    \item[(A)] {\bf Problem Constant-Free Optimization.} Given additional knowledge of problem constants, one can often guarantee that it suffices to restart at a certain iteration count. For example, in $L$-smooth convex minimization with $p>2$ growth~\eqref{eq:polynomial-growth}, one can restart an accelerated method after $\mathcal{O}\left(\frac{\sqrt{L}}{ \mu^{\frac{1}{p}}} \cdot \frac{1}{(f(x^{(0)})-f_\star)^{\frac{1}{2}-\frac{1}{p}}}\right) $ iterations, ensuring a contraction in the objective value gap. Alternatively, if one knew $f_\star$, one could directly monitor the objective gap and restart once a prescribed contraction occurs. Although theoretically convenient, restarting approaches reliant on exact knowledge of any of $\mu,p,f_\star$ or other problem constants are often prohibitive in practice. Approaches of this type have been taken by~\cite{Gu2013,zhang2015,freund2018,Yang2018,Necoara2019}.
    \item[(B)] {\bf No Non-asymptotic Burn-In Time.} Some restart schemes are only guaranteed to work in a neighborhood of the minimizer~\cite{Gu2013,Wen2017}. As a result, there is a potentially long burn-in time that algorithms incur before the fast convergence due to restarting can begin. For example, the parameter-free parallel restarting scheme of~\cite{Renegar2022restart} when applied to $L$-smooth convex problems with growth of order $p>2$~\eqref{eq:polynomial-growth}, using $m=\lceil\log(1/\epsilon)\rceil$ processes, requires
    $$ \mathcal{O}\left(\sqrt{L \operatorname{dist}(x^{(0)}, X_\star)^2} + \frac{\sqrt{L}}{ \mu^{\frac{1}{p}}} \cdot \frac{1}{\epsilon^{\frac{1}{2}-\frac{1}{p}}} \right) $$
    iterations to reach an $\epsilon$-minimizer. Although the second term is the optimal order of complexity, the first term is non-negligible for some regimes of $\epsilon,L,\operatorname{dist}(x^{(0)}, X_\star)$.
    \item[(C)] {\bf Avoidance of Logarithmic Oracle Complexity Overheads.} Continuing the example of the above-mentioned parallel restarting scheme of~\cite{Renegar2022restart}, they require $\log(1/\epsilon)$ (parallelizable) gradient evaluations per iteration, leading to a log factor gap from the optimal order of complexity. As a result, they derive a 
    $$ \mathcal{O}\left(\sqrt{L \operatorname{dist}(x^{(0)}, X_\star)^2}\log(1/\epsilon) + \frac{\sqrt{L}}{ \mu^{\frac{1}{p}}} \cdot \frac{1}{\epsilon^{\frac{1}{2}-\frac{1}{p}}} \log(1/\epsilon)\right)$$
    gradient oracle complexity to reach an $\epsilon$-minimizer. Alternative schemes perform exponentially spaced grid searches over unknown parameters to approximately leverage approaches that require strong knowledge of problem constants~\cite{nesterov2013,Lin2015,Liu2017,Roulet2020,adcock2025,sujanani2025}. This too incurs logarithmic factors in oracle complexities.
\end{itemize}

Existing approaches to restart schemes all fall short on at least one of these metrics. 
As discussed above, approaches like those of~\cite{Gu2013,zhang2015,Necoara2019}, reliant on knowledge of problem structural parameters, and those of~\cite{freund2018}, reliant on knowledge of $f_\star$, fail (A). The techniques proposed by~\cite{Fercoq2019,ito2021,Renegar2022restart} incur non-asymptotic burn-in costs, failing (B). The restarting schemes of~\cite{Yang2018,Renegar2022restart} and parameter searching approaches~\cite{nesterov2013,Lin2015,Liu2017,Roulet2020,adcock2025,sujanani2025} suffer additional logarithmic factors, falling short of (C). Our main contribution is the design of a restarting scheme mitigating these three shortcomings. Namely, our proposed scheme will provide optimal iteration complexities while having the properties:
\begin{itemize}
    \item[(A)] When applied in the context of~\eqref{eq:polynomial-growth}, our scheme only requires a lower bound on the growth exponent $p$, which may be conservative without harming our non-asymptotic guarantees.
    \item[(B)] Under a mild monotonicity condition (satisfied by accelerated methods in both smooth and nonsmooth optimization), the burn-in time of our approach will always be less than the optimal order of complexity, preventing any non-asymptotic inefficiencies.
    \item[(C)] Across a range of smooth and nonsmooth problems, our scheme will incur at most a (parallelizable) $\log\log(1/\epsilon)$ factor in oracle complexity, improving on prior work's logarithmic factors.
\end{itemize}

In this work, we consider a general model of deterministic first-order methods and growth bounds to leverage via restarting. We consider a generic first-order method, denoted $\mathtt{fom}$, which, given an initialization $x^{(0)}$ and a target accuracy $\epsilon>0$, generates iterates $x^{(1)}, x^{(2)},\dots$ sequentially. For smooth convex problems, one could consider gradient descent or an accelerated, momentum-type method~\cite{Nesterov1983,Beck2009fista}. For nonsmooth convex problems, one could consider a subgradient method or a smoothing method~\cite{Nesterov2005smoothing,Beck2012smoothing}. The only requirement placed on this iterative method is that it possesses a convergence guarantee of the following type, providing a bound on the number of iterations needed to decrease the objective value by $\epsilon$: for any $\delta \ge 2\epsilon > 0$ and $D >0$, assume
\begin{equation}
\label{eq:intro-decrement-for-fom}
\begin{cases}
f(x^{(0)}) - f_\star \ge \delta \\
\operatorname{dist}(x^{(0)}, X_\star) \le D
\end{cases}
\implies \min\{ f(x^{(k)}): 0\le k\le K_{\mathtt{dec}}(\epsilon, \delta, D) \} \le f(x^{(0)}) -\epsilon.
\end{equation}
For example, if $f$ is an $L$-smooth convex function and $\mathtt{fom}$ is Nesterov's accelerated method, one has $K_{\mathtt{dec}}(\epsilon, \delta, D) = \sqrt{2LD^2/(\delta-\epsilon)}$, see Proposition~\ref{prop:accel-method}.

To generalize the polynomial growth bound~\eqref{eq:polynomial-growth}, we assume there exists a strictly increasing and invertible function $G:[0,+\infty) \to [0,+\infty)$ such that the following holds
\begin{equation}
\label{eq:intro-growth-structure}
f(x)-f_\star \ge G\left( \operatorname{dist}(x, X_\star) \right) \qquad \forall x\in\{z : f(z)\leq f(x^{(0)})\}.
\end{equation}
The typical growth bound~\eqref{eq:polynomial-growth} then corresponds to $G(t)=\mu\cdot t^p$.

\paragraph{Our Contributions.} We develop a parameter-free, parallel restarting scheme, addressing the limitations (A), (B), and (C). Our scheme follows the parallel structure of Renegar and Grimmer~\cite{Renegar2022restart} while avoiding the logarithmic overhead and the burn-in costs inherent to their strategy and parameter choices. Applying our approach yields optimal iteration complexities while only incurring a $\log\log(1/\epsilon)$ factor overhead in oracle complexity across the range of typical smooth and nonsmooth optimization settings, summarized in Section~\ref{Sect:cor-monotone}.

In particular, for any target accuracy $\epsilon>0$, our proposed restart scheme is parameterized by a strictly increasing and unbounded sequence $\epsilon_k>0$ with $\epsilon_0 = \epsilon/2$. Given such a sequence, our Corollary~\ref{cor:monotone} establishes an oracle complexity bound of
\begin{equation*}
    (\hat{m}+1) \sum_{k=0}^{\hat{m}} \left(1+\frac{2\epsilon_{k+1}}{\epsilon_k}\right)K_{\mathtt{dec}}(\epsilon_k, 2\epsilon_k, G^{-1}(2\epsilon_k)),\qquad \hat{m} = \inf\left\{k : \epsilon_k > f(x^{(0)}) - f_\star\right\}
\end{equation*}
for computing an $\epsilon$-minimizer.
Optimizing the selection of the sequence $\epsilon_k$ improves our scheme's convergence guarantees. As an example, consider minimizing a convex $f$ that is $L$-smooth and has $p\geq 4$-order growth~\eqref{eq:polynomial-growth} with $\mathtt{fom}$ as Nesterov's accelerated method. By selecting the doubly exponential sequence
$$\epsilon_k = \frac{\epsilon}{2 e}\exp\left((9/8)^k\right),$$
our restarting method provides an oracle complexity of
$$ \mathcal{O}\left( \frac{\sqrt{L}}{ \mu^{\frac{1}{p}}} \cdot \frac{1}{\epsilon^{\frac{1}{2}-\frac{1}{p}}} \cdot \log\log\left( \frac{f(x^{(0)}) - f_\star}{\epsilon} \right) \right) \ .  $$
This is the optimal oracle complexity~\cite{nemirovskii1985}, up to the above doubly logarithmic factor. Prior adaptive works have required at least a (singly) logarithmic factor overhead in this special case. This overhead factor can be fully parallelized, meaning that one can compute these $\log\log\left( \frac{f(x^{(0)}) - f_\star}{\epsilon}\right)$ oracle calls in parallel at each iteration. Hence, our scheme's iteration complexity is exactly the optimal order.

A $\log\log(1/\epsilon)$ overhead factor has previously arisen in the design of adaptive algorithms for strongly convex nonsmooth minimization: the restarting algorithm in~\cite{Carmon2022} incurs an additional $\mathcal{O}(\log\log(1/\epsilon))$ factor compared with the known optimal non-adaptive rate. Applying our restarting scheme to this specialized setting recovers their results. Hence, our scheme provides a generalization of this previously isolated $\log\log(1/\epsilon)$ phenomenon. At most an overhead of $\mathcal{O}(\log\log(1/\epsilon))$ is needed to adapt in settings seeking an improved sublinear convergence rate by restarting a first-order method with a growth bound~\eqref{eq:polynomial-growth} with unknown $\mu,p$. 

One may consider the improvement small from a single logarithmic factor to a doubly logarithmic factor, as it will often only improve performance by small factors. However, the considered settings are fundamental, and so our results narrow down the fundamental costs associated with adaptive algorithm design.
For stochastic problems seeking high probability guarantees, the lower bounds of~\cite{Carmon2025} showed a $\Omega(\log\log(1/\epsilon))$ factor cost is inherent to adapting to unknown parameters. Hence, one may conjecture that the doubly logarithmic cost of our scheme is optimal. We establish tightness of our analysis technique, showing it cannot improve beyond this doubly logarithmic limit. We leave open this important question of general lower bounds.

\paragraph{Outline.} Section 2 presents our dynamic restarting scheme and its related details. Section 3 establishes a unified theoretical guarantee and a useful corollary. Section 4 applies the unified theory to three general problem classes and derives parameter-free corollaries under growth order $p\ge 2$ in~\eqref{eq:polynomial-growth}. Section 5 treats the complementary case $p\in [1,2)$.

%% file: prelim.tex
\section{Preliminaries} \label{Sect:Prelim}

We briefly summarize our notation. We use $\mathcal{O}(\cdot)$, $\Omega(\cdot)$, and $\Theta(\cdot)$ to denote the standard asymptotic notations, all with respect to the limit $\epsilon\to 0$. For any $y\in \mathbb{R}^d$, let $\operatorname{dist}(y, X_\star) = \inf_{x\in X_\star} \|y-x\|_2$, where $\|\cdot\|_2$ is the two-norm. For a convex function $f: \mathbb{R}^d \to \mathbb{R}\cup \{+\infty\}$, a vector $g \in \mathbb{R}^d$ is a subgradient of $f$ at $x_0 \in \mathbb{R}^d$ if $f(x) \ge f(x_0) + \langle g, x-x_0 \rangle$ for all $x \in \mathbb{R}^d$. The subdifferential of $f$ at $x_0$, defined as the set of all subgradients of $f$ at $x_0$, is denoted by $\partial f(x_0)$.

For notational ease, we define the maximum or the summation over an empty set to be zero. This convention allows us to avoid additional boundary cases in the iteration count bounds of Section~\ref{Sect:main-results}. For example, $\max_{K_1\le k\le K_2} h(k)=0$ and $\sum_{k=K_1}^{K_2} h(k) = 0$ if $K_1 > K_2$.

\subsection{Standard Assumptions}

{\bf Decrement guarantee.} As previously introduced in~\eqref{eq:intro-decrement-for-fom}, we consider any iterative first-order method $\mathtt{fom}(\epsilon)$ satisfying the following decrement guarantee. We require that the function value decreases by at least $\epsilon$ within $K_{\mathtt{dec}}(\epsilon, \delta, D)$ iterations, provided the initial objective gap is at least $\delta$ and the initial distance to $X_\star$ is at most $D$. In the following assumption, let $\{x^{(k)}\}_{k\ge 0}$ denote the sequence of iterates generated by $\mathtt{fom}(\epsilon)$ initialized at $x^{(0)}$.
\begin{assumption}[Decrement guarantee on $\mathtt{fom}$]
\label{assumption:decrement-for-fom}
For a fixed $\mathtt{fom}$, there exists a non-negative finite-valued function $K_{\mathtt{dec}}(\cdot,\cdot,\cdot)$ such that for any $\delta \ge 2\epsilon > 0$ and $D > 0$, the method $\mathtt{fom}(\epsilon)$ has
\begin{equation}
\label{eq:assum-decrement-for-fom}
\begin{cases}
f(x^{(0)}) - f_\star \ge \delta \\
\operatorname{dist}(x^{(0)}, X_\star) \le D
\end{cases}
\implies \min\{ f(x^{(k)}): 0\le k\le K_{\mathtt{dec}}(\epsilon, \delta, D) \} \le f(x^{(0)}) -\epsilon.
\end{equation}
\end{assumption}

Note that if~\eqref{eq:assum-decrement-for-fom} holds for a fixed tuple $(\hat{\epsilon}, \hat{\delta}, \hat{D})$, then for any $\tilde{\delta}\ge\hat{\delta}, \tilde{D}\le\hat{D}$, setting $K_{\mathtt{dec}}(\hat{\epsilon}, \tilde{\delta}, \tilde{D}) = K_{\mathtt{dec}}(\hat{\epsilon}, \hat{\delta}, \hat{D})$ also trivially satisfies~\eqref{eq:assum-decrement-for-fom} for $(\hat{\epsilon}, \tilde{\delta}, \tilde{D})$. Thus, without loss of generality, assume the function $K_{\mathtt{dec}}(\epsilon, \delta, D)$ is nonincreasing in $\delta$ and nondecreasing in $D$. We do not have such natural monotonicity in $\epsilon$, since $\epsilon$ is also a parameter of $\mathtt{fom}(\epsilon)$. Changing $\epsilon$ results in a different first-order method, so the corresponding $K_{\mathtt{dec}}$ are not comparable. Section~\ref{Sect:cor-monotone} provides some specific examples of $\mathtt{fom}$ with corresponding $K_{\mathtt{dec}}(\epsilon, \delta, D)$.\\

\noindent {\bf Function growth.} We consider objective functions with general growth structures in~\eqref{eq:intro-growth-structure}, formalized in the following assumption.
\begin{assumption}[Function growth]
\label{assumption:function-growth}
There exists a strictly increasing and invertible function $G:[0,+\infty) \to [0,+\infty)$ such that the following holds for $x\in \{z: f(z)\le f(x^{(0)})\}$
\begin{equation*}
f(x)-f_\star \ge G\left( \operatorname{dist}(x, X_\star) \right).
\end{equation*}
\end{assumption}
A typical choice of $G$ is $G(t) = \mu t^p$. In Section~\ref{Sect:cor-monotone}, we consider examples of this polynomial form.

\subsection{A Dynamic Restarting Scheme}

The closely related work~\cite{Renegar2022restart} introduced a restarting scheme that consists of multiple parallel processes, in which each process runs a copy of a first-order method $\mathtt{fom}$. In their scheme, the number of processes is determined a priori and remains fixed throughout execution. Moreover, each process is assigned an exponentially spaced decrement target. Once a process attains its target decrease, it restarts and its iterate is passed to other processes for their potential use.

Motivated by this framework, we propose a dynamic restarting scheme. Our scheme also consists of a number of parallel processes, each running a copy of $\mathtt{fom}(\epsilon)$. Specifically, we let the $k$-th process $P_k$ run $\mathtt{fom}(\epsilon_k)$ with a predetermined parameter $\epsilon_k>0$ from a strictly increasing sequence tending to infinity. Two key novelties of our scheme are that the number of active parallel processes adapts dynamically and the targets $\epsilon_k$ can be optimized over. These contrast~\cite{Renegar2022restart}, which fixes the number of processes and $\epsilon_k=2^{k-1}\epsilon$ in advance. In our scheme, we start with $N_0$ processes (by default, $N_0=1$) and add new processes as the scheme proceeds. We also simplify communication to use a centralized rule for passing iterates between processes, focusing only on the best newly generated point $\overline{x}^{(t)}$ at each iteration. 

\SetAlgoLined
\LinesNumbered
\SetAlgoLongEnd

\begin{algorithm}
\caption{Dynamic Restarting Scheme with First-Order Processes}
\label{algo:general-framework}
\KwIn{An initialization $x^{(0)}$, decrement targets $\{\epsilon_k\}_{k=0}^{\infty}$, a known first-order method $\mathtt{fom}(\epsilon)$, initial process count $N_0$ ($=1$ by default).}
Round $0$, initialization: For each $k=0,\dots,N_0-1$, launch a process $P_k$ and set a task for it.\\

\SetKwBlock{round}{For round $t=1,2,\dots$:}{end of a round}
\SetKwIF{If}{ElseIf}{Else}{if}{then}{else if}{else}{}
\SetKwBlock{steptwo}{Step 2:}{}

\round{
\textbf{Step 1:} Run one iteration for all processes in the scheme, and select an iterate $\overline{x}^{(t)}$ attaining the minimum objective value among all the new iterates in round $t$.\\

\steptwo(For each process $P_k$ in the scheme, check whether $\overline{x}^{(t)}$ satisfies the task for $P_k$.){
\If{the task for $P_k$ is accomplished}{
Restart $P_k$ at $\overline{x}^{(t)}$ and update the task for $P_k$.\\
\If{$P_k$ is the highest indexed process entering round $t$}{
Launch a new process $P_{k+1}$ and set a task for it.
}
}
}

}
\end{algorithm}

\noindent {\bf Basic Setup of the Scheme.} We assume all the current processes in the scheme iterate simultaneously. That is, all the processes run one iteration in each round in Algorithm~\ref{algo:general-framework}. Then the number of rounds in Algorithm~\ref{algo:general-framework} measures iteration complexity, while the total computational cost additionally depends on the number of processes (typically at most $\mathcal{O}(\log\log(1/\epsilon))$ in applications, see the corollaries in Section~\ref{Sect:cor-monotone}). Each round consists of two steps. We first run one iteration for all processes and collect the iterate $\overline{x}^{(t)}$ achieving the minimum objective value among all the new iterates. Then we check whether $\overline{x}^{(t)}$ satisfies any of the restart conditions, which are characterized by tasks. In the remainder of this section, we explain the details of the task and restart.

\begin{remark}
From an implementation perspective, the process-level iterations within a single round need not be executed in parallel. For example, one may sequentially generate each process's next iterate in a round, slowing the runtime, typically by a factor of $\mathcal{O}(\log\log(1/\epsilon))$. Our theory bounds iteration complexity, the total number of rounds, and oracle complexity, the total number of $\mathtt{fom}$ steps taken. Thus, whether parallel computing is used will determine which measure is most relevant in estimating runtime.
\end{remark}
\begin{remark}
Our model assumes that all the processes are synchronized, with each process running exactly one iteration in each round. One possible generalization is an asynchronous scheme, as studied by~\cite{Renegar2022restart}, where the processes are allowed to make iterations at different rates. Such an extension would be natural for our framework as well, but we restrict attention to the synchronous setting.
\end{remark}

\noindent {\bf Task and Restart.} Each process $P_k$ in the scheme has a ``task'' that determines when it restarts. The task for $P_k$ is defined by a ``reference point'' $(x_{\mathtt{ref}})_k^{(t)}$ and a decrement target $\epsilon_k$. For each $t\ge 1$ and any $P_k$ that is in the scheme at the beginning of round $t$, we use $(x_{\mathtt{ref}})_k^{(t)}$ to denote the reference point of $P_k$ entering round $t$. The task for $P_k$ is to obtain a point $\overline{x}$ such that $f(\overline{x}) \le f\left( (x_{\mathtt{ref}})_k^{(t)} \right) - \epsilon_k$. So the algorithm stores $f\left( (x_{\mathtt{ref}})_k^{(t)} \right)$ for $P_k$, and the task remains unchanged until the reference point is updated to a different point in some round. At initialization in Algorithm~\ref{algo:general-framework}, for each $k=0,\dots,N_0-1$, when the process $P_k$ is launched at the initial point $x^{(0)}$, we also set the initial reference point as $(x_{\mathtt{ref}})_k^{(1)} \gets x^{(0)}$. We say the first $N_0$ processes $P_0,\dots,P_{N_0-1}$ start at round $0$.

For each $t\ge 1$, at Step 2 of round $t$ in Algorithm~\ref{algo:general-framework}, we check the completion status of the task for each process, using the iterate $\overline{x}^{(t)}$ obtained from Step 1. If the task for $P_k$ is not accomplished, then no update to $P_k$ is needed, and we simply set $(x_{\mathtt{ref}})_k^{(t+1)} \gets (x_{\mathtt{ref}})_k^{(t)}$. If the task for $P_k$ is accomplished, we restart $P_k$ at $\overline{x}^{(t)}$, which formally entails the following update for $P_k$: we clear the memory of past iterates in $P_k$, set $\overline{x}^{(t)}$ as the new initial point of $P_k$, and continue running $\mathtt{fom}(\epsilon_k)$ in the subsequent rounds. Following the restart, $P_k$ will behave as if $\mathtt{fom}(\epsilon_k)$ were run from the new initial point $\overline{x}^{(t)}$. We also update the task for $P_k$ by setting $\overline{x}^{(t)}$ as the new reference point, that is, $(x_{\mathtt{ref}})_k^{(t+1)} \gets \overline{x}^{(t)}$. Moreover, if $P_k$ is also the ``highest'' indexed process in the scheme at the beginning of round $t$, which means the scheme contains exactly $(k+1)$ processes $P_0,\dots,P_{k}$ entering round $t$, then we add a new process $P_{k+1}$ into the scheme. In this case, we say the process $P_{k+1}$ starts at round $t$. In the next round $t+1$, the new process $P_{k+1}$ will perform its first iteration of $\mathtt{fom}(\epsilon_{k+1})$ from the initial point $\overline{x}^{(t)}$. When $P_{k+1}$ is initially launched, we also assign it the task with reference point $(x_{\mathtt{ref}})_{k+1}^{(t+1)} \gets \overline{x}^{(t)}$ and the decrement target $\epsilon_{k+1}$.

%% file: theory.tex
\section{Main Results and Analysis}
\label{Sect:main-results}

Our main result, Theorem~\ref{thm:main}, provides an iteration complexity bound for Algorithm~\ref{algo:general-framework} to compute a $(2\epsilon_0)$-optimal solution under Assumptions~\ref{assumption:decrement-for-fom} and~\ref{assumption:function-growth}. In our analysis, at most $m$ (defined below in~\eqref{eq:m}) processes are relevant to this iteration complexity. We also define a related quantity $\hat{m}$ to upper bound the total number of processes in the scheme.

Suppose the initial objective gap is $\Delta_0 = f(x^{(0)}) - f_\star$. A central idea of our restarting scheme is to let the decrement targets scale upward, so that they eventually surpass the unknown initial gap. To this end, we assume the decrement target sequence $\{\epsilon_k\}$ is strictly increasing with $\epsilon_k \to \infty$. Let $m$ be the first index such that $\epsilon_k$ reaches half the initial gap, and $\hat{m}$ the first to strictly exceed the full initial gap.
\begin{equation}
\label{eq:m}
m:= \inf \left\{k: \epsilon_k \ge \frac{\Delta_0}{2} \right\}, \quad\text{and}\quad \hat{m}:= \inf \left\{k: \epsilon_k > \Delta_0 \right\}.
\end{equation}
The core idea of our analysis is to treat the following two phases toward obtaining a $(2\epsilon_0)$-optimal solution, corresponding to the two parts in the statement of Theorem~\ref{thm:main}: \textit{Phase 1}, a burn-in phase during which the scheme launches up to $m$ processes, and \textit{Phase 2}, a convergence phase where every process $P_k$ restarts repeatedly and its objective gap shrinks to $\mathcal{O}(\epsilon_k)$. In each phase, the number of rounds between consecutive restarts of each process $P_k$ can be bounded by terms of the following form, assuming $\delta_\text{max} \ge \delta_\text{min} \ge 2\epsilon_k$ and $G$ is given or is clear from the context.
\begin{equation}
\label{eq:Q}
Q_k\left( \delta_\text{min}, \delta_\text{max} \right) := \sup_{\delta \in [\delta_\text{min}, \delta_\text{max} ]} K_{\mathtt{dec}} \left( \epsilon_k, \delta, G^{-1}(\delta) \right) .
\end{equation}
Given any first-order method and function growth structure satisfying Assumptions~\ref{assumption:decrement-for-fom} and~\ref{assumption:function-growth} respectively, and any strictly increasing sequence $\{\epsilon_k\}$ diverging to infinity, we have the following iteration complexity for computing a $(2\epsilon_0)$-optimal solution.

\begin{theorem}
\label{thm:main}
Suppose Assumption~\ref{assumption:decrement-for-fom} holds for $\mathtt{fom}(\epsilon)$ and Assumption~\ref{assumption:function-growth} holds for $f$. Then Algorithm~\ref{algo:general-framework} computes a $(2\epsilon_0)$-optimal solution no later than round
\begin{equation*}
\begin{split}
& \underbrace{\max_{N_0\le s\le m} \sum_{k= N_0-1}^{s-1} Q_k\left( \epsilon_{s-1}+ \sum_{j=k}^{s-1} \epsilon_j, \Delta_0- \sum_{j= N_0-1}^{k-1} \epsilon_j \right) }_{\text{Phase 1}} \\
&+ \underbrace{\sum_{k=0}^{m-1}\ \max_{1\le r < \frac{2\epsilon_{k+1}}{\epsilon_{k}} }\ \sum_{i=0}^{r-1} Q_k\left( (r+1-i) \epsilon_{k}, \min \{ 2\epsilon_{k+1} + (1-i) \epsilon_{k}, \Delta_0 \} \right) }_{\text{Phase 2}} ,
\end{split}
\end{equation*}
and at most $\max\{\hat{m}+1, N_0\}$ processes are launched.
\end{theorem}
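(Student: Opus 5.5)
We prove the statement by tracking the best objective value $\overline{f}^{(t)} := \min_{s\le t} f(\overline{x}^{(s)})$ together with the reference values $f((x_{\mathtt{ref}})_k^{(t)})$. Two basic facts are used repeatedly. First, every reference point is some $\overline{x}^{(s)}$ (or $x^{(0)}$), and each restart of $P_k$ lowers its reference value by at least $\epsilon_k$. Second, consider a process $P_k$ restarted at a point $x$ with gap $\delta := f(x)-f_\star \ge 2\epsilon_k$. Since $f(x)\le f(x^{(0)})$, Assumption~\ref{assumption:function-growth} gives $\operatorname{dist}(x,X_\star)\le G^{-1}(\delta)$. Assumption~\ref{assumption:decrement-for-fom} then shows that within $K_{\mathtt{dec}}(\epsilon_k,\delta,G^{-1}(\delta))$ rounds one of $P_k$'s own iterates has value at most $f(x)-\epsilon_k$. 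Because $\overline{x}^{(t)}$ is the best new iterate of the round, its value is at most this, so $P_k$'s task is completed by then. If $\delta$ is only known to lie in $[\delta_{\min},\delta_{\max}]$, the waiting time is bounded by $Q_k(\delta_{\min},\delta_{\max})$. This restart lemma is the engine of the proof, and it also applies to a newly launched process, whose reference equals its starting point.

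\emph{Phase 1 (launching).} Let $s$ be the number of processes that have been launched, and let $T_s$ be the round at which $P_{s}$ starts. We show by induction that $T_s \le \sum_{k=N_0-1}^{s-1} Q_k(\cdot,\cdot)$ with the arguments given in the theorem, as long as the current gap stays above $2\epsilon_0$; otherwise the algorithm is already done. The induction step works as follows. $P_{k+1}$ is launched exactly when $P_k$, as the top process, restarts. Between the launch of $P_k$ and its first restart, $P_k$ runs from a point whose gap is at most $\Delta_0-\sum_{j=N_0-1}^{k-1}\epsilon_j$, because each earlier launch consumed a decrement $\epsilon_j$. We need a lower bound on this gap to have $\delta\ge 2\epsilon_k$ and the claimed $\delta_{\min}$. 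Here the key point is that if the current gap ever falls below $\epsilon_{s-1}+\sum_{j=k}^{s-1}\epsilon_j$, then the later launches up to index $s$ either cannot all occur or will not be needed, and we can switch to Phase 2 analysis. Taking the maximum over the terminal index $s\le m$ accounts for not knowing when launching stops. Processes with index $\ge m$ matter only for the process count: once $\epsilon_k>\Delta_0$, the task of $P_k$ can never be completed, so no process beyond $P_{\hat m+1}$ is ever launched. This gives the bound $\max\{\hat m+1,N_0\}$.

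\emph{Phase 2 (contraction).} For $k=m-1,m-2,\dots,0$, we show that after an additional budget of $\max_{r}\sum_{i}Q_k(\ldots)$ rounds the gap is at most $2\epsilon_k$. Suppose the gap is at most $2\epsilon_{k+1}$, which holds at the start since $\epsilon_m\ge\Delta_0/2$, and which holds for later $k$ by the previous step. Then $P_k$ needs at most $r<2\epsilon_{k+1}/\epsilon_k$ successive restarts before the gap drops below $2\epsilon_k$, because each restart lowers its reference by $\epsilon_k$. Before the $i$-th of these restarts, the reference gap lies in $[(r+1-i)\epsilon_k,\ \min\{2\epsilon_{k+1}+(1-i)\epsilon_k,\Delta_0\}]$. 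The lower end comes from counting backward from the final value $\ge\epsilon_k$; the extra $\epsilon_k$ slack is needed because $P_k$'s reference may lag the current best point by up to $\epsilon_k$. Applying the restart lemma and summing over $i$ gives the bound, and maximizing over $r$ removes the dependence on the unknown number of restarts. Summing over $k$ down to $0$ yields a $2\epsilon_0$-optimal point.

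The main obstacle is the bookkeeping that links reference points to the current best value. A process's reference may be stale: it was set at its last restart, while other processes have since improved $\overline{f}$. We must show that staleness costs at most one $\epsilon_k$, since otherwise $P_k$ would already have restarted. We also need the intervals passed to $Q_k$ to be valid, with $\delta_{\min}\ge 2\epsilon_k$. I would handle this first by proving an invariant: $f((x_{\mathtt{ref}})_k^{(t)}) - \overline{f}^{(t)} < \epsilon_k$ for every active $k$ and every $t$. The Phase 1 induction with the maximum over $s$ is the next most delicate point.
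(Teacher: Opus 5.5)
\textbf{The gap is in Phase~1.} You never pin down the index at which launching stops, and the condition you do state (``as long as the current gap stays above $2\epsilon_0$'') is not what the argument needs.

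To bound the time from the launch of $P_k$ to its first restart via Assumption~\ref{assumption:decrement-for-fom}, you need $P_k$'s initial gap to satisfy $\Delta_k\ge 2\epsilon_k$, not merely $\Delta_k>2\epsilon_0$. A process launched with $2\epsilon_0<\Delta_k\le 2\epsilon_k$ may never restart; if $\Delta_k<\epsilon_k$ it provably cannot. Then $P_{k+1}$ is never launched, even though the gap may stay above $2\epsilon_0$ for as long as the lower processes take.

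Likewise, the lower endpoint $\epsilon_{s-1}+\sum_{j=k}^{s-1}\epsilon_j$ and the restriction $s\le m$ both require knowing $\Delta_{s-1}>2\epsilon_{s-1}$ for the terminal index $s$. The remark that later launches ``cannot all occur or will not be needed'' does not supply this. The same omission affects your Phase~2 start: beginning at $k=m-1$ with ``gap $\le 2\epsilon_m$'' presupposes that $P_{m-1},P_{m-2},\dots$ are in the scheme, which need not happen.

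\textbf{The missing idea} is the paper's index
\[
n:=\inf\{k: P_k \text{ appears and } \Delta_k\le 2\epsilon_k\}.
\]
It is finite because any $P_k$ with $\Delta_k>2\epsilon_k$ restarts within $K_{\mathtt{dec}}(\epsilon_k,\Delta_k,G^{-1}(\Delta_k))$ rounds, so $P_{k+1}$ appears, while the number of processes is bounded. Taking $s=n$ settles everything:
\begin{itemize}
\item every $k<n$ has $\Delta_k>2\epsilon_k$, so the decrement guarantee applies to each launch;
\item telescoping $\Delta_k-\Delta_{k+1}\ge\epsilon_k$ down from $\Delta_{n-1}>2\epsilon_{n-1}$ gives $\Delta_k>\epsilon_{n-1}+\sum_{j=k}^{n-1}\epsilon_j$;
\item $2\epsilon_{n-1}<\Delta_{n-1}\le\Delta_0$ gives $n\le m$;
\item Phase~2 begins at $S_n$ with $P_0,\dots,P_n$ present and gap $\Delta_n\le 2\epsilon_n$, so the stages $k\ge n$ cost nothing.
\end{itemize}

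With that in place, your Phase~2 matches the paper's argument. Your staleness invariant is a valid replacement for the paper's two-case analysis at round $T_{k-1}$, provided you state it after Step~2, i.e.\ for $(x_{\mathtt{ref}})_k^{(t+1)}$. It holds because every restart or launch occurs at a new best point.

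\textbf{Two smaller fixes.}
\begin{itemize}
\item The lower endpoint $(r+1-i)\epsilon_k$ comes from $\delta_{r-1}>2\epsilon_k$, the gap at the restart preceding the first one with gap $\le 2\epsilon_k$. It does not come from a final value $\ge\epsilon_k$.
\item Since $P_{\hat m}$ can never complete its task, it is $P_{\hat m+1}$ itself that is never launched. That is what yields the bound $\max\{\hat m+1,N_0\}$; ``no process beyond $P_{\hat m+1}$'' is off by one.
\end{itemize}
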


Recall that $Q_k$ is defined in~\eqref{eq:Q} through the term $K_{\mathtt{dec}}(\epsilon, \delta, G^{-1}(\delta))$. All the $Q_k$ functions in Theorem~\ref{thm:main} simplify significantly if $K_{\mathtt{dec}}(\epsilon, \delta, G^{-1}(\delta))$ satisfies the monotonicity condition defined below in~\eqref{eq:cor-monotone-eq1}. In this case, the upper bound in Theorem~\ref{thm:main} simplifies.

\begin{corollary}
\label{cor:monotone}
Suppose Assumption~\ref{assumption:decrement-for-fom} holds for $\mathtt{fom}(\epsilon)$ and Assumption~\ref{assumption:function-growth} holds for $f$. If
\begin{equation}
\label{eq:cor-monotone-eq1}
K_{\mathtt{dec}}(\epsilon, \delta, G^{-1}(\delta)) \text{ is nonincreasing in } \delta \text{ on the interval } [ 2\epsilon, +\infty), \forall \epsilon>0,
\end{equation}
then Algorithm~\ref{algo:general-framework} computes a $(2\epsilon_0)$-optimal solution no later than round
\begin{equation}
\label{eq:cor-monotone-eq2}
\sum_{k=0}^{m-1} \left( 1+ \frac{2\epsilon_{k+1}}{\epsilon_{k}} \right) K_{\mathtt{dec}} \left( \epsilon_k, 2 \epsilon_{k}, G^{-1}(2 \epsilon_{k}) \right) .
\end{equation}
\end{corollary}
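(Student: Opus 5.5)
The plan is to derive the corollary directly from Theorem~\ref{thm:main} by showing that, under the monotonicity hypothesis~\eqref{eq:cor-monotone-eq1}, every $Q_k$ term collapses to a single value of $K_{\mathtt{dec}}$. By definition~\eqref{eq:Q}, $Q_k(\delta_{\min},\delta_{\max})$ is a supremum over $\delta\in[\delta_{\min},\delta_{\max}]$ of $K_{\mathtt{dec}}(\epsilon_k,\delta,G^{-1}(\delta))$. Whenever $\delta_{\min}\ge 2\epsilon_k$, monotonicity gives
\begin{equation*}
Q_k(\delta_{\min},\delta_{\max})\le K_{\mathtt{dec}}(\epsilon_k,\delta_{\min},G^{-1}(\delta_{\min}))\le K_{\mathtt{dec}}(\epsilon_k,2\epsilon_k,G^{-1}(2\epsilon_k)) =: \kappa_k .
\end{equation*}
This holds even when the interval is empty, since then the term is $0$ by convention. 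So the first step is to check that every lower argument appearing in Theorem~\ref{thm:main} is at least $2\epsilon_k$. In Phase 1, the lower argument is $\epsilon_{s-1}+\sum_{j=k}^{s-1}\epsilon_j\ge \epsilon_{s-1}+\epsilon_k\ge 2\epsilon_k$, using $k\le s-1$ and the fact that $\epsilon$ is increasing. In Phase 2, the lower argument is $(r+1-i)\epsilon_k$ with $i\le r-1$, hence at least $2\epsilon_k$.

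Next I would count terms. For Phase 2, the inner sum over $i=0,\dots,r-1$ is at most $r\kappa_k$, and $r<2\epsilon_{k+1}/\epsilon_k$. The max over $r$ is therefore bounded by $(2\epsilon_{k+1}/\epsilon_k)\kappa_k$, so Phase 2 is at most $\sum_{k=0}^{m-1}(2\epsilon_{k+1}/\epsilon_k)\kappa_k$. For Phase 1, fixing $s\le m$, the sum runs over $k=N_0-1,\dots,s-1$ and is at most $\sum_{k=N_0-1}^{s-1}\kappa_k\le\sum_{k=0}^{m-1}\kappa_k$, since all $\kappa_k\ge 0$. This bound is uniform in $s$, so it also bounds the max. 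One edge case needs care: if $N_0=0$ the index $k=-1$ would appear, but $N_0\ge1$ by the algorithm's setup, and if $N_0>m$ the Phase 1 max is empty and equals $0$.

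Adding the two phase bounds gives $\sum_{k=0}^{m-1}(1+2\epsilon_{k+1}/\epsilon_k)\kappa_k$, which is exactly~\eqref{eq:cor-monotone-eq2}. I expect no real obstacle here. The only steps requiring attention are verifying the lower-argument inequality $\delta_{\min}\ge 2\epsilon_k$ in both phases, which is what licenses applying monotonicity on $[2\epsilon_k,\infty)$, and handling empty ranges through the paper's zero convention.
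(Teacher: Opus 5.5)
Your proposal is correct and follows essentially the same route as the paper. Both arguments collapse each $Q_k$ to $K_{\mathtt{dec}}$ at its lower endpoint via monotonicity, check that this endpoint is at least $2\epsilon_k$ in both phases, and then bound Phase 1 by $\sum_{k=0}^{m-1}\kappa_k$ and Phase 2 by $\sum_{k=0}^{m-1}(2\epsilon_{k+1}/\epsilon_k)\kappa_k$ using $r<2\epsilon_{k+1}/\epsilon_k$. Your handling of possibly empty intervals through the zero convention is, if anything, slightly more careful than the paper's stated equality $Q_k(\delta_{\min},\delta_{\max})=K_{\mathtt{dec}}(\epsilon_k,\delta_{\min},G^{-1}(\delta_{\min}))$.
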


Section~\ref{Sect:cor-monotone} presents three examples satisfying~\eqref{eq:cor-monotone-eq1} where Corollary~\ref{cor:monotone} applies.

\subsection{Analysis of the Restarting Scheme}

Now, we analyze the general restarting scheme in Algorithm~\ref{algo:general-framework}. We first prove Lemma~\ref{lemma:number-of-processes}, which is an upper bound on the total number of processes in the scheme. Then we analyze the iteration complexity through the two phases described earlier. Lemma~\ref{lemma:T0-upper-bound} gives an upper bound for the number of rounds in Phase 1. Lemma~\ref{lemma:Tk-T'k-upper-bound} provides a useful result to analyze Phase 2. 

We introduce two useful notions $S_k$ and $\Delta_k$. The initial gap of each process will be used repeatedly in our analysis. This can be characterized by the reference point $(x_{\mathtt{ref}})_k^{(t)}$ together with the ``starting time'' $S_k$, where $S_k$ is the round in which $P_k$ starts. Note $S_k$ is well-defined if and only if $P_k$ appears in the scheme. Let $\Delta_k$ be the objective gap when $P_k$ is first launched. We additionally define $\overline{x}^{(0)} := x^{(0)}$. Considering round $S_k$, we see that the initial point of $P_k$ is $\overline{x}^{(S_k)}$. Another key observation here is that the initial reference point of each process is set as its initial point. Thus, the four notions $S_k$, $\overline{x}^{(S_k)}$, $(x_{\mathtt{ref}})_k^{(S_k+1)}$, and $\Delta_k$ relate as follows.
\begin{equation}
\label{eq:Delta_k}
\begin{split}
S_k :=&\, \inf\{t\ge 0: P_k \text{ is in the scheme by the end of round $t$} \} ,\\
\overline{x}^{(S_k)} =& \text{ the initial point of $P_k$} = (x_{\mathtt{ref}})_k^{(S_k+1)},\\
\Delta_k :=& \text{ the initial objective gap of $P_k$} = f(\overline{x}^{(S_k)}) -f_\star = f\left((x_{\mathtt{ref}})_k^{(S_k+1)}\right) - f_\star .
\end{split}
\end{equation}

\subsubsection{Number of Processes}

We show the following upper bound for the number of processes in the scheme. As noted earlier, this bound involves the quantity $\hat{m}$.

\begin{lemma}
\label{lemma:number-of-processes}
The number of processes in the scheme is at most $\max \{\hat{m}+1 , N_0\}$.
\end{lemma}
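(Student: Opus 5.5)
A new process $P_{k+1}$ is launched only in a round $t$ at which $P_k$ is the highest-indexed process and its task is accomplished. So it suffices to show that a process $P_k$ with $k\ge \hat{m}$ can never accomplish a task. Then no process beyond $P_{\hat{m}}$ is ever launched, and the total count is at most $\hat{m}+1$. If $N_0 > \hat{m}+1$, the initial processes already exceed this and none of the highest-indexed processes (indices $\ge N_0-1\ge \hat m$) can trigger a launch. This gives the bound $\max\{\hat{m}+1,N_0\}$.

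The key step is a monotonicity observation: every point ever used as a reference point or initial point has objective value at most $f(x^{(0)})$. Reference points are either $x^{(0)}$ or some $\overline{x}^{(t)}$ that accomplished a task, i.e.\ $f(\overline{x}^{(t)})\le f((x_{\mathtt{ref}})_j^{(t)})-\epsilon_j < f((x_{\mathtt{ref}})_j^{(t)})$ for some $j$. By induction on $t$, every reference point $x_{\mathtt{ref}}$ satisfies $f(x_{\mathtt{ref}})\le f(x^{(0)})$. Hence $f(x_{\mathtt{ref}})-f_\star\le \Delta_0$.

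Now suppose $k\ge\hat{m}$ and $P_k$ accomplishes its task in round $t$. Then
\[
f(\overline{x}^{(t)})\le f\bigl((x_{\mathtt{ref}})_k^{(t)}\bigr)-\epsilon_k\le f(x^{(0)})-\epsilon_k < f(x^{(0)})-\Delta_0 = f_\star,
\]
using $\epsilon_k\ge\epsilon_{\hat m}>\Delta_0$ because the sequence is increasing. This contradicts the definition of $f_\star$ as the minimum value.

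Therefore the highest-indexed process $P_k$ can spawn $P_{k+1}$ only when $k<\hat{m}$. Starting from the $N_0$ initial processes, the largest index ever present is $\max\{\hat{m},N_0-1\}$. Only the bookkeeping of the base case $\overline{x}^{(0)}=x^{(0)}$ needs care, and I expect no real obstacle.
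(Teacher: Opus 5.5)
Your proof is correct and is essentially the paper's argument. The paper also keeps the top process's reference value at most $f(x^{(0)})$, via the chain $f\bigl((x_{\mathtt{ref}})_{k+1}^{(S_{k+1}+1)}\bigr) \le f\bigl((x_{\mathtt{ref}})_k^{(S_k+1)}\bigr) - \epsilon_k$ of initial points, and then uses $\epsilon_{\hat m} > \Delta_0$ to show that $P_{\hat m}$ (or $P_{N_0-1}$ when $\hat m + 1 < N_0$) never restarts; your single invariant over all reference points just merges the paper's two cases into one.
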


\begin{proof}[Proof of Lemma~\ref{lemma:number-of-processes}]
Consider the following two cases.

If $\hat{m}+1 < N_0$, by the definition of $\hat{m}$ in~\eqref{eq:m} and the monotonicity of $\{\epsilon_k\}$, we have $f\left((x_{\mathtt{ref}})_{N_0-1}^{(1)}\right) = f(x^{(0)}) < f_\star + \epsilon_{\hat{m}} \le f_\star + \epsilon_{N_0-1}$. So the process $P_{N_0-1}$ never restarts, and the number of processes is always $N_0$.

If $\hat{m}+1\ge N_0$, we need to prove the number of processes is at most $\hat{m}+1$. For any $k\ge N_0-1$, if $P_{k+1}$ appears in the scheme, then we have $f\left((x_{\mathtt{ref}})_{k+1}^{(S_{k+1}+1)}\right) \le f\left((x_{\mathtt{ref}})_k^{(S_k+1)}\right) -\epsilon_k < f\left((x_{\mathtt{ref}})_k^{(S_k+1)}\right)$, since the process $P_{k+1}$ is launched when $P_{k}$ first restarts. This inequality inductively implies $f\left((x_{\mathtt{ref}})_{\hat{m}}^{(S_{\hat{m}}+1)}\right) \le f\left((x_{\mathtt{ref}})_{N_0-1}^{(S_{N_0-1}+1)}\right) = f\left((x_{\mathtt{ref}})_{N_0-1}^{(1)}\right) = f(x^{(0)}) < f_\star + \epsilon_{\hat{m}}$, where the last step is by~\eqref{eq:m}. So the process $P_{\hat{m}}$ (if it ever appears in the scheme) never restarts, which implies $P_{\hat{m}+1}$ never appears in the scheme.
\end{proof}

Lemma~\ref{lemma:number-of-processes} directly implies the bound on the number of processes in Theorem~\ref{thm:main}.

\subsubsection{Number of Rounds in Phase 1}

We formally define ``Phase 1'' as all the rounds before the occurrence of a certain event, which will be precisely specified in~\eqref{eq:T_0} through a special quantity $n$. Note the concept of phases is introduced solely for expository clarity, while the algorithm does not need to know what ``phase'' it is in. Using $\Delta_k$, we define $n$ as follows. The well-definedness of $n$ follows directly from Lemma~\ref{lemma:n-is-welldefined}, which is proved later.
\begin{equation}
\label{eq:n}
n:=\inf \left\{k: \text{$P_k$ appears in the scheme and } \Delta_k \le 2\epsilon_k \right\}.
\end{equation}
So~\eqref{eq:n} considers the first process $P_k$ with an initial gap of at most $2\epsilon_k$. If $n=0$, this means the initial objective gap satisfies $\Delta_0 \le 2\epsilon_0$. Then Algorithm~\ref{algo:general-framework} computes a $(2\epsilon_0)$-optimal solution by round $0$, which is a trivial case of Theorem~\ref{thm:main}. Hereafter, we only consider $n\ge 1$.

As another fundamental component of our analysis, Assumption~\ref{assumption:decrement-for-fom} will be used frequently. Recall that Assumption~\ref{assumption:decrement-for-fom} gives the following implication for any $\delta \ge 2\epsilon > 0$ and $D >0$,
\begin{equation}
\label{eq:Kdec-implication}
\begin{cases}
f(x^{(0)}) - f_\star \ge \delta \\
\operatorname{dist}(x^{(0)}, X_\star) \le D
\end{cases}
\implies \min\{ f(x^{(k)}): 0\le k\le K_{\mathtt{dec}}(\epsilon, \delta, D) \} \le f(x^{(0)}) -\epsilon.
\end{equation}

\begin{lemma}
\label{lemma:n-is-welldefined}
Suppose Assumption~\ref{assumption:decrement-for-fom} holds for $\mathtt{fom}(\epsilon)$. Then $n$ is well-defined in~\eqref{eq:n}, that is, there exists some $k\ge 0$ such that $P_k$ appears in the scheme and $\Delta_k \le 2\epsilon_k$.
\end{lemma}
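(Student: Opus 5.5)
We argue by contradiction. Suppose no process $P_k$ that ever appears in the scheme has $\Delta_k \le 2\epsilon_k$. We split into two cases according to whether the set of launched processes is finite, which by Lemma~\ref{lemma:number-of-processes} it always is (at most $\max\{\hat m+1,N_0\}$). So let $P_K$ be the highest-indexed process ever launched.

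First we handle the indices below $N_0$. The processes $P_0,\dots,P_{N_0-1}$ all start at round $0$ with $\Delta_k=\Delta_0$. If $\Delta_0 \le 2\epsilon_0$, then $k=0$ witnesses well-definedness. Otherwise, since $\epsilon_k$ is increasing, it suffices to reason about the highest process.

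The key claim is that the top process $P_K$ must eventually restart whenever $\Delta_K > 2\epsilon_K$, which would launch $P_{K+1}$ and contradict maximality. To see this, note that after $P_K$ starts, its reference point has gap $\Delta_K \ge 2\epsilon_K$. By Assumption~\ref{assumption:function-growth}, its initial distance to $X_\star$ is at most $G^{-1}(\Delta_K)$; the growth bound applies because $\overline{x}^{(S_K)}$ lies in the initial sublevel set, since all $\overline{x}^{(t)}$ chosen are restart or reference points with value at most $f(x^{(0)})$. This sublevel-set membership should be checked carefully: iterates of $\mathtt{fom}$ themselves need not lie in the sublevel set, but $\overline{x}^{(S_K)}$ is only used when it accomplished a task, hence decreased below a reference value $\le f(x^{(0)})$. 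Then we apply~\eqref{eq:Kdec-implication} with $\epsilon=\epsilon_K$, $\delta=\Delta_K\ge 2\epsilon_K$, and $D=G^{-1}(\Delta_K)$. Since $P_K$ is not restarted before completing its task, it runs $\mathtt{fom}(\epsilon_K)$ uninterrupted from its initial point, and within $K_{\mathtt{dec}}$ rounds some iterate of $P_K$ has value at most $f((x_{\mathtt{ref}})_K)-\epsilon_K$. Because $\overline{x}^{(t)}$ is the minimum over all new iterates in that round, $\overline{x}^{(t)}$ also accomplishes $P_K$'s task. Thus $P_K$ restarts, and being the highest process, it launches $P_{K+1}$, a contradiction. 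One subtlety remains: the iterate achieving the decrement could be the initial point ($k=0$ in the min), but that is impossible since $\epsilon_K>0$.

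The main obstacle is the boundary case $\Delta_K$ exactly between values, together with the need for the strict inequality $\Delta_K>2\epsilon_K$ versus the assumption's requirement $\delta\ge 2\epsilon$. Taking $\delta=\Delta_K$ works directly here. Hence the negation fails, so some launched $P_k$ has $\Delta_k\le 2\epsilon_k$. We also note consistency with Lemma~\ref{lemma:number-of-processes}: at $k=\hat m$ we have $\Delta_{\hat m}\le\Delta_0<\epsilon_{\hat m}$, so no contradiction arises there.
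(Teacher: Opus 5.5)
Your argument is essentially the paper's. Under the negation, the decrement guarantee forces each launched process to eventually accomplish its task and, as the top process, launch a successor, which contradicts the finite process count of Lemma~\ref{lemma:number-of-processes}. The paper runs this as an induction showing every $P_k$ appears; taking the maximal launched index $K$, as you do, is the same argument, and your extra details (the minimum defining $\overline{x}^{(t)}$, the $k=0$ term) are correct.

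One correction: this lemma assumes only Assumption~\ref{assumption:decrement-for-fom}, yet you invoke Assumption~\ref{assumption:function-growth} to get $D=G^{-1}(\Delta_K)$. That is unnecessary. Apply~\eqref{eq:Kdec-implication} with any $D>0$ satisfying $D\ge\operatorname{dist}(\overline{x}^{(S_K)},X_\star)$, which is finite since $X_\star\neq\emptyset$; this is exactly what the paper does with $D_k$. Your sublevel-set discussion then becomes unnecessary, and the separate case analysis for indices below $N_0$ is redundant as well.
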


\begin{proof}[Proof of Lemma~\ref{lemma:n-is-welldefined}]
Suppose $f\left((x_{\mathtt{ref}})_k^{(S_k+1)}\right) - f_\star = \Delta_k> 2\epsilon_k$ for any $P_k$ that appears in the scheme. Let $D_k= \operatorname{dist}\left((x_{\mathtt{ref}})_k^{(S_k+1)}, X_\star\right)$. By~\eqref{eq:Kdec-implication}, $P_k$ itself will produce an iterate satisfying its task after at most $K_{\mathtt{dec}}(\epsilon_k, \Delta_k, D_k)$ rounds even without intervention from other processes. So $P_k$ will eventually restart, leading to the launch of $P_{k+1}$. This inductively implies that $P_k$ appears in the scheme for any $k\ge 0$, which contradicts Lemma~\ref{lemma:number-of-processes}.
\end{proof}

Since we have just established the existence of $P_n$ in Lemma~\ref{lemma:n-is-welldefined}, let $T_0<\infty$ be the round when $P_n$ is launched, which can be characterized as
\begin{equation}
\label{eq:T_0}
T_0 := \inf\{t\ge 0: P_n \text{ is in the scheme by the end of round $t$} \} = S_n.
\end{equation}
As mentioned earlier, we formally define ``Phase 1'' as the whole phase before $P_n$ is launched, which contains $T_0$ rounds. We have the following result for $T_0$. Note, by the convention stated in Section~\ref{Sect:Prelim}, the upper bound below becomes $0$ if $n\le N_0-1$.

\begin{lemma}
\label{lemma:T0-upper-bound}
Suppose Assumption~\ref{assumption:decrement-for-fom} holds for $\mathtt{fom}(\epsilon)$ and Assumption~\ref{assumption:function-growth} holds for $f$. Then the process $P_n$ is launched no later than round
\begin{equation*}
T_0 \le \sum_{k= N_0-1}^{n-1} K_{\mathtt{dec}} \left( \epsilon_k, \Delta_k, G^{-1}(\Delta_k) \right).
\end{equation*}
\end{lemma}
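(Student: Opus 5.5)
The plan is to telescope over the launch times, writing $T_0 = S_n = S_{N_0-1} + \sum_{k=N_0-1}^{n-1} (S_{k+1}-S_k)$. Here $S_{N_0-1}=0$, because the first $N_0$ processes start at round $0$. If $n\le N_0-1$, then $P_n$ is already present at round $0$ and the bound is the empty sum $0$, so I will assume $n\ge N_0$. The whole proof then reduces to the per-step claim
\[
S_{k+1}-S_k \le K_{\mathtt{dec}}\left(\epsilon_k,\Delta_k,G^{-1}(\Delta_k)\right) \qquad \text{for each } N_0-1\le k\le n-1 .
\]
For these indices, Lemma~\ref{lemma:n-is-welldefined} and the definition of $n$ give that $P_k$ appears with $\Delta_k>2\epsilon_k$. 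Moreover, each such $P_{k+1}$ does eventually appear, since $P_n$ appears and launches happen in index order.

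For the per-step claim, fix such a $k$. Since $k\ge N_0-1$, the process $P_{k+1}$ is launched exactly at the first round in which $P_k$ accomplishes its task. Indeed, $P_k$ stays the highest-indexed process until its first restart; for $k=N_0-1$ this holds at round $0$, and for $k\ge N_0$ it holds because $P_k$ was just launched by $P_{k-1}$. So $S_{k+1}$ is the first round $t>S_k$ in which $f(\overline{x}^{(t)})\le f(\overline{x}^{(S_k)})-\epsilon_k$, where the reference point is $\overline{x}^{(S_k)}$ until that first restart.

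Next I verify the hypotheses of \eqref{eq:Kdec-implication} for $\mathtt{fom}(\epsilon_k)$ started at $\overline{x}^{(S_k)}$, with $\delta=\Delta_k$ and $D=G^{-1}(\Delta_k)$. First, $\Delta_k\ge 2\epsilon_k$ holds because $k<n$. Second, the iterates $\overline{x}^{(t)}$ are chosen as the best new iterate each round, and restarts occur only on strict decrease. I therefore need $f(\overline{x}^{(S_k)})\le f(x^{(0)})$, so that Assumption~\ref{assumption:function-growth} applies. This follows because every process launch follows a task completion, which gives the chain $f(\overline{x}^{(S_{k})})\le f(\overline{x}^{(S_{k-1})})-\epsilon_{k-1}\le\cdots\le f(x^{(0)})$. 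Growth then gives $G(\operatorname{dist}(\overline{x}^{(S_k)},X_\star))\le\Delta_k$, so the distance is at most $G^{-1}(\Delta_k)$ because $G$ is strictly increasing.

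The assumption then says that among the first $K:=K_{\mathtt{dec}}(\epsilon_k,\Delta_k,G^{-1}(\Delta_k))$ iterates of $P_k$, some iterate achieves value at most $f(\overline{x}^{(S_k)})-\epsilon_k$. The $0$-th iterate cannot do so because $\epsilon_k>0$, so this iterate is produced at some round $S_k+j$ with $1\le j\le K$. Since $P_k$ runs undisturbed until its first restart, its $j$-th iterate is generated in round $S_k+j$. The selected point satisfies $f(\overline{x}^{(S_k+j)})\le$ that iterate's value, so the task is accomplished by round $S_k+K$, which gives $S_{k+1}\le S_k+K$.

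The main obstacle I expect is this bookkeeping: confirming that $P_k$ is not restarted before accomplishing its task, which is true because restart happens only upon completion. Summing the per-step bounds over $k$ then gives the stated bound on $T_0$.
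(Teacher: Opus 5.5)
Your proposal is correct and matches the paper's proof. Both dispose of the case $n\le N_0-1$ (where $T_0=0$ and the sum is empty). Both then bound each gap $S_{k+1}-S_k$ for $N_0-1\le k\le n-1$ by the decrement guarantee with $\delta=\Delta_k>2\epsilon_k$ and $D=G^{-1}(\Delta_k)$, and sum. Your extra bookkeeping makes explicit what the paper's terse proof leaves implicit: that $P_k$'s first restart launches $P_{k+1}$, that $f(\overline{x}^{(S_k)})\le f(x^{(0)})$ so the growth assumption applies, and that $P_k$'s $j$-th iterate is produced in round $S_k+j$.
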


\begin{proof}[Proof of Lemma~\ref{lemma:T0-upper-bound}]
Consider the following two cases.

(i) If $n\le N_0-1$, then $P_n$ is launched at the initialization, so $T_0=0$.

(ii) If $n\ge N_0$, then by the definition of $n$ in~\eqref{eq:n}, we have $f\left((x_{\mathtt{ref}})_k^{(S_k+1)}\right) - f_\star = \Delta_k > 2\epsilon_k$ for any $N_0-1\le k\le n-1$. Then by~\eqref{eq:Kdec-implication}, for $N_0-1\le k\le n-1$, the time between the launches of $P_k$ and $P_{k+1}$ is at most $K_{\mathtt{dec}}\left(\epsilon_k, \Delta_k, \operatorname{dist}((x_{\mathtt{ref}})_k^{(S_k+1)}, X_\star)\right)$, which is further upper bounded by $K_{\mathtt{dec}} \left( \epsilon_k, \Delta_k, G^{-1}(\Delta_k) \right)$. So the total rounds before the launch of $P_n$ is at most $\sum_{k= N_0-1}^{n-1} K_{\mathtt{dec}} \left( \epsilon_k, \Delta_k, G^{-1}(\Delta_k) \right)$.
\end{proof}

Lemma~\ref{lemma:T0-upper-bound} establishes an upper bound on the number of rounds in Phase 1, namely the number of rounds needed to launch the process $P_n$.

\subsubsection{Number of Rounds in Phase 2}

In a similar fashion as $T_0$, we analyze the number of rounds needed for certain events to occur. ``Phase 2'' will also be formally defined via one of these special events.

We use ``(re)start'' to mean either a restart or a start/launch. For $k=1,\dots,n$, we recursively define the round $T'_k$ to be the latest round no later than $T_{k-1}$ when $P_{n-k}$ (re)starts, and $T_k$ as the earliest round not preceding $T'_{k}$ when $P_{n-k}$ (re)starts with objective gap at most $2\epsilon_{n-k}$, which can be formally written as
\begin{equation}
\label{eq:T'_k}
T'_k := \sup\{0\le t\le T_{k-1}: \text{$P_{n-k}$ (re)starts at round $t$} \}, \quad \forall 1\le k\le n.
\end{equation}
\begin{equation}
\label{eq:T_k}
T_k := \inf\{t\ge T'_{k}: \text{$P_{n-k}$ (re)starts at round $t$, and } f(\overline{x}^{(t)})-f_\star\le 2\epsilon_{n-k} \}, \quad \forall 1\le k\le n.
\end{equation}
The existence of $T'_k$ and $T_k$ can be established inductively. Suppose $T_{k-1}<\infty$ is well-defined, then $P_{n-k+1}$ is in the scheme at round $T_{k-1}$, and so is $P_{n-k}$. So the set in~\eqref{eq:T'_k} is nonempty and bounded, which ensures $T'_k<\infty$ exists. By~\eqref{eq:Kdec-implication}, $P_{n-k}$ will eventually restart if its objective gap is larger than $2\epsilon_{n-k}$, with the objective gap reducing by at least $\epsilon_{n-k}$ at each restart. Thus, from round $T'_k$ onward, $P_{n-k}$ will eventually restart with an objective gap at most $2\epsilon_{n-k}$, which guarantees $T_k<\infty$ is well-defined by~\eqref{eq:T_k}.

Algorithm~\ref{algo:general-framework} is guaranteed to compute a $(2\epsilon_0)$-optimal solution at round $T_n$, since $T_n$ is defined to be a round at which $P_0$ (re)starts with objective gap at most $2\epsilon_{0}$. We already have an upper bound for $T_0$ from the analysis for Phase 1, so it remains to control $T_n-T_0$. Now we formally define ``Phase 2'' as the whole phase between round $T_0$ and round $T_n$. Phase 2 contains $T_n-T_0$ rounds if $T_0\le T_n$. Even if $T_0>T_n$, we still focus on upper bounding $T_n-T_0$, and all the analysis below remains valid. Observe that
\begin{equation}
\label{eq:phase2-decompose}
T_n - T_0 = \sum_{k=1}^{n} (T_{k}- T_{k-1})\le \sum_{k=1}^{n} (T_{k}- T'_{k}) ,
\end{equation}
then it suffices to upper bound the terms $T_{k}- T'_{k}$. Such bounds can be established through $Q_k$.

\begin{lemma}
\label{lemma:Tk-T'k-upper-bound}
Suppose Assumption~\ref{assumption:decrement-for-fom} holds for $\mathtt{fom}(\epsilon)$ and Assumption~\ref{assumption:function-growth} holds for $f$. Then for any $1\le k\le n$,
\begin{equation}
\label{eq:lemma-Tk-T'k-upper-bound-eq1}
T_k - T'_{k} \le \max_{1\le r < \frac{2\epsilon_{n-k+1}}{\epsilon_{n-k}} }\ \sum_{i=0}^{r-1} Q_{n-k}\left( (r+1-i) \epsilon_{n-k}, \min \{ 2\epsilon_{n-k+1} + (1-i) \epsilon_{n-k}, \Delta_0 \} \right).
\end{equation}
\end{lemma}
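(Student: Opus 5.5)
Write $j := n-k$ throughout. The plan is to follow $P_j$ between the round $T'_k$ at which it last (re)starts (no later than $T_{k-1}$) and the round $T_k$ at which it first (re)starts with gap at most $2\epsilon_j$. Let the successive (re)starts of $P_j$ in this window occur at rounds $T'_k = t_0 < t_1 < \dots < t_r = T_k$, and set $g_i := f(\overline{x}^{(t_i)}) - f_\star$. By definition of $T_k$, we have $g_i > 2\epsilon_j$ for $i<r$, and $g_r \le 2\epsilon_j$.

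\textbf{Upper bound on the initial gap $g_0$.} Two cases arise. If $k=1$, then $P_{n-1}$ restarting at $T'_1$ either launches $P_n$ (so $g_0$ relates to $\Delta_n \le 2\epsilon_n$) or occurs before that launch. If $k\ge 2$, then at round $T_{k-1}$ the process $P_{j+1}$ (re)starts with gap at most $2\epsilon_{j+1}$. Since $P_j$ does not restart in $(T'_k, T_{k-1}]$, its task is not accomplished by $\overline{x}^{(T_{k-1})}$. Hence
\[
f(\overline{x}^{(T_{k-1})}) > f(\overline{x}^{(T'_k)}) - \epsilon_j, \qquad\text{so}\qquad g_0 < 2\epsilon_{j+1} + \epsilon_j .
\]
For $k=1$ the same argument applies with $T_0 = S_n$ and $\Delta_n \le 2\epsilon_n$. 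In every case $g_0 \le \Delta_0$, because objective values along restarts only decrease.

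\textbf{Bounding the number of restarts and each gap.} Each restart decreases the reference value by at least $\epsilon_j$, so $g_i \le g_0 - i\epsilon_j < 2\epsilon_{j+1} + (1-i)\epsilon_j$, and also $g_i \le \Delta_0$. Conversely, since $g_{r-1} > 2\epsilon_j$ and $g_i \ge g_{r-1} + (r-1-i)\epsilon_j$, we get $g_i > (r+1-i)\epsilon_j$. Combining the bound on $g_0$ with $g_0 > (r+1)\epsilon_j$ gives $r < 2\epsilon_{j+1}/\epsilon_j$. The case $r=0$ would mean $T_k = T'_k$ and contributes nothing.

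\textbf{Bounding each inter-restart interval.} For $i<r$, $P_j$ runs $\mathtt{fom}(\epsilon_j)$ from a point with gap $g_i \ge 2\epsilon_j$. By Assumption~\ref{assumption:function-growth}, its distance to $X_\star$ is at most $G^{-1}(g_i)$. By \eqref{eq:Kdec-implication}, $P_j$ itself therefore produces an iterate meeting its task within $K_{\mathtt{dec}}(\epsilon_j, g_i, G^{-1}(g_i))$ rounds. The selected $\overline{x}$ is at least as good as that iterate, so
\[
t_{i+1} - t_i \le K_{\mathtt{dec}}(\epsilon_j, g_i, G^{-1}(g_i)) \le Q_j\bigl((r+1-i)\epsilon_j, \min\{2\epsilon_{j+1} + (1-i)\epsilon_j, \Delta_0\}\bigr).
\]
The last inequality holds because $g_i$ lies in that interval. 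One technicality: the lower endpoint must be at least $2\epsilon_j$, which holds since $i \le r-1$. Another is the open versus closed endpoint, which is handled by taking the supremum over the closed interval.

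Summing over $i$ and maximizing over the unknown $r$ yields the claim. The main obstacle I expect is the first step: getting the upper bound on $g_0$ rigorous, particularly for $k=1$ and for the edge case where $T'_k$ coincides with the launch round of $P_{j+1}$. In that case one must instead use that $P_{j+1}$'s initial point equals $\overline{x}^{(T'_k)}$ and that its gap is at most $2\epsilon_{j+1}$ directly.
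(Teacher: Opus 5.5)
Your proposal is correct and follows essentially the same route as the paper. Like the paper, you bound the gap of $P_{n-k}$ at round $T'_k$ by $2\epsilon_{n-k+1}+\epsilon_{n-k}$, according to whether $P_{n-k}$ (re)starts in round $T_{k-1}$. You then sandwich the successive restart gaps as $(r+1-i)\epsilon_{n-k} < g_i \le \min\{2\epsilon_{n-k+1}+(1-i)\epsilon_{n-k}, \Delta_0\}$ with $r < 2\epsilon_{n-k+1}/\epsilon_{n-k}$, and bound each inter-restart interval by $K_{\mathtt{dec}}(\epsilon_{n-k}, g_i, G^{-1}(g_i))$ and hence by $Q_{n-k}$.

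The edge case you worry about is really just $T'_k = T_{k-1}$, which the paper treats as its own case. There $g_0 = f(\overline{x}^{(T_{k-1})}) - f_\star \le 2\epsilon_{n-k+1}$ holds directly, so your ``task not accomplished'' step is needed, and valid, only when $T'_k < T_{k-1}$.
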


\begin{proof}[Proof of Lemma~\ref{lemma:Tk-T'k-upper-bound}]
Consider an arbitrary fixed $1\le k\le n$. By the definition of $T_{k-1}$ in~\eqref{eq:T_k} (or by the definition in~\eqref{eq:n} and~\eqref{eq:T_0} if $k=1$), the process $P_{n-k+1}$ (re)starts in round $T_{k-1}$ at point $\overline{x}^{(T_{k-1})}$ with objective gap $f( \overline{x}^{(T_{k-1})} ) -f_\star \le 2\epsilon_{n-k+1}$. Recall the two steps in Algorithm~\ref{algo:general-framework}, $\overline{x}^{(T_{k-1})}$ is the point obtained from Step 1 of round $T_{k-1}$. Consider the following two cases in round $T_{k-1}$.
\begin{itemize}
\item If $P_{n-k}$ also (re)starts at $\overline{x}^{(T_{k-1})}$ in round $T_{k-1}$, then by the definition of $T'_k$ in~\eqref{eq:T'_k}, we have $T'_k = T_{k-1}$ and $\overline{x}^{(T'_{k})} = \overline{x}^{(T_{k-1})}$. So $P_{n-k}$ (re)starts in round $T'_k$ with objective gap $f( \overline{x}^{(T'_{k})} ) -f_\star \le 2\epsilon_{n-k+1}$.
\item If $P_{n-k}$ does not (re)start in round $T_{k-1}$, then the latest reference point of $P_{n-k}$ is $(x_{\mathtt{ref}})_{n-k}^{(T_{k-1})} = \overline{x}^{(T'_{k})}$, since $T'_k$ is the latest round prior to $T_{k-1}$ when $P_{n-k}$ (re)starts. By the restart condition, the task for $P_{n-k}$ is not accomplished in round $T_{k-1}$, so $f(\overline{x}^{(T'_{k})}) - f( \overline{x}^{(T_{k-1})} ) \le \epsilon_{n-k}$. This further implies that $P_{n-k}$ (re)starts in round $T'_k$ with objective gap $f(\overline{x}^{(T'_{k})}) - f_\star\le 2\epsilon_{n-k+1}+ \epsilon_{n-k}$.
\end{itemize}
In either case, $P_{n-k}$ (re)starts in round $T'_{k}$ with objective gap at most $2\epsilon_{n-k+1}+ \epsilon_{n-k}$. Since $P_{n-k}$ also (re)starts at round $T_k$ and $T_k \ge T'_{k}$, we consider all the (re)starts of $P_{n-k}$ between $T'_{k}$ and $T_{k}$. Suppose $P_{n-k}$ (re)starts $r+1$ times in this interval (for some $r\ge 0$), and they happen at rounds $T'_{k} = T^{(0)} < \dots < T^{(r)} = T_{k}$. 
The case $r=0$ trivially has $T'_{k} = T_k$ and satisfies the desired result, because the right-hand side of~\eqref{eq:lemma-Tk-T'k-upper-bound-eq1} is always well-defined and non-negative by~\eqref{eq:proof-lemma-Tk-T'k-upper-bound-eq5}, which will be shown later in this proof. We only consider $r\ge 1$ in the rest of this proof. For $0\le i\le r$, suppose the objective gap of $P_{n-k}$ right after the (re)start at round $T^{(i)}$ is $\delta_i$, namely $\delta_i = f(\overline{x}^{(T^{(i)})}) - f_\star$. 
By the restart condition, every time $P_{n-k}$ restarts, its objective gap decreases by at least $\epsilon_{n-k}$, which implies 
\begin{equation}
\label{eq:proof-lemma-Tk-T'k-upper-bound-eq2}
\delta_i - \delta_{i+1} \ge \epsilon_{n-k}, \quad \forall 0\le i\le r-1.
\end{equation}
We also have $\delta_0 \le 2\epsilon_{n-k+1}+ \epsilon_{n-k}$ by previous analysis of $T'_{k}$, and $\delta_r \le 2\epsilon_{n-k} < \delta_{r-1}$ by the definition of $T_k$ in~\eqref{eq:T_k}. Thus,
\begin{equation}
\label{eq:proof-lemma-Tk-T'k-upper-bound-eq3}
2\epsilon_{n-k+1}+ \epsilon_{n-k} \ge \delta_0 > \dots > \delta_{r-1} > 2\epsilon_{n-k} \ge \delta_r .
\end{equation}
So $\delta_i > 2\epsilon_{n-k}$ for any $0\le i\le r-1$. Then by~\eqref{eq:Kdec-implication}, even without intervention from other processes, $P_{n-k}$ itself will produce an iterate satisfying its own task within at most $K_{\mathtt{dec}}\left(\epsilon_{n-k}, \delta_i, \operatorname{dist}( \overline{x}^{(T^{(i)})}, X_\star) \right)$ additional rounds after round $T^{(i)}$. So the following holds for $0\le i\le r-1$.
\begin{equation}
\label{eq:proof-lemma-Tk-T'k-upper-bound-eq1}
T^{(i+1)} - T^{(i)} \le K_{\mathtt{dec}} \left( \epsilon_{n-k}, \delta_i, \operatorname{dist}( \overline{x}^{(T^{(i)})}, X_\star) \right) \le K_{\mathtt{dec}} \left( \epsilon_{n-k}, \delta_i, G^{-1}( \delta_i) \right).
\end{equation}
For each $0\le i\le r-1$, note that $\delta_i = \delta_0 + \sum_{j=0}^{i-1} (\delta_{j+1}-\delta_{j}) = \delta_{r-1} + \sum_{j=i}^{r-2} (\delta_{j}-\delta_{j+1})$, then by~\eqref{eq:proof-lemma-Tk-T'k-upper-bound-eq2} and~\eqref{eq:proof-lemma-Tk-T'k-upper-bound-eq3}, we have that
\begin{equation}
\label{eq:proof-lemma-Tk-T'k-upper-bound-eq4}
(r+1-i) \epsilon_{n-k} < \delta_i \le 2\epsilon_{n-k+1} + (1-i) \epsilon_{n-k}, \quad \forall 0\le i\le r-1 .
\end{equation}
Since each $\delta_i$ is the objective gap of some process when it (re)starts, we immediately have $\delta_i \le \Delta_0$ since the objective gap strictly decreases at each restart. So 
\begin{equation}
\label{eq:proof-lemma-Tk-T'k-upper-bound-eq5}
(r+1-i) \epsilon_{n-k} < \delta_i \le \min \{ 2\epsilon_{n-k+1} + (1-i) \epsilon_{n-k}, \Delta_0 \}, \quad \forall 0\le i\le r-1 .
\end{equation}
Recall the notion of $Q_k$ from~\eqref{eq:Q}, then~\eqref{eq:proof-lemma-Tk-T'k-upper-bound-eq1} implies
\begin{equation*}
T^{(i+1)} - T^{(i)} \le Q_{n-k}\left( (r+1-i) \epsilon_{n-k}, \min \{ 2\epsilon_{n-k+1} + (1-i) \epsilon_{n-k}, \Delta_0 \} \right), \quad \forall 0\le i\le r-1.
\end{equation*}
From~\eqref{eq:proof-lemma-Tk-T'k-upper-bound-eq4} we also have $r \epsilon_{n-k} < 2\epsilon_{n-k+1}$, so $r < \frac{2\epsilon_{n-k+1}}{\epsilon_{n-k}}$. Combining this with the previous inequality gives
\begin{align*}
T_k - T'_{k} &= \sum_{i=0}^{r-1} (T^{(i+1)} - T^{(i)}) \\
&\le \max_{1\le r < \frac{2\epsilon_{n-k+1}}{\epsilon_{n-k}} }\ \sum_{i=0}^{r-1} Q_{n-k}\left( (r+1-i) \epsilon_{n-k}, \min \{ 2\epsilon_{n-k+1} + (1-i) \epsilon_{n-k}, \Delta_0 \} \right). \qedhere
\end{align*}
\end{proof}

Applying Lemma~\ref{lemma:Tk-T'k-upper-bound} to~\eqref{eq:phase2-decompose} upper bounds $T_n-T_0$, bounding the length of Phase 2. 

\subsection{Proof of Main Results}
In this part, we provide the proofs for Theorem~\ref{thm:main} and Corollary~\ref{cor:monotone}.

\subsubsection{Proof of Theorem~\ref{thm:main}}

We now use Lemma~\ref{lemma:T0-upper-bound} and Lemma~\ref{lemma:Tk-T'k-upper-bound} to control the number of rounds in Phase 1 and Phase 2, respectively, leading to the upper bound stated in Theorem~\ref{thm:main}.

\begin{proof}[Proof of Theorem~\ref{thm:main}]
The bound on the number of processes is given by Lemma~\ref{lemma:number-of-processes}. By the definition of $T_n$ in~\eqref{eq:T_k}, the process $P_0$ (re)starts with objective gap at most $2\epsilon_0$ at round $T_n$, which produces a $(2\epsilon_0)$-optimal solution. So we only need to prove the desired upper bound for $T_n$.

By the definition of $T'_k$ in~\eqref{eq:T'_k}, we have $T'_k\le T_{k-1}$ for any $1\le k\le n$. So
\begin{equation}
\label{eq:proof-thm-main-eq1}
T_n = T_0 + \sum_{k=1}^{n} (T_{k}- T_{k-1}) \le T_0 + \sum_{k=1}^{n} (T_{k}- T'_{k}) .
\end{equation}
By Lemma~\ref{lemma:Tk-T'k-upper-bound}, the following holds for any $1\le k\le n$. 
\begin{equation*}
T_k - T'_{k} \le \max_{1\le r < \frac{2\epsilon_{n-k+1}}{\epsilon_{n-k}} }\ \sum_{i=0}^{r-1} Q_{n-k}\left( (r+1-i) \epsilon_{n-k}, \min \{ 2\epsilon_{n-k+1} + (1-i) \epsilon_{n-k}, \Delta_0 \} \right) .
\end{equation*}
Summing over $k=1,\dots,n$ and reindexing, we obtain
\begin{equation*}
\begin{split}
\sum_{k=1}^{n} (T_{k}- T'_{k}) &\le \sum_{k=1}^{n}\ \max_{1\le r < \frac{2\epsilon_{n-k+1}}{\epsilon_{n-k}} }\ \sum_{i=0}^{r-1} Q_{n-k}\left( (r+1-i) \epsilon_{n-k}, \min \{ 2\epsilon_{n-k+1} + (1-i) \epsilon_{n-k}, \Delta_0 \} \right) \\
&= \sum_{k=0}^{n-1}\ \max_{1\le r < \frac{2\epsilon_{k+1}}{\epsilon_{k}} }\ \sum_{i=0}^{r-1} Q_k\left( (r+1-i) \epsilon_{k}, \min \{ 2\epsilon_{k+1} + (1-i) \epsilon_{k}, \Delta_0 \} \right) .
\end{split}
\end{equation*}
By the definition of $n$ in~\eqref{eq:n}, we have $2\epsilon_{n-1} < \Delta_{n-1} \le \Delta_0$, since $\Delta_k$ is nonincreasing in $k$. Comparing this with the definition of $m$ in~\eqref{eq:m} gives $n-1\le m-1$. So the previous inequality can be further upper bounded as follows.
\begin{equation}
\label{eq:proof-thm-main-eq2}
\sum_{k=1}^{n} (T_{k}- T'_{k}) \le \sum_{k=0}^{m-1}\ \max_{1\le r < \frac{2\epsilon_{k+1}}{\epsilon_{k}} }\ \sum_{i=0}^{r-1} Q_k\left( (r+1-i) \epsilon_{k}, \min \{ 2\epsilon_{k+1} + (1-i) \epsilon_{k}, \Delta_0 \} \right) .
\end{equation}
It remains to upper bound the term $T_0$ in~\eqref{eq:proof-thm-main-eq1}, which suffices to prove
\begin{equation}
\label{eq:proof-thm-main-eq3}
T_0 \le \max_{N_0\le s\le m} \sum_{k= N_0-1}^{s-1} Q_k\left( \epsilon_{s-1}+ \sum_{j=k}^{s-1} \epsilon_j, \Delta_0- \sum_{j= N_0-1}^{k-1} \epsilon_j \right) .
\end{equation}
We consider the following two cases.

(i) If $\Delta_0 \le 2\epsilon_{N_0-1}$, then $\Delta_{N_0-1} = f(\overline{x}^{(0)}) - f_\star = \Delta_0 \le 2\epsilon_{N_0-1}$ since $P_{N_0-1}$ is initialized at point $\overline{x}^{(0)}$ in Algorithm~\ref{algo:general-framework}. Then we have $n\le N_0-1$ by the definition of $n$ in~\eqref{eq:n}. The upper bound in Lemma~\ref{lemma:T0-upper-bound} is a summation over an empty set, which implies $T_0 =0$. By the definition of $m$ in~\eqref{eq:m}, we also have $m\le N_0-1$ since $\Delta_0 \le 2\epsilon_{N_0-1}$. So the right-hand side of~\eqref{eq:proof-thm-main-eq3} is a maximum over an empty set, which is also $0$. This proves~\eqref{eq:proof-thm-main-eq3} since both sides are $0$.

(ii) If $\Delta_0 > 2\epsilon_{N_0-1}$, then $\Delta_k = \Delta_0 > 2\epsilon_{N_0-1} \ge 2\epsilon_k$ for any $0\le k\le N_0-1$. So $n\ge N_0$ by the definition in~\eqref{eq:n}. For any $N_0-1 \le k \le n-1$, $P_{k+1}$ appears in the scheme (since $P_n$ appears, see~\eqref{eq:n}), but is not launched at the initialization of Algorithm~\ref{algo:general-framework}. Then by the restart condition and the definition of $\Delta_k$ in~\eqref{eq:Delta_k}, every time such a new process $P_{k+1}$ is launched, the new objective gap $\Delta_{k+1}$ is smaller than $\Delta_k$ by at least $\epsilon_k$:
\begin{equation*}
\Delta_{k} - \Delta_{k+1} \ge \epsilon_{k}, \quad \forall N_0-1 \le k\le n-1 .
\end{equation*}
From~\eqref{eq:n} we have $\Delta_{n-1} > 2\epsilon_{n-1}$. For each $N_0-1 \le k\le n-1$, noting $\Delta_k = \Delta_{N_0-1} + \sum_{j= N_0-1}^{k-1} (\Delta_{j+1}-\Delta_{j}) = \Delta_{n-1} + \sum_{j=k}^{n-2} (\Delta_{j}-\Delta_{j+1})$, we obtain 
\begin{equation*}
\epsilon_{n-1}+ \sum_{j=k}^{n-1} \epsilon_j = 2\epsilon_{n-1}+ \sum_{j=k}^{n-2} \epsilon_j < \Delta_k \le \Delta_{N_0-1}- \sum_{j= N_0-1}^{k-1} \epsilon_j = \Delta_0- \sum_{j= N_0-1}^{k-1} \epsilon_j, \quad \forall N_0-1\le k\le n-1.
\end{equation*}
Recall the notion of $Q_k$ from~\eqref{eq:Q}, then by Lemma~\ref{lemma:T0-upper-bound},
\begin{equation}
\label{eq:proof-thm-main-eq4}
T_0 \le \sum_{k= N_0-1}^{n-1} K_{\mathtt{dec}} \left( \epsilon_k, \Delta_k, G^{-1}(\Delta_k) \right) \le \sum_{k= N_0-1}^{n-1} Q_k\left( \epsilon_{n-1}+ \sum_{j=k}^{n-1} \epsilon_j, \Delta_0- \sum_{j= N_0-1}^{k-1} \epsilon_j \right).
\end{equation}
As noted earlier, $N_0\le n\le m$, so~\eqref{eq:proof-thm-main-eq4} implies~\eqref{eq:proof-thm-main-eq3}.

In both cases (i) and (ii), inequality~\eqref{eq:proof-thm-main-eq3} holds. Using~\eqref{eq:proof-thm-main-eq2} and~\eqref{eq:proof-thm-main-eq3} to upper bound the two parts in~\eqref{eq:proof-thm-main-eq1} directly gives the desired result.
\end{proof}

\subsubsection{Proof of Corollary~\ref{cor:monotone}}

By leveraging the monotonicity condition~\eqref{eq:cor-monotone-eq1}, all the $Q_k$ functions in Theorem~\ref{thm:main} simplify and combine into the single summation form in Corollary~\ref{cor:monotone}.

\begin{proof}[Proof of Corollary~\ref{cor:monotone}]
For any $\delta\ge 2\epsilon_k >0$, denote $Q_k^*(\delta) := K_{\mathtt{dec}} \left( \epsilon_k, \delta, G^{-1}(\delta) \right)$. By~\eqref{eq:cor-monotone-eq1}, $Q_k^*(\delta)$ is nonincreasing in $\delta$, so the following holds for any $\delta_\text{max} \ge \delta_\text{min} \ge 2\epsilon_k >0$,
\begin{equation*}
Q_k\left( \delta_\text{min}, \delta_\text{max} \right) = Q_k^*(\delta_\text{min}) .
\end{equation*}
Therefore, Theorem~\ref{thm:main} gives
\begin{equation}
\label{eq:proof-cor-monotone-eq1}
\begin{split}
T_n \le& \max_{N_0\le s\le m} \sum_{k= N_0-1}^{s-1} Q_k\left( \epsilon_{s-1}+ \sum_{j=k}^{s-1} \epsilon_j, \Delta_0- \sum_{j= N_0-1}^{k-1} \epsilon_j \right) \\
&+ \sum_{k=0}^{m-1}\ \max_{1\le r < \frac{2\epsilon_{k+1}}{\epsilon_{k}} }\ \sum_{i=0}^{r-1} Q_k\left( (r+1-i) \epsilon_{k}, \min \{ 2\epsilon_{k+1} + (1-i) \epsilon_{k}, \Delta_0 \} \right) \\
&= \underbrace{\max_{N_0\le s\le m} \sum_{k= N_0-1}^{s-1} Q_k^* \left( \epsilon_{s-1}+ \sum_{j=k}^{s-1} \epsilon_j \right) }_{(A)} + \underbrace{\sum_{k=0}^{m-1}\ \max_{1\le r < \frac{2\epsilon_{k+1}}{\epsilon_{k}} }\ \sum_{i=0}^{r-1} Q_k^* \left( (r+1-i) \epsilon_{k} \right) }_{(B)} .
\end{split}
\end{equation}
To bound the term $(A)$, we consider two cases for $m$. If $m\ge N_0$, then we have $\epsilon_{s-1} + \sum_{j=k}^{s-1} \epsilon_j \ge 2\epsilon_{s-1} \ge 2\epsilon_k$ for any $k\le s-1$. The monotonicity and non-negativity of $Q_k^* (\delta)$ give
\begin{equation}
\label{eq:proof-cor-monotone-eq2}
\max_{N_0\le s\le m} \sum_{k=N_0-1}^{s-1} Q_k^* \left( \epsilon_{s-1}+ \sum_{j=k}^{s-1} \epsilon_j \right) \le \max_{N_0\le s\le m} \sum_{k=N_0-1}^{s-1} Q_k^* \left( 2\epsilon_{k} \right) \le \sum_{k=0}^{m-1} Q_k^* \left( 2\epsilon_{k} \right) .
\end{equation}
If $m\le N_0-1$, then the term $(A)$ is a maximum over an empty set, so $(A)=0$ and~\eqref{eq:proof-cor-monotone-eq2} still holds. In either case, we have~\eqref{eq:proof-cor-monotone-eq2} as an upper bound for $(A)$. 

For the term $(B)$, since $\sum_{i=0}^{r-1} Q_k^* \left( (r+1-i) \epsilon_{k} \right) = \sum_{i=2}^{r+1} Q_k^* \left(i \epsilon_{k} \right)$ is nondecreasing in $r$, we have
\begin{equation}
\label{eq:proof-cor-monotone-eq3}
\begin{split}
\sum_{k=0}^{m-1}\ \max_{1\le r < \frac{2\epsilon_{k+1}}{\epsilon_{k}} }\ \sum_{i=0}^{r-1} Q_k^* \left( (r+1-i) \epsilon_{k} \right) \le \sum_{k=0}^{m-1} \sum_{i=2}^{\left\lfloor \frac{2\epsilon_{k+1}}{\epsilon_{k}} \right\rfloor +1} Q_k^* \left( i \epsilon_{k} \right) \le \sum_{k=0}^{m-1} \frac{2\epsilon_{k+1}}{\epsilon_{k}} Q_k^* \left( 2 \epsilon_{k} \right) ,
\end{split}
\end{equation}
where the last step is by the monotonicity of $Q_k^*(\delta)$. Using~\eqref{eq:proof-cor-monotone-eq2} and~\eqref{eq:proof-cor-monotone-eq3} to upper bound the two parts in~\eqref{eq:proof-cor-monotone-eq1}, we obtain
\begin{equation*}
T_n\le \sum_{k=0}^{m-1} \left( 1+ \frac{2\epsilon_{k+1}}{\epsilon_{k}} \right) Q_k^* \left( 2 \epsilon_{k} \right) = \sum_{k=0}^{m-1} \left( 1+ \frac{2\epsilon_{k+1}}{\epsilon_{k}} \right) K_{\mathtt{dec}} \left( \epsilon_k, 2 \epsilon_{k}, G^{-1}(2 \epsilon_{k}) \right) . \qedhere
\end{equation*}
\end{proof} 

\section{Parameter-Free Applications under $p\ge 2$ Growth}
\label{Sect:cor-monotone}

In this section, we apply Corollary~\ref{cor:monotone} to three general problem classes studied in~\cite{Renegar2022restart}: smooth functions, smoothable nonsmooth functions, and Lipschitz functions. For each class, we consider a specific example of a first-order method $\mathtt{fom}$ (with corresponding $K_{\mathtt{dec}}$), and a wide range of polynomial growth functions $G(t)=\mu\cdot t^p$ with $p\ge 2$. In prior work~\cite[Corollary 5.2]{Renegar2022restart}, the iteration complexity obtained for each example contains an additive burn-in term that is nonasymptotically significant, and the number of processes is of order $\mathcal{O}(\log(1/\epsilon))$. Our results improve upon both limitations, achieving the same asymptotic iteration complexity as~\cite{Renegar2022restart} without the burn-in term, while requiring only $\mathcal{O}(\log\log(1/\epsilon))$ processes.

As applications of our unified theory, we show that the monotonicity condition~\eqref{eq:cor-monotone-eq1} is satisfied in each of these three examples. Direct application of Corollary~\ref{cor:monotone} gives iteration complexity for a generic sequence $\{\epsilon_k\}$. Combining this with the bound on the number of processes from Lemma~\ref{lemma:number-of-processes}, we obtain the total first-order oracle complexity. One may select the values of $\{\epsilon_k\}$ to obtain the optimized iteration and oracle complexities derivable from these upper bounds. The following propositions will be invoked repeatedly to obtain such an optimized sequence $\{\epsilon_k\}$. Note that the following results optimize over sequences $\{\epsilon_k\}$, with quantities $m$ and $\hat{m}$ depending on $\{\epsilon_k\}$ as defined in~\eqref{eq:m}.

\begin{proposition}
\label{prop:linear-rate}
For any fixed $a>0$ and $0<\epsilon< \Delta_0$ with a sufficiently large ratio $\frac{\Delta_0}{\epsilon}$, 
\begin{equation*}
\inf_{\epsilon_0\le \frac{\epsilon}{2},\, \epsilon_k \uparrow \infty} \sum_{k=0}^{m-1} \frac{\epsilon_{k+1}^a}{\epsilon_{k}^a} = \Theta\left( \log \left(\frac{\Delta_0}{\epsilon} \right) \right), \quad\text{and}\quad \inf_{\epsilon_0\le \frac{\epsilon}{2},\, \epsilon_k \uparrow \infty} (\hat{m}+1) \cdot \sum_{k=0}^{m-1} \frac{\epsilon_{k+1}^a}{\epsilon_{k}^a} = \Theta\left( \left( \log \left(\frac{\Delta_0}{\epsilon} \right) \right)^2 \right) .
\end{equation*}
In particular, setting $\epsilon_k = \frac{\epsilon}{2}\cdot c^k$ for any $c>1$ attains these two asymptotic rates, with $\hat{m}= \mathcal{O}\left( \log \left(\frac{\Delta_0}{\epsilon} \right) \right)$ for this choice.
\end{proposition}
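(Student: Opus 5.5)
Writing $L := \log(\Delta_0/\epsilon)$, I would prove each claim by matching an upper bound from an explicit geometric sequence with a lower bound valid for every admissible sequence.

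\textbf{Upper bound.} Take $\epsilon_k = \frac{\epsilon}{2}c^k$ with $c>1$ fixed. Then every ratio satisfies $\epsilon_{k+1}^a/\epsilon_k^a = c^a$, so the sum equals $m c^a$. By the definition of $m$ in~\eqref{eq:m}, $m = \lceil \log_c(\Delta_0/\epsilon)\rceil$, and likewise $\hat m = \lceil \log_c(2\Delta_0/\epsilon)\rceil$ or that plus one. For large ratio $\Delta_0/\epsilon$ both are $\mathcal{O}(L)$. This gives $\mathcal{O}(L)$ for the first quantity and $\mathcal{O}(L^2)$ for the second, and it also gives the stated bound on $\hat m$.

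\textbf{Lower bound for the sum.} Fix any sequence with $\epsilon_0\le\epsilon/2$, and set $r_k=\epsilon_{k+1}/\epsilon_k>1$. Since $\epsilon_m\ge\Delta_0/2$, we get
\[
\prod_{k=0}^{m-1} r_k \ge \frac{\Delta_0}{2\epsilon_0}\ge \frac{\Delta_0}{\epsilon}.
\]
Now use $x^a \ge 1+a\log x$ for $x\ge 1$, which follows from $e^{y}\ge 1+y$. This yields
\[
\sum_{k=0}^{m-1} r_k^a \ge m + a\sum_{k=0}^{m-1}\log r_k \ge a\log(\Delta_0/\epsilon),
\]
so the first infimum is $\Omega(L)$.

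\textbf{Lower bound for the product.} This is the main step, because a sequence with few large jumps keeps $\hat m$ small but makes the sum large. I would split into two cases at threshold $m\ge L$.

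If $m\ge L$, then $\hat m+1\ge m\ge L$ (since $\hat m\ge m$), and with the previous bound the product is at least $a L^2$.

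If $m< L$, I use AM--GM together with the product bound:
\[
\sum_{k=0}^{m-1} r_k^a \ge m\Big(\frac{\Delta_0}{\epsilon}\Big)^{a/m} = m\,e^{aL/m}.
\]
The function $m\mapsto m e^{aL/m}$ is decreasing for $m<aL$. For $m<L$ it is therefore at least $\min(L,aL)\,e^{\min(a,1)}$ up to constants, which is only $\Omega(L)$ and not enough. I would refine this using $\hat m+1\ge m+1$. It suffices to show $(m+1)\,m\,e^{aL/m}\gtrsim L^2$ for $1\le m\le L$. Put $m=\theta L$ with $\theta\in(0,1]$; the expression becomes $\theta^2 L^2 e^{a/\theta}$ (and for $m$ small the term $e^{aL/m}$ is enormous). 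Since $\theta^2 e^{a/\theta}$ is bounded below by a positive constant depending only on $a$ (its minimum is $(a/2)^2e^2$ if $a\le 2$, and otherwise its value at $\theta=1$), the product is $\Omega(L^2)$. If $m=0$, then $\epsilon_0\ge\Delta_0/2>\epsilon/2$, contradicting the constraint once $\Delta_0>\epsilon$. Combining the two cases gives $\Theta(L^2)$.

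The hardest part is this balancing in the second lower bound. The key is to let AM--GM take over exactly when $m$ is small relative to $L$, and to use the large-ratio assumption to absorb additive constants and ceilings.
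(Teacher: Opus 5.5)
Your proof is correct. The upper bound via $\epsilon_k=\frac{\epsilon}{2}c^k$ is identical to the paper's, and your product lower bound follows the paper's core idea. The paper also applies AM--GM to get $\sum_k r_k^a\ge m(\Delta_0/\epsilon)^{a/m}$ and then minimizes $m^2(\Delta_0/\epsilon)^{a/m}$, which is your $\theta^2L^2e^{a/\theta}$ with minimizer $m=aL/2$ (i.e., $\theta=a/2$).

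There are two small differences. First, the paper carries out this minimization over all real $y>0$ through $F_2(y)=2\log y+\frac{a}{y}L$. This gives $m\sum_k r_k^a\ge \frac{e^2a^2}{4}L^2$ uniformly in $m$, and the paper then uses $m\le\hat m$. So your case split at $m\ge L$ is unnecessary: $\theta^2e^{a/\theta}\ge (a/2)^2e^2$ holds for every $\theta>0$, including $\theta\ge 1$.

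Second, for the first lower bound you use $r^a\ge 1+a\log r$, whereas the paper uses AM--GM plus minimizing $\log y+\frac{a}{y}L$. Your route is slightly more elementary and gives the constant $a$ instead of $ea$, which does not matter for the $\Theta$ statement.
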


\begin{proposition}
\label{prop:sublinear-rate}
For any fixed $0<a<b$, and $0<\epsilon< \Delta_0$ with a sufficiently large ratio $\frac{\Delta_0}{\epsilon}$,
\begin{equation*}
\inf_{\epsilon_0\le \frac{\epsilon}{2},\, \epsilon_k \uparrow \infty} \sum_{k=0}^{m-1} \frac{\epsilon_{k+1}^a}{\epsilon_{k}^b} = \Theta\left( \frac{1}{\epsilon^{b-a}} \right) , \quad\text{and}\quad \inf_{\epsilon_0\le \frac{\epsilon}{2},\, \epsilon_k \uparrow \infty} (\hat{m}+1) \cdot \sum_{k=0}^{m-1} \frac{\epsilon_{k+1}^a}{\epsilon_{k}^b} = \Theta\left( \frac{1}{\epsilon^{b-a}} \cdot\log\log \left(\frac{\Delta_0}{\epsilon} \right) \right) .
\end{equation*}
In particular, setting $\epsilon_k = \frac{\epsilon}{2e}\cdot \exp(c^k)$ for any $c\in (1, \frac{b}{a})$ attains these two asymptotic rates, with $\hat{m}= \mathcal{O}\left( \log\log \left(\frac{\Delta_0}{\epsilon} \right) \right)$ for this choice.
\end{proposition}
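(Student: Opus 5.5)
We prove the two $\Theta$ statements by matching an explicit upper construction with a lower bound valid for every admissible sequence. Throughout we write $S:=\sum_{k=0}^{m-1}\epsilon_{k+1}^a/\epsilon_k^{b}$ and use the rewriting
\[
\frac{\epsilon_{k+1}^a}{\epsilon_k^{b}}=\epsilon_k^{a-b}\Bigl(\frac{\epsilon_{k+1}}{\epsilon_k}\Bigr)^{a},
\]
where $a-b<0$.

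\emph{Upper bound.} Take $\epsilon_k=\frac{\epsilon}{2e}\exp(c^k)$ with $c\in(1,b/a)$. Then $\epsilon_0=\epsilon/2$, and the sequence is strictly increasing to infinity. Each term equals
\[
\Bigl(\frac{\epsilon}{2e}\Bigr)^{a-b}\exp\bigl(ac^{k+1}-bc^k\bigr)=\Bigl(\frac{\epsilon}{2e}\Bigr)^{a-b}\exp\bigl(-(b-ac)c^k\bigr).
\]
Since $b-ac>0$ and $c^k\ge 1+k(c-1)$, the series $\sum_{k\ge0}\exp(-(b-ac)c^k)$ is dominated by a geometric series and converges to a constant depending only on $a,b,c$. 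This gives $S=\mathcal{O}(\epsilon^{-(b-a)})$ uniformly in $m$. Next, $\hat m$ is the first $k$ with $\exp(c^k)>2e\Delta_0/\epsilon$, so $\hat m\le 1+\log\log(2e\Delta_0/\epsilon)/\log c=\mathcal{O}(\log\log(\Delta_0/\epsilon))$. Multiplying the two bounds gives the claimed upper rate for the product.

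\emph{First lower bound.} Because $\Delta_0/\epsilon$ is large and $\epsilon_0\le\epsilon/2<\Delta_0/2$, we have $m\ge1$, so the $k=0$ term is present. Using $\epsilon_1>\epsilon_0$ and $a-b<0$,
\[
S\ge \epsilon_1^a\epsilon_0^{-b}\ge \epsilon_0^{a-b}\ge (\epsilon/2)^{a-b}=\Omega(\epsilon^{-(b-a)}).
\]

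\emph{Second lower bound (main obstacle).} The difficulty is the trade-off: a sequence that reaches $\Delta_0$ in few steps must take large ratios, which forces $S$ up. Fix a sequence and write $S=M\epsilon^{a-b}$ with $M\ge M_0>0$ by the first bound. Every term is at most $S$, so for $k\le m-1$,
\[
\Bigl(\frac{\epsilon_{k+1}}{\epsilon_k}\Bigr)^a\le M\Bigl(\frac{\epsilon_k}{\epsilon}\Bigr)^{b-a}.
\]
Set $u_k:=\log(\epsilon_k/\epsilon)+C$ for a constant $C$ chosen so that $u_0\ge1$ (note $\epsilon_k\ge\epsilon_0$, and taking $\epsilon_0$ far below $\epsilon/2$ only increases $S$, so we may assume $\epsilon_0\ge\epsilon/2\cdot(\text{const})$, or absorb this into $M$ via $S\ge\epsilon_0^{a-b}$). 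Taking logarithms gives the linear recursion
\[
u_{k+1}\le \tfrac{b}{a}u_k+\tfrac1a\log M+C'.
\]
Iterating yields $u_m\le (b/a)^m\bigl(u_0+\mathcal{O}(1+\log M)\bigr)=(b/a)^m\,\mathcal{O}(1+\log M)$. By the definition of $m$ we have $\epsilon_m\ge\Delta_0/2$, so $u_m\ge\log(\Delta_0/\epsilon)-\mathcal{O}(1)$. Hence
\[
m\ \ge\ \frac{\log\log(\Delta_0/\epsilon)-\log\bigl(\mathcal{O}(1+\log M)\bigr)}{\log(b/a)}.
\]
Now split into two cases.
\begin{itemize}
\item If $M\le(\log(\Delta_0/\epsilon))^{1/2}$, then $\log(1+\log M)\le\tfrac12\log\log(\Delta_0/\epsilon)+\mathcal{O}(1)$. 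Therefore $m\gtrsim\log\log(\Delta_0/\epsilon)$, and since $\hat m\ge m$, we get $(\hat m+1)S\ge (m+1)M_0\epsilon^{a-b}=\Omega(\epsilon^{a-b}\log\log(\Delta_0/\epsilon))$.
\item Otherwise $M>(\log(\Delta_0/\epsilon))^{1/2}$, which already dominates $\log\log(\Delta_0/\epsilon)$ for large ratios. Since $\hat m+1\ge1$, the product is again $\Omega(\epsilon^{a-b}\log\log(\Delta_0/\epsilon))$.
\end{itemize}
The care needed in this argument lies in the constants in the recursion for $u_k$ and in handling very small $\epsilon_0$, both of which are absorbed as indicated.
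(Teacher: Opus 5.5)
Your proof is correct, and it reaches the second lower bound by a genuinely different route from the paper. Your upper-bound construction and your first lower bound ($S\ge\epsilon_0^{a-b}\ge(\epsilon/2)^{a-b}$) match the paper's.

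\textbf{The paper's route.} It solves the fixed-$m$ problem exactly. In Lemma~\ref{lemma:fixed-m}, weighted AM--GM with weights $t_k=a^kb^{m-1-k}$ telescopes $\prod_k y_k^{t_k}$ to $\epsilon_0^{-b^m}\epsilon_m^{a^m}$. This gives $v_m^\star\ge c^{\frac{1}{c-1}-\frac{m}{c^m-1}}(2/\epsilon)^{b-a}(\Delta_0/\epsilon)^{\frac{b-a}{c^m-1}}$ with $c=b/a$. The paper then minimizes $m\,v_m^\star$ over the continuous variable $y=c^m$ via the three-case Lemma~\ref{lemma:loglog-bound}.

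\textbf{Your route.} You use only that each summand is at most $S=M\epsilon^{a-b}$. This becomes the log-linear recursion $\ell_{k+1}\le\frac{b}{a}\ell_k+\frac{1}{a}\log M$ for $\ell_k=\log(\epsilon_k/\epsilon)$. It forces $(b/a)^m\gtrsim\log(\Delta_0/\epsilon)/\log M$, and you then trade $m$ against $M$ with a two-case split at $M=\sqrt{\log(\Delta_0/\epsilon)}$.

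\textbf{What each buys.} Your argument is shorter and more elementary. It also shows transparently why doubly exponential growth is forced: a bounded sum caps the per-step multiplicative growth of $\log\epsilon_k$ at $b/a$. The paper's approach gives things yours leaves implicit:
\begin{itemize}
\item an explicit constant $\frac{2^{b-a-1}}{\log(b/a)}$;
\item an explicit meaning of ``sufficiently large,'' namely $\log(\Delta_0/\epsilon)>\max\{e^2/(b-a)^2,1/(b-a)\}$;
\item the exact fixed-$m$ optimizer.
\end{itemize}
Your version is asymptotic with implicit thresholds, but that suffices for the $\Theta$ statement as written.

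\textbf{A simplification.} Your bookkeeping with the shift $C$, the requirement $u_0\ge 1$, and the worry about small $\epsilon_0$ is unnecessary. Since $\ell_0\le-\log 2<0$ and $M\ge 2^{b-a}>1$, iterating directly gives $\ell_m<\frac{\log M}{a}\cdot\frac{(b/a)^m-1}{b/a-1}$. A very small $\epsilon_0$ only makes this bound stronger.
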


The proofs of these two propositions are deferred to Appendix~\ref{Sect:technical-propositions}.

\subsection{Application to the Smooth Convex Setting}

For the class of $L$-smooth convex functions, we may consider accelerated methods, including the classic one from Nesterov~\cite{Nesterov1983} and the more recent and widely used FISTA~\cite{Beck2009fista}. In the rest of the paper, the ``accelerated method'' refers to the method defined below in~\eqref{eq:accel}, initialized at $x^{(0)}$.
\begin{equation}
\label{eq:accel}
\begin{split}
&\text{Initialize: } \theta^{(0)} = 1, y^{(0)}= x^{(0)}. \\
&\text{Iterate:} \begin{cases}
x^{(k+1)} = y^{(k)}- \frac{1}{L} \nabla f(y^{(k)}) ,\\
\theta^{(k+1)} = \frac{1+ \sqrt{ 1+4(\theta^{(k)})^2}}{2} ,\\
y^{(k+1)} = x^{(k+1)} + \frac{ \theta^{(k)}-1}{ \theta^{(k+1)}} (x^{(k+1)}- x^{(k)}) .
\end{cases}
\end{split}
\end{equation}
Note that the accelerated method does not depend on $\epsilon$. When applying this in Algorithm~\ref{algo:general-framework}, we simply let all $\mathtt{fom}(\epsilon_k)$ be the accelerated method, so all processes run the same method. The only differences among the processes are (i) their decrement targets $\epsilon_k$, and, as a consequence, (ii) their iterates in the same round may differ due to different restarting conditions.

For the accelerated method, we have the classic result $f(x^{(n)})-f_\star \le \frac{2L \operatorname{dist}(x^{(0)}, X_\star)^2}{(n+1)^2}$, which implies that Assumption~\ref{assumption:decrement-for-fom} holds with $K_{\mathtt{dec}}(\epsilon, \delta, D) = \sqrt{\frac{2LD^2}{\delta-\epsilon}}$. By substituting this explicit $K_{\mathtt{dec}}$ into the preceding Corollary~\ref{cor:monotone}, we obtain the following result. Consider $G(t)=\mu\cdot t^p$ for $p\ge 2$. The case of $p=2$ occurs when, for example, whenever $f$ is strongly convex. 

\begin{corollary}
\label{cor:accel}
Consider any $0<\epsilon< \Delta_0$. Suppose $f$ is $L$-smooth and convex, $\mathtt{fom}(\epsilon)$ is the accelerated method, Assumption~\ref{assumption:function-growth} holds for $f$ with $G(t) = \mu\cdot t^p$ for some $\mu>0$ and $p\ge 2$. Set $N_0=1$. 
(i) For $p>2$, set $\epsilon_k = \frac{\epsilon}{2e}\cdot \exp(c^k)$ for any $c\in (1, \frac{3}{2}-\frac{1}{p})$, then Algorithm~\ref{algo:general-framework} computes an $\epsilon$-optimal solution with iteration complexity and oracle complexity respectively at most 
\begin{equation}
\label{eq:cor-accel-eq1}
\mathcal{O}\left( \frac{\sqrt{L}}{ \mu^{\frac{1}{p}}} \cdot \frac{1}{\epsilon^{\frac{1}{2}-\frac{1}{p}}} \right) \quad\text{and}\quad \mathcal{O}\left( \frac{\sqrt{L}}{ \mu^{\frac{1}{p}}} \cdot \frac{1}{\epsilon^{\frac{1}{2}-\frac{1}{p}}} \cdot \log\log\left( \frac{\Delta_0}{\epsilon} \right) \right) .
\end{equation}
(ii) For $p=2$, set $\epsilon_k = \frac{\epsilon}{2}\cdot c^k$ for any $c>1$, then Algorithm~\ref{algo:general-framework} computes an $\epsilon$-optimal solution with iteration complexity and oracle complexity respectively at most 
\begin{equation*}
\mathcal{O}\left( \sqrt{\frac{L}{\mu}} \cdot \log\left( \frac{\Delta_0}{\epsilon} \right) \right) \quad\text{and}\quad \mathcal{O}\left( \sqrt{\frac{L}{\mu}} \cdot \left( \log\left( \frac{\Delta_0}{\epsilon} \right)\right)^2 \right) .
\end{equation*}
In either case, no other choice of $\{\epsilon_k\}$ can asymptotically improve these rates in~\eqref{eq:cor-monotone-eq2}, even the $\log$ and $\log\log$ factors.
\end{corollary}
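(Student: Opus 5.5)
We apply Corollary~\ref{cor:monotone} with the explicit decrement function $K_{\mathtt{dec}}(\epsilon,\delta,D)=\sqrt{2LD^2/(\delta-\epsilon)}$, and then optimize over $\{\epsilon_k\}$ using Propositions~\ref{prop:linear-rate} and~\ref{prop:sublinear-rate}.

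\textbf{Monotonicity condition.} With $G^{-1}(\delta)=(\delta/\mu)^{1/p}$ we get
\[
K_{\mathtt{dec}}(\epsilon,\delta,G^{-1}(\delta))=\sqrt{2L}\,\mu^{-1/p}\,\delta^{1/p}(\delta-\epsilon)^{-1/2}.
\]
The logarithmic derivative in $\delta$ is $\frac{1}{p\delta}-\frac{1}{2(\delta-\epsilon)}$. Since $p\ge 2$ and $\delta-\epsilon<\delta$, we have $\frac{1}{p\delta}\le\frac{1}{2\delta}<\frac{1}{2(\delta-\epsilon)}$, so the derivative is negative on $\delta\ge 2\epsilon$. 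Hence~\eqref{eq:cor-monotone-eq1} holds.

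\textbf{Bound from the corollary.} Evaluating at $\delta=2\epsilon_k$ gives
\[
K_{\mathtt{dec}}(\epsilon_k,2\epsilon_k,G^{-1}(2\epsilon_k))=2^{\frac12+\frac1p}\sqrt{L}\,\mu^{-1/p}\,\epsilon_k^{\frac1p-\frac12}.
\]
The round bound~\eqref{eq:cor-monotone-eq2} is therefore at most a constant times
\[
\frac{\sqrt L}{\mu^{1/p}}\sum_{k=0}^{m-1}\Bigl(\epsilon_k^{\frac1p-\frac12}+2\,\epsilon_{k+1}\,\epsilon_k^{\frac1p-\frac32}\Bigr).
\]
Since $\epsilon_{k+1}>\epsilon_k$, the second summand dominates the first. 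So up to constants the bound is $\sum_{k<m}\epsilon_{k+1}^a/\epsilon_k^b$ with $a=1$ and $b=\frac32-\frac1p$. The oracle complexity is this count multiplied by the process bound $\hat m+1$ from Lemma~\ref{lemma:number-of-processes} (here $N_0=1$). Since $\epsilon_0=\epsilon/2$, the output is $2\epsilon_0=\epsilon$-optimal.

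\textbf{Case (i), $p>2$.} Here $b-a=\frac12-\frac1p>0$. Proposition~\ref{prop:sublinear-rate} with $c\in(1,b/a)=(1,\frac32-\frac1p)$ gives $\sum_k\epsilon_{k+1}/\epsilon_k^b=O(\epsilon^{-(1/2-1/p)})$ and $\hat m=O(\log\log(\Delta_0/\epsilon))$. This yields~\eqref{eq:cor-accel-eq1}.

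\textbf{Case (ii), $p=2$.} Here $a=b=1$. Proposition~\ref{prop:linear-rate} with $\epsilon_k=\frac{\epsilon}{2}c^k$ gives a sum of $O(\log(\Delta_0/\epsilon))$ and oracle complexity $O(\log^2(\Delta_0/\epsilon))$, each times $\sqrt{L/\mu}$.

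\textbf{Optimality claim.} This is also about the expression~\eqref{eq:cor-monotone-eq2}. That expression is bounded below by a constant times $\frac{\sqrt L}{\mu^{1/p}}\sum_k\epsilon_{k+1}\epsilon_k^{-b}$, with constants independent of the sequence. The $\Theta$ lower bounds in both propositions are infima over all admissible sequences, so no other choice of $\{\epsilon_k\}$ can improve the rates. For $\epsilon_0<\epsilon/2$ the same lower bounds apply, since the propositions take the infimum over $\epsilon_0\le\epsilon/2$.

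\textbf{Main obstacle.} The delicate step is keeping the reduction two-sided. Upper and lower bounds must match up to constants independent of $\{\epsilon_k\}$, which the domination $\epsilon_k^{b-1}\cdot\epsilon_k^{-b}\le\epsilon_{k+1}\epsilon_k^{-b}$ ensures. One must also check that the "sufficiently large $\Delta_0/\epsilon$" hypothesis of the propositions is compatible with the $O(\cdot)$ statements as $\epsilon\to0$.
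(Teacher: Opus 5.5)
Your proposal is correct and follows essentially the same route as the paper. You verify the monotonicity condition~\eqref{eq:cor-monotone-eq1}, apply Corollary~\ref{cor:monotone}, absorb $1+2\epsilon_{k+1}/\epsilon_k$ into $3\epsilon_{k+1}/\epsilon_k$, multiply by $\hat m+1$, and invoke Propositions~\ref{prop:sublinear-rate} and~\ref{prop:linear-rate} with $(a,b)=(1,\tfrac32-\tfrac1p)$. The only differences are cosmetic: you check monotonicity with the logarithmic derivative instead of the quotient rule, and you spell out the two-sided comparison behind the optimality claim, which the paper leaves implicit.
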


\begin{proof}[Proof of Corollary~\ref{cor:accel}]
By Proposition~\ref{prop:accel-method}, Assumption~\ref{assumption:decrement-for-fom} holds with $K_{\mathtt{dec}}(\epsilon, \delta, D) = \sqrt{\frac{2LD^2}{\delta-\epsilon}}$. Then
\begin{equation*}
K_{\mathtt{dec}}(\epsilon, \delta, G^{-1}(\delta)) = \sqrt{ \frac{2L}{\delta-\epsilon} \cdot \left( \frac{\delta}{\mu} \right)^{\frac{2}{p}} } = \frac{\sqrt{2L}}{ \mu^{\frac{1}{p}}} \cdot \sqrt{\frac{\delta^{\frac{2}{p}} }{ \delta-\epsilon} } .
\end{equation*}
For any $0< r\le 1$, and any $\delta >\epsilon >0$, observe the partial derivative formula below
\begin{equation*}
\frac{\partial}{\partial \delta} \left( \frac{\delta^r}{ \delta-\epsilon} \right) = \frac{r\delta^{r-1} (\delta-\epsilon) - \delta^r}{ (\delta-\epsilon)^2} = \frac{\delta^{r-1}}{(\delta-\epsilon)^2} \left( (r-1)\delta - r\epsilon \right) \le 0 .
\end{equation*}
So $K_{\mathtt{dec}}(\epsilon, \delta, G^{-1}(\delta))$ is nonincreasing in $\delta$ for any $\delta>\epsilon>0$ and $p\ge 2$, which implies~\eqref{eq:cor-monotone-eq1}. By Corollary~\ref{cor:monotone}, if $\epsilon_0 \le \frac{\epsilon}{2}$, then Algorithm~\ref{algo:general-framework} computes an $\epsilon$-optimal solution with iteration complexity at most
\begin{equation}
\label{eq:proof-cor-accel-eq1}
T_n\le \sum_{k=0}^{m-1} \left( 1+ \frac{2\epsilon_{k+1}}{\epsilon_{k}} \right) K_{\mathtt{dec}} \left( \epsilon_k, 2 \epsilon_{k}, G^{-1}(2 \epsilon_{k}) \right) = \sum_{k=0}^{m-1} \left( 1+ \frac{2\epsilon_{k+1}}{\epsilon_{k}} \right) \frac{\sqrt{2L}}{ \mu^{\frac{1}{p}}} \cdot \sqrt{\frac{ (2\epsilon_k)^{\frac{2}{p}} }{ \epsilon_k} } .
\end{equation}
By Lemma~\ref{lemma:number-of-processes}, the number of processes in the scheme is at most $\hat{m}+1$ since $N_0=1$. So the total oracle complexity is at most $(\hat{m}+1) T_n$.

Since $\epsilon_k < \epsilon_{k+1}$, we have $1+ \frac{2\epsilon_{k+1}}{\epsilon_{k}} \le \frac{3\epsilon_{k+1}}{\epsilon_{k}}$. Then~\eqref{eq:proof-cor-accel-eq1} implies 
\begin{equation}
\label{eq:proof-cor-accel-eq2}
T_n = \mathcal{O}\left( \frac{\sqrt{L}}{ \mu^{\frac{1}{p}}} \sum_{k=0}^{m-1} \frac{\epsilon_{k+1}}{\epsilon_{k}^{\frac{3}{2}-\frac{1}{p}} } \right) \quad\text{and}\quad (\hat{m}+1) T_n= \mathcal{O}\left( (\hat{m}+1) \cdot \frac{\sqrt{L}}{ \mu^{\frac{1}{p}}} \sum_{k=0}^{m-1} \frac{\epsilon_{k+1}}{\epsilon_{k}^{\frac{3}{2}-\frac{1}{p}} } \right) .
\end{equation}
For $p>2$, by Proposition~\ref{prop:sublinear-rate}, if one sets $\epsilon_k = \frac{\epsilon}{2e}\cdot \exp(c^k)$ for any $c\in (1, \frac{3}{2}-\frac{1}{p})$, then the iteration complexity and oracle complexity are respectively at most
\begin{equation}
\label{eq:proof-cor-accel-eq3}
T_n= \mathcal{O}\left( \frac{\sqrt{L}}{ \mu^{\frac{1}{p}}} \cdot \frac{1}{\epsilon^{\frac{1}{2}-\frac{1}{p}}} \right) \quad\text{and}\quad (\hat{m}+1) T_n= \mathcal{O}\left( \frac{\sqrt{L}}{ \mu^{\frac{1}{p}}} \cdot \frac{1}{\epsilon^{\frac{1}{2}-\frac{1}{p}}} \cdot \log\log\left( \frac{\Delta_0}{\epsilon} \right) \right) .
\end{equation}
Proposition~\ref{prop:sublinear-rate} also guarantees that no other choice of $\{\epsilon_k\}$ can asymptotically improve the two rates in~\eqref{eq:proof-cor-accel-eq2} beyond~\eqref{eq:proof-cor-accel-eq3} if $p>2$.

We have similar results for $p=2$. For $p=2$, recall the complexity bounds in~\eqref{eq:proof-cor-accel-eq2}. If one sets $\epsilon_k = \frac{\epsilon}{2}\cdot c^k$ for any $c>1$, then by Proposition~\ref{prop:linear-rate}, the iteration complexity and oracle complexity are respectively at most
\begin{equation}
\label{eq:proof-cor-accel-eq4}
T_n = \mathcal{O}\left( \sqrt{\frac{L}{\mu}} \cdot \log\left( \frac{\Delta_0}{\epsilon} \right) \right) \quad\text{and}\quad (\hat{m}+1) T_n= \mathcal{O}\left( \sqrt{\frac{L}{\mu}} \cdot \left( \log\left( \frac{\Delta_0}{\epsilon} \right)\right)^2 \right) ,
\end{equation}
and no other choice of $\{\epsilon_k\}$ can asymptotically improve the two rates in~\eqref{eq:proof-cor-accel-eq2} beyond~\eqref{eq:proof-cor-accel-eq4} in this case.
\end{proof}

For $p=2$, the linear rate $\mathcal{O}\left( \sqrt{\frac{L}{\mu}} \cdot \log( \frac{\Delta_0}{\epsilon} ) \right)$ matches the well-known lower bound for strongly convex and smooth functions~\cite{NemirovskiYudin1983}. For $p>2$, the sublinear rate $\mathcal{O}\left( \frac{\sqrt{L}}{ \mu^{\frac{1}{p}}} \cdot \frac{1}{\epsilon^{\frac{1}{2}-\frac{1}{p}}} \right)$ also matches the lower bound stated by~\cite{nemirovskii1985}. For the total first-order oracle complexity, the parallel-process setting leads to an extra factor of $\mathcal{O}\left(\log\log(\frac{\Delta_0}{\epsilon})\right)$ for the sublinear rate case and $\mathcal{O}\left(\log(\frac{\Delta_0}{\epsilon})\right)$ for the linear rate case.

As discussed earlier, our Corollary above is also comparable with the restarting scheme in the prior work under the same $L$-smooth problem and the accelerated method, see~\cite[Corollary 5.2]{Renegar2022restart}, where they required the following iteration complexity and oracle complexity for the case $p>2$
\begin{equation*}
\mathcal{O}\left( \sqrt{L} \operatorname{dist}(x^{(0)}, X_\star) + \frac{\sqrt{L}}{ \mu^{\frac{1}{p}}} \cdot \frac{1}{\epsilon^{\frac{1}{2}-\frac{1}{p}}} \right) \quad\text{and}\quad \mathcal{O}\left( \left(\sqrt{L} \operatorname{dist}(x^{(0)}, X_\star) + \frac{\sqrt{L}}{ \mu^{\frac{1}{p}}} \cdot \frac{1}{\epsilon^{\frac{1}{2}-\frac{1}{p}}} \right) \cdot \log\left( \frac{1}{\epsilon} \right) \right).
\end{equation*}
Compared with~\eqref{eq:cor-accel-eq1}, our results maintain the same asymptotic term $\mathcal{O}\left( \frac{\sqrt{L}}{ \mu^{\frac{1}{p}}} \cdot \frac{1}{\epsilon^{\frac{1}{2}-\frac{1}{p}}} \right)$, while we remove the burn-in term $\mathcal{O}\left( \sqrt{L} \operatorname{dist}(x^{(0)}, X_\star) \right)$ and reduce the required number of processes to $\mathcal{O}(\log\log(1/\epsilon))$. Similar improvements are also achieved in the case $p=2$.

\begin{remark}
The result of Corollary~\ref{cor:accel} can be extended by taking $\mathtt{fom}$ to be FISTA~\cite{Beck2009fista}. FISTA applies to a more general composite form $\min_x f(x) = f_1(x)+f_2(x)$ where $f_1$ is $L$-smooth and convex, while $f_2$ is convex with a computable proximal operator. Its update differs from~\eqref{eq:accel} only in replacing the gradient step by $x^{(k+1)} = \operatorname{prox}_{\frac{f_2}{L}}\left( y^{(k)}- \frac{1}{L} \nabla f_1(y^{(k)}) \right)$, which reduces to a gradient descent step if $f_2=0$. If we assume more generally that $f$ has such a composite form, then the proof approach and results of Corollary~\ref{cor:accel} remain valid.
\end{remark}

\subsection{Application to the Smoothable Convex Setting}
\label{Subsec:smooth-monotone}

In nonsmooth optimization, a common technique is to approximate the nonsmooth objective function $f$ with a smooth function in order to apply efficient first-order methods. Specifically, given a possibly nonsmooth $f$, one can construct a smooth function $f_\eta$ that is uniformly close to $f$ on a domain of interest. Accelerated methods can then be applied to $f_\eta$, yielding improved convergence guarantees that rely on smoothness. By the uniform approximation between $f$ and $f_\eta$, a near-optimal solution of $f_\eta$ can be shown to be near-optimal for $f$, allowing the guarantees for the smoothed problem to transfer back to the original nonsmooth problem.

As an application of the general result in Corollary~\ref{cor:monotone}, we consider the following specific smoothable setting. Define an ``$(\alpha,\beta)$-convex smoothing family of $f$'' as a collection of functions $f_\eta$ parameterized by $\eta>0$, such that each $f_\eta$ is smooth, convex, and satisfies the following for any $x,y$,
\begin{equation*}
\|\nabla f_\eta(x) - \nabla f_\eta(y)\|_2 \le \frac{\alpha}{\eta} \|x-y\|_2, \quad\text{and}\quad f(x)\le f_\eta(x) \le f(x)+ \beta\eta .
\end{equation*}
Given an $(\alpha,\beta)$-convex smoothing family of $f$ with $\alpha,\beta>0$, we define the ``smoothing method'' as follows. Let $\mathtt{fom}(\epsilon)$ be the accelerated method applied to $f_\eta$ with $\eta = \frac{\epsilon}{4\beta}$. For this smoothing method, we have $K_{\mathtt{dec}}(\epsilon, \delta, D) = \sqrt{\frac{16\alpha\beta D^2}{\epsilon(\delta-\frac{3}{2}\epsilon)}}$, see Proposition~\ref{prop:smooth-method}. In this example, the monotonicity condition~\eqref{eq:cor-monotone-eq1} holds for $p\ge 2$, and Corollary~\ref{cor:monotone} gives the following result. 

\begin{corollary}
\label{cor:smooth-monotone}
Consider any $0<\epsilon< \Delta_0$. Suppose an $(\alpha,\beta)$-convex smoothing family of $f$ is known, $\mathtt{fom}(\epsilon)$ is the smoothing method, Assumption~\ref{assumption:function-growth} holds for $f$ with $G(t) = \mu\cdot t^p$ for some $\mu>0$ and $p\ge 2$. Set $N_0=1$ and $\epsilon_k = \frac{\epsilon}{2e}\cdot \exp(c^k)$ for any $c\in (1, 2-\frac{1}{p})$, then Algorithm~\ref{algo:general-framework} computes an $\epsilon$-optimal solution with iteration complexity and oracle complexity respectively at most 
\begin{equation*}
\mathcal{O}\left( \frac{ \sqrt{\alpha\beta}}{ \mu^{\frac{1}{p}}} \cdot \frac{1}{\epsilon^{1-\frac{1}{p}}} \right) \quad\text{and}\quad \mathcal{O}\left( \frac{ \sqrt{\alpha\beta}}{ \mu^{\frac{1}{p}}}\cdot \frac{1}{\epsilon^{1-\frac{1}{p}}} \cdot \log\log\left( \frac{\Delta_0}{\epsilon} \right) \right) .
\end{equation*}
No other choice of $\{\epsilon_k\}$ can asymptotically improve these rates in~\eqref{eq:cor-monotone-eq2}, even the $\log\log$ factor.
\end{corollary}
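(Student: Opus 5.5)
The argument follows the proof of Corollary~\ref{cor:accel}, with the accelerated-method bound replaced by the smoothing bound of Proposition~\ref{prop:smooth-method}, namely $K_{\mathtt{dec}}(\epsilon,\delta,D)=\sqrt{16\alpha\beta D^2/(\epsilon(\delta-\tfrac32\epsilon))}$. Substituting $D=G^{-1}(\delta)=(\delta/\mu)^{1/p}$ gives
\begin{equation*}
K_{\mathtt{dec}}(\epsilon,\delta,G^{-1}(\delta))=\frac{4\sqrt{\alpha\beta}}{\mu^{1/p}}\cdot\frac{1}{\sqrt{\epsilon}}\sqrt{\frac{\delta^{2/p}}{\delta-\frac32\epsilon}} .
\end{equation*}
The first step is to check the monotonicity condition~\eqref{eq:cor-monotone-eq1}. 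With $r=2/p\in(0,1]$ and $c=\tfrac32\epsilon$, the same derivative computation as in Corollary~\ref{cor:accel} gives
\begin{equation*}
\partial_\delta\left(\frac{\delta^r}{\delta-c}\right)=\frac{\delta^{r-1}\left((r-1)\delta-rc\right)}{(\delta-c)^2}\le 0 \quad\text{for } \delta>c .
\end{equation*}
Since $2\epsilon>\tfrac32\epsilon$, the function is nonincreasing on $[2\epsilon,\infty)$, so~\eqref{eq:cor-monotone-eq1} holds.

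Next, I apply Corollary~\ref{cor:monotone} with $\epsilon_0\le\epsilon/2$. This holds for the stated sequence, since $\epsilon_0=\frac{\epsilon}{2e}\cdot e=\frac{\epsilon}{2}$, and it makes a $(2\epsilon_0)$-optimal point $\epsilon$-optimal. Evaluating at $\delta=2\epsilon_k$ gives $K_{\mathtt{dec}}=\frac{4\sqrt{\alpha\beta}}{\mu^{1/p}}\cdot\frac{(2\epsilon_k)^{1/p}}{\sqrt{\epsilon_k}\sqrt{\epsilon_k/2}}=\mathcal{O}\big(\frac{\sqrt{\alpha\beta}}{\mu^{1/p}}\epsilon_k^{\frac1p-1}\big)$. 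Using $1+2\epsilon_{k+1}/\epsilon_k\le 3\epsilon_{k+1}/\epsilon_k$, the round bound becomes
\begin{equation*}
T_n=\mathcal{O}\left(\frac{\sqrt{\alpha\beta}}{\mu^{1/p}}\sum_{k=0}^{m-1}\frac{\epsilon_{k+1}}{\epsilon_k^{2-\frac1p}}\right).
\end{equation*}
By Lemma~\ref{lemma:number-of-processes} with $N_0=1$, the oracle complexity is at most $(\hat m+1)T_n$.

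Finally, I invoke Proposition~\ref{prop:sublinear-rate} with $a=1$ and $b=2-\frac1p$. The hypothesis $a<b$ holds because $p\ge 2$ gives $b\ge\tfrac32>1$. For $c\in(1,b/a)=(1,2-\frac1p)$, the sequence $\epsilon_k=\frac{\epsilon}{2e}\exp(c^k)$ yields $\sum_k\epsilon_{k+1}/\epsilon_k^{b}=\Theta(\epsilon^{-(1-1/p)})$. It also yields $(\hat m+1)\sum_k\epsilon_{k+1}/\epsilon_k^{b}=\Theta(\epsilon^{-(1-1/p)}\log\log(\Delta_0/\epsilon))$. These are exactly the two claimed complexities. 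The lower-bound halves of Proposition~\ref{prop:sublinear-rate} give the optimality claim: up to constants, \eqref{eq:cor-monotone-eq2} equals $\frac{\sqrt{\alpha\beta}}{\mu^{1/p}}$ times this sum, because $1+2\epsilon_{k+1}/\epsilon_k\ge 2\epsilon_{k+1}/\epsilon_k$. Hence no other sequence can improve the rate or the $\log\log$ factor.

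The main obstacle is mild bookkeeping rather than a real difficulty. First, the shift $\tfrac32\epsilon$ in $K_{\mathtt{dec}}$ must stay below $2\epsilon$, so the monotonicity interval and the evaluation at $\delta=2\epsilon_k$ are both valid. Second, the exponents must match the hypotheses of Proposition~\ref{prop:sublinear-rate}.
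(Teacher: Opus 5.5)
Your proposal is correct and follows the paper's proof essentially step for step: substituting $D=G^{-1}(\delta)$ into the smoothing bound of Proposition~\ref{prop:smooth-method}, the same derivative check for~\eqref{eq:cor-monotone-eq1}, the reduction via Corollary~\ref{cor:monotone} to $\sum_{k}\epsilon_{k+1}/\epsilon_k^{2-1/p}$, and Proposition~\ref{prop:sublinear-rate} with $(a,b)=(1,2-\frac{1}{p})$ for both the upper bounds and the optimality claim. The only blemish is cosmetic: you reuse the letter $c$ for $\frac{3}{2}\epsilon$ in the derivative computation, which clashes with the sequence parameter $c$.
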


\begin{proof}[Proof of Corollary~\ref{cor:smooth-monotone}]
The proof is similar to the proof of Corollary~\ref{cor:accel}. By Proposition~\ref{prop:smooth-method}, Assumption~\ref{assumption:decrement-for-fom} holds with $K_{\mathtt{dec}}(\epsilon, \delta, D) = \sqrt{\frac{16\alpha\beta D^2}{\epsilon(\delta-\frac{3}{2}\epsilon)}}$. Then
\begin{equation}
\label{eq:proof-cor-smooth-monotone-eq1}
K_{\mathtt{dec}}(\epsilon, \delta, G^{-1}(\delta)) = \sqrt{ \frac{16\alpha\beta}{\epsilon (\delta -\frac{3}{2}\epsilon)} \cdot \left( \frac{\delta}{\mu} \right)^{\frac{2}{p}} } = \sqrt{ \frac{16\alpha\beta}{ \mu^{\frac{2}{p}}} } \cdot \sqrt{ \frac{\delta^{\frac{2}{p}}}{ \epsilon (\delta -\frac{3}{2}\epsilon)} }.
\end{equation}
For $\delta> \frac{3}{2}\epsilon$, observe the partial derivative formula below of
\begin{equation}
\label{eq:proof-cor-smooth-monotone-eq2}
\frac{\partial}{\partial \delta} \left( \frac{\delta^{\frac{2}{p}} }{\delta -\frac{3}{2}\epsilon} \right) = \frac{\frac{2}{p} \delta^{\frac{2}{p}-1} (\delta -\frac{3}{2}\epsilon) - \delta^{\frac{2}{p}} }{ (\delta -\frac{3}{2}\epsilon)^2 } = \frac{ \delta^{\frac{2}{p}-1}}{ p (\delta -\frac{3}{2}\epsilon)^2} \left( (2-p)\delta- 3\epsilon \right) .
\end{equation}
So $K_{\mathtt{dec}}(\epsilon, \delta, G^{-1}(\delta))$ is nonincreasing in $\delta$ for any $\delta> \frac{3}{2}\epsilon$ and $p\ge 2$, which implies~\eqref{eq:cor-monotone-eq1}. By Corollary~\ref{cor:monotone}, if $\epsilon_0 \le \frac{\epsilon}{2}$, then Algorithm~\ref{algo:general-framework} computes an $\epsilon$-optimal solution with iteration complexity at most
\begin{equation}
\label{eq:proof-cor-smooth-monotone-eq3}
T_n \le \sum_{k=0}^{m-1} \left( 1+ \frac{2\epsilon_{k+1}}{\epsilon_{k}} \right) K_{\mathtt{dec}} \left( \epsilon_k, 2 \epsilon_{k}, G^{-1}(2 \epsilon_{k}) \right) = \mathcal{O}\left( \frac{ \sqrt{\alpha\beta}}{ \mu^{\frac{1}{p}}} \sum_{k=0}^{m-1} \frac{\epsilon_{k+1}}{\epsilon_{k}^{2-\frac{1}{p}} } \right) .
\end{equation}
By Lemma~\ref{lemma:number-of-processes}, the number of processes in the scheme is at most $\hat{m}+1$. So the total oracle complexity is at most
\begin{equation}
\label{eq:proof-cor-smooth-monotone-eq4}
(\hat{m}+1) T_n = \mathcal{O}\left( (\hat{m}+1)\cdot \frac{ \sqrt{\alpha\beta}}{ \mu^{\frac{1}{p}}} \sum_{k=0}^{m-1} \frac{\epsilon_{k+1}}{\epsilon_{k}^{2-\frac{1}{p}} } \right) .
\end{equation}
By Proposition~\ref{prop:sublinear-rate}, if one sets $\epsilon_k = \frac{\epsilon}{2e}\cdot \exp(c^k)$ for any $c\in (1, 2-\frac{1}{p})$, then the iteration complexity and oracle complexity are respectively at most
\begin{equation}
\label{eq:proof-cor-smooth-monotone-eq5}
T_n= \mathcal{O}\left( \frac{ \sqrt{\alpha\beta}}{ \mu^{\frac{1}{p}}} \cdot \frac{1}{\epsilon^{1-\frac{1}{p}}} \right) \quad\text{and}\quad (\hat{m}+1)T_n = \mathcal{O}\left( \frac{ \sqrt{\alpha\beta}}{ \mu^{\frac{1}{p}}}\cdot \frac{1}{\epsilon^{1-\frac{1}{p}}} \cdot \log\log\left( \frac{\Delta_0}{\epsilon} \right) \right) ,
\end{equation}
and no choice of $\{\epsilon_k\}$ can asymptotically improve the two rates in~\eqref{eq:proof-cor-smooth-monotone-eq3} and~\eqref{eq:proof-cor-smooth-monotone-eq4} beyond~\eqref{eq:proof-cor-smooth-monotone-eq5}.
\end{proof}

Similar to the previous example, our Corollary~\ref{cor:smooth-monotone} improves upon the rates in~\cite{Renegar2022restart} under the same setting, where they required an additional burn-in term $\mathcal{O}\left( \sqrt{\alpha\beta} \operatorname{dist}(x^{(0)}, X_\star) \right)$ and a larger number of processes $\mathcal{O}(\log(1/\epsilon))$.

From~\eqref{eq:proof-cor-smooth-monotone-eq1} and~\eqref{eq:proof-cor-smooth-monotone-eq2} in the proof, we see that the monotonicity condition~\eqref{eq:cor-monotone-eq1} does not hold for $p<2$, so Corollary~\ref{cor:monotone} cannot be applied. Nevertheless, the case $p<2$ in this example can still be handled by Theorem~\ref{thm:main}, developed in Section~\ref{Sect:cor-non-monotone}.

\subsection{Application to the Lipschitz Convex Setting}
\label{Subsec:subgrad-monotone}

For the class of $M$-Lipschitz convex functions, let $\mathtt{fom}(\epsilon)$ be the following subgradient method parameterized by $\epsilon >0$. Let $x^{(i)}$ be the $i$-th iterate and $g^{(i)} \in \partial f(x^{(i)})$. If $g^{(i)}=0$, one can terminate the method and declare $x^{(i)}$ as a minimizer. If $g^{(i)} \ne 0$, let $x^{(i+1)} = x^{(i)} - \frac{\epsilon}{\|g^{(i)}\|_2^2} g^{(i)}$.

For this subgradient method, we have the decrement guarantee $K_{\mathtt{dec}}(\epsilon, \delta, D) = \frac{M^2 D^2}{2\epsilon (\delta -\frac{3}{2}\epsilon)}$, see Proposition~\ref{prop:subgrad-method}. As in the previous two examples, we still consider $G(t)=\mu\cdot t^p$. In this example, the monotonicity condition~\eqref{eq:cor-monotone-eq1} holds for $p\ge 2$, so Corollary~\ref{cor:monotone} still applies.

\begin{corollary}
\label{cor:subgrad-monotone}
Consider any $0<\epsilon< \Delta_0$. Suppose $f$ is $M$-Lipschitz and convex, $\mathtt{fom}(\epsilon)$ is the subgradient method, Assumption~\ref{assumption:function-growth} holds for $f$ with $G(t) = \mu\cdot t^p$ for some $\mu>0$ and $p\ge 2$. Set $N_0=1$ and $\epsilon_k = \frac{\epsilon}{2e}\cdot \exp(c^k)$ for any $c\in (1, 3-\frac{2}{p})$, then Algorithm~\ref{algo:general-framework} computes an $\epsilon$-optimal solution with iteration complexity and oracle complexity respectively at most 
\begin{equation*}
\mathcal{O}\left( \frac{M^2}{\mu^{\frac{2}{p}} } \cdot \frac{1}{\epsilon^{2-\frac{2}{p}}} \right) \quad\text{and}\quad \mathcal{O}\left( \frac{M^2}{\mu^{\frac{2}{p}} } \cdot \frac{1}{\epsilon^{2-\frac{2}{p}}} \cdot \log\log\left( \frac{\Delta_0}{\epsilon} \right) \right) .
\end{equation*}
No other choice of $\{\epsilon_k\}$ can asymptotically improve these rates in~\eqref{eq:cor-monotone-eq2}, even the $\log\log$ factor.
\end{corollary}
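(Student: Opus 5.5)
The argument follows the template of Corollaries~\ref{cor:accel} and~\ref{cor:smooth-monotone}. By Proposition~\ref{prop:subgrad-method}, Assumption~\ref{assumption:decrement-for-fom} holds with $K_{\mathtt{dec}}(\epsilon,\delta,D)=\frac{M^2D^2}{2\epsilon(\delta-\frac32\epsilon)}$. Substituting $D=G^{-1}(\delta)=(\delta/\mu)^{1/p}$ gives
\begin{equation*}
K_{\mathtt{dec}}(\epsilon,\delta,G^{-1}(\delta)) = \frac{M^2}{2\mu^{\frac{2}{p}}}\cdot\frac{\delta^{\frac{2}{p}}}{\epsilon(\delta-\frac32\epsilon)} .
\end{equation*}
The $\delta$-dependence is exactly the ratio $\delta^{2/p}/(\delta-\frac32\epsilon)$ that appeared in~\eqref{eq:proof-cor-smooth-monotone-eq2}, now without the square root. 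That derivative computation shows the ratio is nonincreasing for $\delta>\frac32\epsilon$ whenever $p\ge2$, because $(2-p)\delta-3\epsilon<0$. Hence the monotonicity condition~\eqref{eq:cor-monotone-eq1} holds on $[2\epsilon,\infty)$, and Corollary~\ref{cor:monotone} applies.

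Next I apply Corollary~\ref{cor:monotone} with any sequence having $\epsilon_0\le\epsilon/2$, so that a $(2\epsilon_0)$-optimal point is $\epsilon$-optimal. Evaluating at $\delta=2\epsilon_k$ gives
\begin{equation*}
K_{\mathtt{dec}}(\epsilon_k,2\epsilon_k,G^{-1}(2\epsilon_k)) = \frac{M^2 2^{\frac{2}{p}}}{\mu^{\frac{2}{p}}}\cdot \epsilon_k^{\frac{2}{p}-2} .
\end{equation*}
Combining this with $1+2\epsilon_{k+1}/\epsilon_k\le 3\epsilon_{k+1}/\epsilon_k$, the bound~\eqref{eq:cor-monotone-eq2} becomes
\begin{equation*}
T_n=\mathcal{O}\Bigl(\frac{M^2}{\mu^{2/p}}\sum_{k=0}^{m-1}\frac{\epsilon_{k+1}}{\epsilon_k^{3-\frac{2}{p}}}\Bigr).
\end{equation*}
Lemma~\ref{lemma:number-of-processes} with $N_0=1$ bounds the number of processes by $\hat m+1$, so the oracle complexity is at most $(\hat m+1)T_n$.

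Finally, I invoke Proposition~\ref{prop:sublinear-rate} with $a=1$ and $b=3-\frac2p$. Since $p\ge2$ gives $b\ge2>a$, the proposition applies. The choice $\epsilon_k=\frac{\epsilon}{2e}\exp(c^k)$ with $c\in(1,3-\frac2p)$ satisfies $\epsilon_0=\epsilon/2$. It yields a sum of order $\epsilon^{-(b-a)}=\epsilon^{-(2-\frac2p)}$, and the product with $\hat m+1$ gains only an extra $\log\log(\Delta_0/\epsilon)$ factor. The lower-bound half of the same proposition shows that no other sequence can improve either rate as expressed through~\eqref{eq:cor-monotone-eq2}, including the $\log\log$ factor.

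I expect no step to be a real obstacle. The one point needing care is the monotonicity check, specifically that the interval $[2\epsilon,\infty)$ lies inside the region $\delta>\frac32\epsilon$ where $K_{\mathtt{dec}}$ is defined and decreasing; it does, since $2\epsilon>\frac32\epsilon$. The remaining bookkeeping of constants is routine.
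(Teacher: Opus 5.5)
Your proposal is correct and matches the paper's proof essentially step for step. You substitute $G^{-1}(\delta)=(\delta/\mu)^{1/p}$ into the subgradient $K_{\mathtt{dec}}$, reuse the derivative computation from the smoothing corollary to verify~\eqref{eq:cor-monotone-eq1}, bound~\eqref{eq:cor-monotone-eq2} by $\mathcal{O}\bigl(\frac{M^2}{\mu^{2/p}}\sum_{k=0}^{m-1}\epsilon_{k+1}/\epsilon_k^{3-\frac{2}{p}}\bigr)$, multiply by $\hat m+1$ via Lemma~\ref{lemma:number-of-processes}, and conclude with Proposition~\ref{prop:sublinear-rate} at $(a,b)=(1,3-\frac{2}{p})$; your explicit value $K_{\mathtt{dec}}(\epsilon_k,2\epsilon_k,G^{-1}(2\epsilon_k))=\frac{2^{2/p}M^2}{\mu^{2/p}}\epsilon_k^{\frac{2}{p}-2}$ and the check $\epsilon_0=\epsilon/2$ are both right.
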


\begin{proof}[Proof of Corollary~\ref{cor:subgrad-monotone}]
The proof parallels that of Corollary~\ref{cor:smooth-monotone}. By Proposition~\ref{prop:subgrad-method}, Assumption~\ref{assumption:decrement-for-fom} holds with $K_{\mathtt{dec}}(\epsilon, \delta, D) = \frac{M^2 D^2}{2\epsilon (\delta -\frac{3}{2}\epsilon)}$. Then
\begin{equation}
\label{eq:proof-cor-subgrad-monotone-eq1}
K_{\mathtt{dec}}(\epsilon, \delta, G^{-1}(\delta)) = \frac{M^2}{2\epsilon (\delta -\frac{3}{2}\epsilon)} \cdot \left( \frac{\delta}{\mu} \right)^{\frac{2}{p}} = \frac{M^2}{2 \mu^{\frac{2}{p}}} \cdot \frac{\delta^{\frac{2}{p}}}{ \epsilon (\delta -\frac{3}{2}\epsilon)} .
\end{equation}
By~\eqref{eq:proof-cor-smooth-monotone-eq2}, $K_{\mathtt{dec}}(\epsilon, \delta, G^{-1}(\delta))$ is nonincreasing in $\delta$ for any $\delta> \frac{3}{2}\epsilon$ and $p\ge 2$, which implies~\eqref{eq:cor-monotone-eq1}. By Corollary~\ref{cor:monotone}, if $\epsilon_0 \le \frac{\epsilon}{2}$, then Algorithm~\ref{algo:general-framework} computes an $\epsilon$-optimal solution with iteration complexity at most
\begin{equation}
\label{eq:proof-cor-subgrad-monotone-eq3}
T_n \le \sum_{k=0}^{m-1} \left( 1+ \frac{2\epsilon_{k+1}}{\epsilon_{k}} \right) K_{\mathtt{dec}} \left( \epsilon_k, 2 \epsilon_{k}, G^{-1}(2 \epsilon_{k}) \right) = \mathcal{O}\left( \frac{M^2}{\mu^{\frac{2}{p}} } \sum_{k=0}^{m-1} \frac{\epsilon_{k+1}}{\epsilon_{k}^{3-\frac{2}{p}} } \right) .
\end{equation}
Using Lemma~\ref{lemma:number-of-processes}, the total oracle complexity is at most
\begin{equation}
\label{eq:proof-cor-subgrad-monotone-eq4}
(\hat{m}+1) T_n = \mathcal{O}\left( (\hat{m}+1) \cdot \frac{M^2}{\mu^{\frac{2}{p}} } \sum_{k=0}^{m-1} \frac{\epsilon_{k+1}}{\epsilon_{k}^{3-\frac{2}{p}} } \right) .
\end{equation}
By Proposition~\ref{prop:sublinear-rate}, if one sets $\epsilon_k = \frac{\epsilon}{2e}\cdot \exp(c^k)$ for any $c\in (1, 3-\frac{2}{p})$, then the iteration complexity and oracle complexity are respectively at most
\begin{equation}
\label{eq:proof-cor-subgrad-monotone-eq5}
T_n= \mathcal{O}\left( \frac{M^2}{\mu^{\frac{2}{p}} } \cdot \frac{1}{\epsilon^{2-\frac{2}{p}}} \right) \quad\text{and}\quad (\hat{m}+1)T_n = \mathcal{O}\left( \frac{M^2}{\mu^{\frac{2}{p}} } \cdot \frac{1}{\epsilon^{2-\frac{2}{p}}} \cdot \log\log\left( \frac{\Delta_0}{\epsilon} \right) \right) ,
\end{equation}
and no choice of $\{\epsilon_k\}$ can asymptotically improve the two rates in~\eqref{eq:proof-cor-subgrad-monotone-eq3} and~\eqref{eq:proof-cor-subgrad-monotone-eq4} beyond~\eqref{eq:proof-cor-subgrad-monotone-eq5}.
\end{proof}

\section{Applications under Growth with $p\in [1,2)$}
\label{Sect:cor-non-monotone}

For the previous two examples in Section~\ref{Subsec:smooth-monotone} and Section~\ref{Subsec:subgrad-monotone}, the monotonicity property~\eqref{eq:cor-monotone-eq1} and Corollary~\ref{cor:monotone} fail for the case $p\in [1,2)$. In this section, we consider an alternative setting to that of Corollary~\ref{cor:monotone}, under which Theorem~\ref{thm:main} can still be simplified and applied to the previous examples when $p\in [1,2)$. This allows us to get usable bounds for $p\in[1,2)$, at the cost of requiring a (loose) upper bound on $\Delta_0$. 

Note that $N_0$ can be chosen differently from the default value $1$ in Algorithm~\ref{algo:general-framework}. If $N_0>m$, then Phase 1 in Theorem~\ref{thm:main} becomes $0$. By enlarging the intervals $[\delta_\text{min}, \delta_\text{max}]$ in the terms $Q_k\left( \delta_\text{min}, \delta_\text{max} \right)$, Phase 2 in Theorem~\ref{thm:main} can be further upper bounded by a much simpler form. This leads to the following guarantee whenever $N_0>m$, or equivalently, $\Delta_0 \le 2\epsilon_{N_0-1}$.

\begin{corollary}
\label{cor:non-monotone-cor}
Suppose Assumption~\ref{assumption:decrement-for-fom} holds for $\mathtt{fom}(\epsilon)$ and Assumption~\ref{assumption:function-growth} holds for $f$. If $\Delta_0 \le 2\epsilon_{N_0-1}$, then Algorithm~\ref{algo:general-framework} computes a $(2\epsilon_0)$-optimal solution no later than round
\begin{equation}
\label{eq:cor-non-monotone-eq1}
T_n\le \sum_{k=0}^{m-1} \frac{2\epsilon_{k+1}}{\epsilon_{k}} \cdot Q_k\left( 2 \epsilon_{k}, 3\epsilon_{k+1} \right) .
\end{equation}
\end{corollary}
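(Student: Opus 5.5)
The plan is to specialize Theorem~\ref{thm:main}: kill the Phase 1 term using $\Delta_0\le 2\epsilon_{N_0-1}$, then bound each Phase 2 summand crudely by enlarging the intervals in $Q_k$.

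\textbf{Phase 1 vanishes.} Since $\{\epsilon_k\}$ is strictly increasing and $\Delta_0\le 2\epsilon_{N_0-1}$, the definition~\eqref{eq:m} gives $m\le N_0-1$. The Phase 1 term in Theorem~\ref{thm:main} is then a maximum over the empty index set $N_0\le s\le m$, which is $0$ by the convention of Section~\ref{Sect:Prelim}. (Equivalently, case (i) of the proof of Theorem~\ref{thm:main} gives $T_0=0$ directly.) Hence $T_n$ is bounded by the Phase 2 term alone.

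\textbf{Enlarging the intervals.} Fix $0\le k\le m-1$, an integer $1\le r<2\epsilon_{k+1}/\epsilon_k$, and $0\le i\le r-1$. Consider the interval
$$\bigl[(r+1-i)\epsilon_k,\ \min\{2\epsilon_{k+1}+(1-i)\epsilon_k,\Delta_0\}\bigr].$$
Its lower endpoint satisfies $(r+1-i)\epsilon_k\ge 2\epsilon_k$ because $i\le r-1$. Its upper endpoint satisfies
$$2\epsilon_{k+1}+(1-i)\epsilon_k\le 2\epsilon_{k+1}+\epsilon_k<3\epsilon_{k+1},$$
using $i\ge 0$ and $\epsilon_k<\epsilon_{k+1}$. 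So the interval lies inside $[2\epsilon_k,3\epsilon_{k+1}]$.

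Since $Q_k$ in~\eqref{eq:Q} is a supremum of $K_{\mathtt{dec}}$ over its interval, it is monotone under interval inclusion. Therefore each summand is at most $Q_k(2\epsilon_k,3\epsilon_{k+1})$.

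\textbf{Handling empty or degenerate intervals.} If the min-interval is empty, the supremum is over the empty set and the term is $0$ (or the term simply does not arise, as in Lemma~\ref{lemma:Tk-T'k-upper-bound}, where these intervals contain the actual $\delta_i$). Either way the bound above remains valid, and $K_{\mathtt{dec}}\ge 0$ keeps all terms non-negative. This bookkeeping is the only delicate point.

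\textbf{Summing.} The inner sum over $i$ has $r$ terms, and $r<2\epsilon_{k+1}/\epsilon_k$. So the inner maximum over $r$ is at most $\frac{2\epsilon_{k+1}}{\epsilon_k}\,Q_k(2\epsilon_k,3\epsilon_{k+1})$. Summing over $k=0,\dots,m-1$ yields~\eqref{eq:cor-non-monotone-eq1}. Finally, Theorem~\ref{thm:main} guarantees a $(2\epsilon_0)$-optimal solution by round $T_n$, which completes the argument.
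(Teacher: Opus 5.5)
Your proposal is correct and follows the paper's proof essentially step for step: Phase 1 vanishes because $m\le N_0-1$, each interval is enlarged into $[2\epsilon_k,3\epsilon_{k+1}]$, and since at most $r<\frac{2\epsilon_{k+1}}{\epsilon_k}$ nonnegative summands appear, you get the factor $\frac{2\epsilon_{k+1}}{\epsilon_k}$. Your remark about possibly empty intervals is harmless extra bookkeeping that the paper leaves implicit.
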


\begin{proof}[Proof of Corollary~\ref{cor:non-monotone-cor}]
Since $\Delta_0 \le 2\epsilon_{N_0-1}$, we have $m\le N_0-1<N_0$ by the definition of $m$ in~\eqref{eq:m}. Then the Phase 1 term in Theorem~\ref{thm:main} is zero, so Algorithm~\ref{algo:general-framework} computes a $(2\epsilon_0)$-optimal solution no later than round
\begin{equation}
\label{eq:proof-cor-non-monotone-cor-eq1}
T_n \le \sum_{k=0}^{m-1}\ \max_{1\le r < \frac{2\epsilon_{k+1}}{\epsilon_{k}} }\ \sum_{i=0}^{r-1} Q_k\left( (r+1-i) \epsilon_{k}, \min \{ 2\epsilon_{k+1} + (1-i) \epsilon_{k}, \Delta_0 \} \right) .
\end{equation}
For each $0\le i\le r-1$, we have the inequalities $(r+1-i) \epsilon_{k} \ge 2\epsilon_{k}$, and $\min \{ 2\epsilon_{k+1} + (1-i) \epsilon_{k}, \Delta_0 \} \le 2\epsilon_{k+1} + (1-i) \epsilon_{k} \le 3\epsilon_{k+1}$. Then by the definition of $Q_k$ in~\eqref{eq:Q},
\begin{equation}
\label{eq:proof-cor-non-monotone-cor-eq2}
Q_k\left( (r+1-i) \epsilon_{k}, \min \{ 2\epsilon_{k+1} + (1-i) \epsilon_{k}, \Delta_0 \} \right) \le Q_k\left( 2 \epsilon_{k}, 3\epsilon_{k+1} \right).
\end{equation}
Using~\eqref{eq:proof-cor-non-monotone-cor-eq2} to control the terms in~\eqref{eq:proof-cor-non-monotone-cor-eq1}, we obtain
\begin{equation*}
T_n \le \sum_{k=0}^{m-1}\ \max_{1\le r < \frac{2\epsilon_{k+1}}{\epsilon_{k}} }\ \sum_{i=0}^{r-1} Q_k\left( 2 \epsilon_{k}, 3\epsilon_{k+1} \right) \le \sum_{k=0}^{m-1} \frac{2\epsilon_{k+1}}{\epsilon_{k}} \cdot Q_k\left( 2 \epsilon_{k}, 3\epsilon_{k+1} \right) . \qedhere
\end{equation*}
\end{proof}

It is worth noting that, unlike the results in Corollary~\ref{cor:monotone} and Section~\ref{Sect:cor-monotone}, the assumption $\Delta_0\le 2\epsilon_{N_0-1}$ requires an upper bound on $\Delta_0$ to choose $N_0$. So Corollary~\ref{cor:non-monotone-cor} is not fully parameter-free. The dependence of $N_0$ on $\Delta_0$ is illustrated in the corollaries below, where we apply Corollary~\ref{cor:non-monotone-cor} to the case $p\in [1,2)$ of the previous two examples.

We first consider the smoothing method discussed in Section~\ref{Subsec:smooth-monotone}. The following analysis and results for iteration complexity are similar to the corollaries in Section~\ref{Sect:cor-monotone}. For oracle complexities, the multiplicative factor $\max\{\hat{m}+1, N_0\}$ now strongly depends on $N_0$, which is no longer parameter-free. Nevertheless, the result below shows that the required $N_0$ is not too large if the sequence $\{\epsilon_k\}$ grows moderately fast. In particular, a doubly exponential sequence $\{\epsilon_k\}$ gives a sublinear iteration complexity (which is also the optimal rate one can obtain from Corollary~\ref{cor:non-monotone-cor}), and the corresponding $N_0= \Omega\left( \log\log\left( \frac{\Delta_0}{\epsilon} \right) \right)$ only requires an upper bound for a doubly logarithmic term.

\begin{corollary}
\label{cor:smooth-non-monotone}
Consider any $0<\epsilon< \Delta_0$. Suppose an $(\alpha,\beta)$-convex smoothing family of $f$ is known, $\mathtt{fom}(\epsilon)$ is the smoothing method, Assumption~\ref{assumption:function-growth} holds for $f$ with $G(t) = \mu\cdot t^p$ for some $\mu>0$ and $1\le p<2$. Assume
\begin{equation}
\label{eq:cor-smooth-non-monotone-eq1}
\text{$N_0$ is chosen so that } \Delta_0 \le 2\epsilon_{N_0-1}.
\end{equation}
(i) For $1<p<2$, set $\epsilon_k = \frac{\epsilon}{2e}\cdot \exp(c^k)$ for any $c\in (1, \frac{3p}{p+2})$, $N_0= \Omega\left( \log\log\left( \frac{\Delta_0}{\epsilon} \right) \right)$, then Algorithm~\ref{algo:general-framework} computes an $\epsilon$-optimal solution with iteration complexity and oracle complexity respectively at most
\begin{equation*}
\mathcal{O}\left( \frac{ \sqrt{\alpha\beta}}{ \mu^{\frac{1}{p}}} \cdot \frac{1}{\epsilon^{1-\frac{1}{p}}} \right) \quad\text{and}\quad \mathcal{O}\left( \frac{ \sqrt{\alpha\beta}}{ \mu^{\frac{1}{p}}}\cdot \frac{1}{\epsilon^{1-\frac{1}{p}}} \cdot N_0 \right) .
\end{equation*}
(ii) For $p=1$, set $\epsilon_k = \frac{\epsilon}{2}\cdot c^k$ for any $c>1$, $N_0= \Omega\left( \log\left( \frac{\Delta_0}{\epsilon} \right) \right)$, then Algorithm~\ref{algo:general-framework} computes an $\epsilon$-optimal solution with iteration complexity and oracle complexity respectively at most
\begin{equation*}
\mathcal{O}\left( \frac{ \sqrt{\alpha\beta}}{ \mu^{\frac{1}{p}}} \cdot \log\left( \frac{\Delta_0}{\epsilon} \right) \right) \quad\text{and}\quad \mathcal{O}\left( \frac{ \sqrt{\alpha\beta}}{ \mu^{\frac{1}{p}}} \cdot \log\left( \frac{\Delta_0}{\epsilon} \right) \cdot N_0 \right) .
\end{equation*}
In either case, no other choice of $\{\epsilon_k\}$ can asymptotically improve these rates in~\eqref{eq:cor-non-monotone-eq1} under~\eqref{eq:cor-smooth-non-monotone-eq1}.
\end{corollary}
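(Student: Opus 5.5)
The plan is to apply Corollary~\ref{cor:non-monotone-cor}, which is available because of assumption~\eqref{eq:cor-smooth-non-monotone-eq1}. It then remains to bound $Q_k(2\epsilon_k, 3\epsilon_{k+1})$ explicitly for the smoothing method.

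\textbf{Step 1: a pointwise bound on $Q_k$.} By Proposition~\ref{prop:smooth-method} and~\eqref{eq:proof-cor-smooth-monotone-eq1},
\[
K_{\mathtt{dec}}(\epsilon_k,\delta,G^{-1}(\delta)) = \frac{4\sqrt{\alpha\beta}}{\mu^{1/p}}\sqrt{\frac{\delta^{2/p}}{\epsilon_k(\delta-\frac32\epsilon_k)}} .
\]
For $\delta\in[2\epsilon_k,3\epsilon_{k+1}]$ we have $\delta-\frac32\epsilon_k\ge \delta/4$. The expression is therefore at most a constant times $\sqrt{\delta^{2/p-1}/\epsilon_k}$. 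Since $p<2$, the exponent $2/p-1\ge 0$, so this is nondecreasing in $\delta$. (This is exactly the failure of the monotonicity condition~\eqref{eq:cor-monotone-eq1}.) The supremum is thus attained at the right endpoint $\delta=3\epsilon_{k+1}$, which gives
\[
Q_k(2\epsilon_k,3\epsilon_{k+1}) = \mathcal{O}\left(\frac{\sqrt{\alpha\beta}}{\mu^{1/p}}\cdot \frac{\epsilon_{k+1}^{\frac1p-\frac12}}{\epsilon_k^{1/2}}\right).
\]

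\textbf{Step 2: the iteration bound.} Plugging this into~\eqref{eq:cor-non-monotone-eq1} gives
\[
T_n=\mathcal{O}\left(\frac{\sqrt{\alpha\beta}}{\mu^{1/p}}\sum_{k=0}^{m-1}\frac{\epsilon_{k+1}^{a}}{\epsilon_k^{b}}\right),\qquad a=\tfrac12+\tfrac1p,\quad b=\tfrac32 .
\]
For $1<p<2$ we have $a<b$, with $b-a=1-\frac1p$ and $b/a=\frac{3p}{p+2}$. Proposition~\ref{prop:sublinear-rate} with $\epsilon_k=\frac{\epsilon}{2e}\exp(c^k)$ and $c\in(1,\frac{3p}{p+2})$ then yields $\mathcal{O}\bigl(\frac{\sqrt{\alpha\beta}}{\mu^{1/p}}\epsilon^{-(1-1/p)}\bigr)$. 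It also shows this is optimal among all choices of $\{\epsilon_k\}$, because the expression in~\eqref{eq:cor-non-monotone-eq1} matches this sum up to constants (the bound in Step 1 is tight up to constants). For $p=1$ we have $a=b=\frac32$, and Proposition~\ref{prop:linear-rate} with $\epsilon_k=\frac{\epsilon}{2}c^k$ gives the $\log(\Delta_0/\epsilon)$ rate and its optimality. In both cases $\epsilon_0\le\epsilon/2$, so a $(2\epsilon_0)$-optimal point is $\epsilon$-optimal.

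\textbf{Step 3: oracle complexity and $N_0$.} By Lemma~\ref{lemma:number-of-processes} there are at most $\max\{\hat m+1,N_0\}$ processes. The condition $\Delta_0\le 2\epsilon_{N_0-1}$ forces $\epsilon_{N_0-1}\ge\Delta_0/2$, so $N_0-1\ge m$. Also $\hat m$ is at most the first index with $\epsilon_k>\Delta_0$, which is of the same order as $m$. Solving $\frac{\epsilon}{2e}\exp(c^{N_0-1})\ge \Delta_0/2$ gives $N_0=\Omega(\log\log(\Delta_0/\epsilon))$ in case (i). Solving $\frac{\epsilon}{2}c^{N_0-1}\ge\Delta_0/2$ gives $N_0=\Omega(\log(\Delta_0/\epsilon))$ in case (ii). In both cases $\hat m+1=\mathcal{O}(N_0)$, so the oracle complexity is $\mathcal{O}(N_0\,T_n)$.

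I expect the main obstacle to be the lower-bound (optimality) claim, namely checking that the simplification in Step 1 loses only constants. Over the relevant range, $\epsilon_{k+1}\ge\epsilon_k$ and $\delta-\frac32\epsilon_k\le\delta$, so $Q_k(2\epsilon_k,3\epsilon_{k+1})$ is also bounded below by a constant times the same expression. This makes the infimum statements of the propositions transfer directly.
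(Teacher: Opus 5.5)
Your proof is correct. It follows the paper's skeleton: Corollary~\ref{cor:non-monotone-cor}, then an explicit bound on $Q_k(2\epsilon_k,3\epsilon_{k+1})$, then Propositions~\ref{prop:sublinear-rate} and~\ref{prop:linear-rate}, together with $\max\{\hat m+1,N_0\}=\mathcal{O}(N_0)$. The difference is in how you bound $Q_k$.

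The paper uses the derivative \eqref{eq:proof-cor-smooth-monotone-eq2} to see that $K_{\mathtt{dec}}(\epsilon_k,\delta,G^{-1}(\delta))$ first decreases and then increases in $\delta$. It bounds the supremum by the sum of the two endpoint values, so it carries two sums, $\sum_k \epsilon_{k+1}/\epsilon_k^{2-1/p}$ and $\sum_k \epsilon_{k+1}^{1/2+1/p}/\epsilon_k^{3/2}$. That costs two applications of Proposition~\ref{prop:sublinear-rate} and the compatibility check $\frac{3p}{p+2}\le\frac{2p-1}{p}$.

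Your majorant from $\delta-\frac32\epsilon_k\ge\delta/4$ is nondecreasing in $\delta$, so only the right endpoint survives and you get the second sum alone. Nothing is lost, because the paper's second summand divided by its first is $(\epsilon_{k+1}/\epsilon_k)^{1/p-1/2}\ge 1$. Your matching lower bound at $\delta=3\epsilon_{k+1}$ also shows that \eqref{eq:cor-non-monotone-eq1} is itself $\Theta$ of that single sum. This makes the optimality claim immediate. The paper instead obtains it from the trivial $\Omega(\epsilon^{-(1-1/p)})$ bound on its first, left-endpoint sum.

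Two precision points:
\begin{itemize}
\item The supremum you place at the right endpoint is that of your majorant, not of $K_{\mathtt{dec}}$ itself. $K_{\mathtt{dec}}$ is nonincreasing on $[2\epsilon_k,\frac{3}{2-p}\epsilon_k]$ and can peak at the left endpoint. This is harmless, since you only use the majorant as an upper bound.
\item For optimality of the oracle complexity, the relevant lower bound is $\max\{\hat m+1,N_0\}\ge N_0$ combined with the unweighted infimum in the propositions, not the $(\hat m+1)$-weighted one.
\end{itemize}
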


\begin{proof}[Proof of Corollary~\ref{cor:smooth-non-monotone}]
Given the assumption $\Delta_0 \le 2\epsilon_{N_0-1}$, by Corollary~\ref{cor:non-monotone-cor}, if $\epsilon_0 \le \frac{\epsilon}{2}$, then Algorithm~\ref{algo:general-framework} computes an $\epsilon$-optimal solution no later than round
\begin{equation}
\label{eq:proof-cor-smooth-non-monotone-eq1}
T_n\le \sum_{k=0}^{m-1} \frac{2\epsilon_{k+1}}{\epsilon_{k}} \cdot Q_k\left( 2 \epsilon_{k}, 3\epsilon_{k+1} \right) .
\end{equation}
Note $K_{\mathtt{dec}}(\epsilon, \delta, G^{-1}(\delta)) = \sqrt{ \frac{16\alpha\beta}{ \mu^{\frac{2}{p}}} \cdot \frac{\delta^{\frac{2}{p}}}{ \epsilon (\delta -\frac{3}{2}\epsilon)} }$, as given by~\eqref{eq:proof-cor-smooth-monotone-eq1}. Then by~\eqref{eq:proof-cor-smooth-monotone-eq2}, for any fixed $\epsilon>0$ and $1\le p<2$, $K_{\mathtt{dec}}(\epsilon, \delta, G^{-1}(\delta))$ is nonincreasing in $\delta$ on the interval $(\frac{3}{2}\epsilon, \frac{3}{2-p}\epsilon ]$, and nondecreasing on $[\frac{3}{2-p}\epsilon, +\infty)$. So
\begin{equation}
\label{eq:proof-cor-smooth-non-monotone-eq2}
\begin{split}
& Q_k\left( 2\epsilon_{k}, 3\epsilon_{k+1} \right) = \sup_{\delta \in [2\epsilon_{k}, 3\epsilon_{k+1} ]} K_{\mathtt{dec}} \left( \epsilon_{k}, \delta, G^{-1}(\delta) \right) \\
&\le \max\left\{ K_{\mathtt{dec}} \left( \epsilon_{k}, 2\epsilon_{k}, G^{-1}(2\epsilon_{k}) \right), K_{\mathtt{dec}} \left( \epsilon_{k}, 3\epsilon_{k+1}, G^{-1}(3\epsilon_{k+1}) \right) \right\} \\
&\le K_{\mathtt{dec}} \left( \epsilon_{k}, 2\epsilon_{k}, G^{-1}(2\epsilon_{k}) \right) + K_{\mathtt{dec}} \left( \epsilon_{k}, 3\epsilon_{k+1}, G^{-1}(3\epsilon_{k+1}) \right) \\
&= \sqrt{ \frac{16\alpha\beta}{ \mu^{\frac{2}{p}}} } \cdot \left( \sqrt{ \frac{(2\epsilon_{k})^{\frac{2}{p}}}{ \frac{1}{2} \epsilon_{k}^2}} + \sqrt{ \frac{(3\epsilon_{k+1})^{\frac{2}{p}}}{ \epsilon_k (3\epsilon_{k+1} -\frac{3}{2}\epsilon_k)}} \right).
\end{split}
\end{equation}
The rest of this proof follows similarly to that of Corollary~\ref{cor:accel}. Combining~\eqref{eq:proof-cor-smooth-non-monotone-eq1} and~\eqref{eq:proof-cor-smooth-non-monotone-eq2}, the iteration complexity is at most
\begin{equation}
\label{eq:proof-cor-smooth-non-monotone-eq3}
\begin{split}
T_n &\le \sqrt{ \frac{16\alpha\beta}{ \mu^{\frac{2}{p}}}} \cdot \left( \sum_{k=0}^{m-1} \frac{2\epsilon_{k+1}}{\epsilon_{k}} \cdot \sqrt{ \frac{ (2\epsilon_{k})^{\frac{2}{p}}}{ \frac{1}{2} \epsilon_{k}^2}} + \sum_{k=0}^{m-1} \frac{2\epsilon_{k+1}}{\epsilon_{k}} \cdot \sqrt{ \frac{ (3\epsilon_{k+1})^{\frac{2}{p}}}{ \epsilon_k (3\epsilon_{k+1} -\frac{3}{2}\epsilon_k)}} \right) \\
&= \mathcal{O}\left( \frac{ \sqrt{\alpha\beta}}{ \mu^{\frac{1}{p}}} \cdot \left( \sum_{k=0}^{m-1} \frac{\epsilon_{k+1}}{\epsilon_{k}^{2-\frac{1}{p}} } + \sum_{k=0}^{m-1} \frac{\epsilon_{k+1}^{\frac{1}{2}+ \frac{1}{p}} }{\epsilon_{k}^{\frac{3}{2}}} \right) \right) ,
\end{split}
\end{equation}
where the last step is because $\frac{1}{3\epsilon_{k+1} -\frac{3}{2}\epsilon_k} \le \frac{2}{3\epsilon_{k+1}}$. By Lemma~\ref{lemma:number-of-processes}, the number of processes in the scheme is at most $\max\{\hat{m}+1, N_0\}$. So the total oracle complexity is at most
\begin{equation}
\label{eq:proof-cor-smooth-non-monotone-eq4}
\max\{\hat{m}+1, N_0\} \cdot T_n = \mathcal{O}\left( \max\{\hat{m}+1, N_0\} \cdot \frac{ \sqrt{\alpha\beta}}{ \mu^{\frac{1}{p}}} \cdot \left( \sum_{k=0}^{m-1} \frac{\epsilon_{k+1}}{\epsilon_{k}^{2-\frac{1}{p}} } + \sum_{k=0}^{m-1} \frac{\epsilon_{k+1}^{\frac{1}{2}+ \frac{1}{p}} }{\epsilon_{k}^{\frac{3}{2}}} \right) \right) .
\end{equation}
For $1<p<2$, if one sets $\epsilon_k = \frac{\epsilon}{2e}\cdot \exp(c^k)$ for any $c\in (1, \frac{3p}{p+2})$, then $\hat{m}= \Theta\left( \log\log\left( \frac{\Delta_0}{\epsilon} \right) \right)$ and $N_0 = \Omega\left( \log\log\left( \frac{\Delta_0}{\epsilon} \right) \right)$, by the definition of $\hat{m}$ in~\eqref{eq:m} and the assumption for $N_0$ in~\eqref{eq:cor-smooth-non-monotone-eq1} respectively. So $\max\{ \hat{m}+1, N_0\} = \mathcal{O}(N_0)$. Consider applying Proposition~\ref{prop:sublinear-rate} to $(a_1,b_1) = (1, 2-\frac{1}{p})$ and $(a_2,b_2) = (\frac{1}{2}+\frac{1}{p}, \frac{3}{2})$, which requires $1<c< \frac{b_1}{a_1}= \frac{2p-1}{p}$ and $1<c< \frac{b_2}{a_2}= \frac{3p}{p+2}$. Here we have $\frac{3p}{p+2}\le \frac{2p-1}{p}$ since $1<p<2$. So setting $c\in (1, \frac{3p}{p+2})$ satisfies the requirements. Then by Proposition~\ref{prop:sublinear-rate}, for this choice of $\{\epsilon_k\}$, the iteration complexity and oracle complexity are respectively at most
\begin{equation}
\label{eq:proof-cor-smooth-non-monotone-eq5}
T_n = \mathcal{O}\left( \frac{ \sqrt{\alpha\beta}}{ \mu^{\frac{1}{p}}} \cdot \frac{1}{\epsilon^{1-\frac{1}{p}}} \right) \quad\text{and}\quad \max\{\hat{m}+1, N_0\}\cdot T_n = \mathcal{O}\left( \frac{ \sqrt{\alpha\beta}}{ \mu^{\frac{1}{p}}} \cdot \frac{1}{\epsilon^{1-\frac{1}{p}}} \cdot N_0 \right) .
\end{equation}
Consider the lower bounds for the two asymptotic rates in~\eqref{eq:proof-cor-smooth-non-monotone-eq3} and~\eqref{eq:proof-cor-smooth-non-monotone-eq4}. First, we have $\sum_{k=0}^{m-1} \frac{\epsilon_{k+1}}{\epsilon_{k}^{2-\frac{1}{p}} } = \Omega(\epsilon^{-1+\frac{1}{p}})$ from Proposition~\ref{prop:sublinear-rate}. Also, note that $\max\{ \hat{m}+1, N_0\} \ge N_0$. Combining these two facts, no choice of $\{\epsilon_k\}$ can asymptotically improve the two rates in~\eqref{eq:proof-cor-smooth-non-monotone-eq3} and~\eqref{eq:proof-cor-smooth-non-monotone-eq4} beyond~\eqref{eq:proof-cor-smooth-non-monotone-eq5} if $1<p<2$.

Similarly, for $p=1$, if one sets $\epsilon_k = \frac{\epsilon}{2}\cdot c^k$ for any $c>1$, then $\hat{m}= \Theta\left( \log\left( \frac{\Delta_0}{\epsilon} \right) \right)$ and $N_0 = \Omega\left( \log\left( \frac{\Delta_0}{\epsilon} \right) \right)$. So $\max\{ \hat{m}+1, N_0\} = \mathcal{O}(N_0)$. Then by Proposition~\ref{prop:linear-rate}, the iteration complexity and oracle complexity are respectively at most
\begin{equation}
\label{eq:proof-cor-smooth-non-monotone-eq6}
T_n= \mathcal{O}\left( \frac{ \sqrt{\alpha\beta}}{ \mu^{\frac{1}{p}}} \cdot \log\left( \frac{\Delta_0}{\epsilon} \right) \right) \quad\text{and}\quad \max\{ \hat{m}+1, N_0\} \cdot T_n = \mathcal{O}\left( \frac{ \sqrt{\alpha\beta}}{ \mu^{\frac{1}{p}}} \cdot \log\left( \frac{\Delta_0}{\epsilon} \right) \cdot N_0 \right) .
\end{equation}
By the lower bound in Proposition~\ref{prop:linear-rate} and the fact $\max\{ \hat{m}+1, N_0\} \ge N_0$, no choice of $\{\epsilon_k\}$ can asymptotically improve the two rates in~\eqref{eq:proof-cor-smooth-non-monotone-eq3} and~\eqref{eq:proof-cor-smooth-non-monotone-eq4} beyond~\eqref{eq:proof-cor-smooth-non-monotone-eq6} if $p=1$.
\end{proof}

Next, we consider the subgradient method and Lipschitz convex functions as described in Section~\ref{Subsec:subgrad-monotone}. As in the previous application, Corollary~\ref{cor:non-monotone-cor} handles the case $p\in [1,2)$ and leads to the following result.

\begin{corollary}
\label{cor:subgrad-non-monotone}
Consider any $0<\epsilon< \Delta_0$. Suppose $f$ is $M$-Lipschitz and convex, $\mathtt{fom}(\epsilon)$ is the subgradient method, Assumption~\ref{assumption:function-growth} holds for $f$ with $G(t) = \mu\cdot t^p$ for some $\mu>0$ and $1\le p< 2$. Assume 
\begin{equation}
\label{eq:cor-subgrad-non-monotone-eq1}
\text{$N_0$ is chosen so that } \Delta_0 \le 2\epsilon_{N_0-1}.
\end{equation}
(i) For $1<p<2$, set $\epsilon_k = \frac{\epsilon}{2e}\cdot \exp(c^k)$ for any $c\in (1, p)$, $N_0= \Omega\left( \log\log\left( \frac{\Delta_0}{\epsilon} \right) \right)$, then Algorithm~\ref{algo:general-framework} computes an $\epsilon$-optimal solution with iteration complexity and oracle complexity respectively at most
\begin{equation*}
\mathcal{O}\left( \frac{M^2}{ \mu^{\frac{2}{p}}} \cdot \frac{1}{\epsilon^{2-\frac{2}{p}}} \right) \quad\text{and}\quad \mathcal{O}\left( \frac{M^2}{\mu^{\frac{2}{p}} } \cdot \frac{1}{\epsilon^{2-\frac{2}{p}}} \cdot N_0 \right) .
\end{equation*}
(ii) For $p=1$, set $\epsilon_k = \frac{\epsilon}{2}\cdot c^k$ for any $c>1$, $N_0= \Omega\left( \log\left( \frac{\Delta_0}{\epsilon} \right) \right)$, then Algorithm~\ref{algo:general-framework} computes an $\epsilon$-optimal solution with iteration complexity and oracle complexity respectively at most
\begin{equation*}
\mathcal{O}\left( \frac{M^2}{ \mu^{\frac{2}{p}}} \cdot \log\left( \frac{\Delta_0}{\epsilon} \right) \right) \quad\text{and}\quad \mathcal{O}\left( \frac{M^2}{ \mu^{\frac{2}{p}}} \cdot \log\left( \frac{\Delta_0}{\epsilon} \right) \cdot N_0 \right) .
\end{equation*}
In either case, no other choice of $\{\epsilon_k\}$ can asymptotically improve these rates in~\eqref{eq:cor-non-monotone-eq1} under~\eqref{eq:cor-subgrad-non-monotone-eq1}.
\end{corollary}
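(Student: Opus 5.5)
The proof follows that of Corollary~\ref{cor:smooth-non-monotone}, with the subgradient decrement bound in place of the smoothing one. Since $\Delta_0\le 2\epsilon_{N_0-1}$, Corollary~\ref{cor:non-monotone-cor} applies. Provided $\epsilon_0\le\epsilon/2$, it bounds the iteration count by
$$T_n\le \sum_{k=0}^{m-1}\frac{2\epsilon_{k+1}}{\epsilon_k}\,Q_k(2\epsilon_k,3\epsilon_{k+1}).$$
By Proposition~\ref{prop:subgrad-method} and~\eqref{eq:proof-cor-subgrad-monotone-eq1}, we have $K_{\mathtt{dec}}(\epsilon,\delta,G^{-1}(\delta))=\frac{M^2}{2\mu^{2/p}}\cdot\frac{\delta^{2/p}}{\epsilon(\delta-\frac32\epsilon)}$. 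By the derivative formula~\eqref{eq:proof-cor-smooth-monotone-eq2}, for $1\le p<2$ this function of $\delta$ is nonincreasing on $(\frac32\epsilon,\frac{3}{2-p}\epsilon]$ and nondecreasing afterwards. So the supremum over $[2\epsilon_k,3\epsilon_{k+1}]$ is attained at an endpoint, and we bound it by the sum of the two endpoint values.

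At $\delta=2\epsilon_k$ the endpoint value is $\mathcal{O}\bigl(M^2\mu^{-2/p}\epsilon_k^{2/p-2}\bigr)$. At $\delta=3\epsilon_{k+1}$, we use $3\epsilon_{k+1}-\frac32\epsilon_k\ge\frac32\epsilon_{k+1}$ and get $\mathcal{O}\bigl(M^2\mu^{-2/p}\epsilon_{k+1}^{2/p-1}\epsilon_k^{-1}\bigr)$. Multiplying by $2\epsilon_{k+1}/\epsilon_k$ gives
$$T_n=\mathcal{O}\Bigl(\frac{M^2}{\mu^{2/p}}\Bigl(\sum_{k=0}^{m-1}\frac{\epsilon_{k+1}}{\epsilon_k^{3-2/p}}+\sum_{k=0}^{m-1}\frac{\epsilon_{k+1}^{2/p}}{\epsilon_k^{2}}\Bigr)\Bigr).$$
By Lemma~\ref{lemma:number-of-processes}, the oracle complexity is at most $\max\{\hat m+1,N_0\}\,T_n$.

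For $1<p<2$, we apply Proposition~\ref{prop:sublinear-rate} to the pairs $(a_1,b_1)=(1,3-\frac2p)$ and $(a_2,b_2)=(\frac2p,2)$. Both have $b-a=2-\frac2p$, so each sum is $\mathcal{O}(\epsilon^{-(2-2/p)})$. The admissible ranges are $c<\frac{b_1}{a_1}=3-\frac2p$ and $c<\frac{b_2}{a_2}=p$. We check that $p\le 3-\frac2p$, which is equivalent to $(p-1)(p-2)\le0$ and holds on $[1,2]$. Hence $c\in(1,p)$ suffices. With $\epsilon_k=\frac{\epsilon}{2e}\exp(c^k)$ we get $\hat m=\mathcal{O}(\log\log(\Delta_0/\epsilon))$. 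Condition~\eqref{eq:cor-subgrad-non-monotone-eq1} forces $N_0=\Omega(\log\log(\Delta_0/\epsilon))$, so $\max\{\hat m+1,N_0\}=\mathcal{O}(N_0)$.

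For $p=1$, both summands become $\epsilon_{k+1}/\epsilon_k$ and $\epsilon_{k+1}^2/\epsilon_k^2$, i.e.\ of the form $(\epsilon_{k+1}/\epsilon_k)^a$. Proposition~\ref{prop:linear-rate} with $\epsilon_k=\frac\epsilon2c^k$ then gives $\mathcal{O}(\log(\Delta_0/\epsilon))$, and likewise $\max\{\hat m+1,N_0\}=\mathcal{O}(N_0)$. With $\epsilon_0\le\epsilon/2$ the output is $\epsilon$-optimal in both cases.

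For optimality, the bound~\eqref{eq:cor-non-monotone-eq1} is at least a constant times its first sum. This sum is $\Omega(\epsilon^{-(2-2/p)})$ by Proposition~\ref{prop:sublinear-rate}, and $\Omega(\log(\Delta_0/\epsilon))$ for $p=1$ by Proposition~\ref{prop:linear-rate}. Also $\max\{\hat m+1,N_0\}\ge N_0$, so neither rate can be improved asymptotically. The only delicate points are the endpoint reduction of $Q_k$ and checking that the admissible range of $c$ is exactly $(1,p)$; the rest is routine bookkeeping mirroring Corollary~\ref{cor:smooth-non-monotone}.
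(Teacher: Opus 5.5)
Your proposal is correct and follows the paper's proof essentially step for step. The shared steps are: Corollary~\ref{cor:non-monotone-cor}; the endpoint bound on $Q_k(2\epsilon_k,3\epsilon_{k+1})$ from the decrease-then-increase shape of $K_{\mathtt{dec}}(\epsilon,\delta,G^{-1}(\delta))$; the two sums with exponent pairs $(1,3-\frac{2}{p})$ and $(\frac{2}{p},2)$ fed into Propositions~\ref{prop:sublinear-rate} and~\ref{prop:linear-rate}; the check $p\le 3-\frac{2}{p}$; and $\max\{\hat m+1,N_0\}=\mathcal{O}(N_0)$. The one small difference is in the optimality claim: you lower-bound~\eqref{eq:cor-non-monotone-eq1} directly via $Q_k(2\epsilon_k,3\epsilon_{k+1})\ge K_{\mathtt{dec}}(\epsilon_k,2\epsilon_k,G^{-1}(2\epsilon_k))$, whereas the paper argues about its derived upper-bound expressions, so your version matches the statement's wording slightly more directly.
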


\begin{proof}[Proof of Corollary~\ref{cor:subgrad-non-monotone}]
The proof is similar to the proof of Corollary~\ref{cor:smooth-non-monotone}. Given $\Delta_0 \le 2\epsilon_{N_0-1}$, by Corollary~\ref{cor:non-monotone-cor}, if $\epsilon_0 \le \frac{\epsilon}{2}$, then Algorithm~\ref{algo:general-framework} computes an $\epsilon$-optimal solution no later than round $T_n\le \sum_{k=0}^{m-1} \frac{2\epsilon_{k+1}}{\epsilon_{k}} \cdot Q_k\left( 2 \epsilon_{k}, 3\epsilon_{k+1} \right)$.

Note $K_{\mathtt{dec}}(\epsilon, \delta, G^{-1}(\delta)) = \frac{M^2}{2 \mu^{\frac{2}{p}}} \cdot \frac{\delta^{\frac{2}{p}}}{ \epsilon (\delta -\frac{3}{2}\epsilon)}$, as given by~\eqref{eq:proof-cor-subgrad-monotone-eq1}. Then by~\eqref{eq:proof-cor-smooth-monotone-eq2}, for any fixed $\epsilon>0$ and $1\le p<2$, $K_{\mathtt{dec}}(\epsilon, \delta, G^{-1}(\delta))$ is nonincreasing in $\delta$ on the interval $(\frac{3}{2}\epsilon, \frac{3}{2-p}\epsilon ]$, and nondecreasing on $[\frac{3}{2-p}\epsilon, +\infty)$. Using the same technique as in~\eqref{eq:proof-cor-smooth-non-monotone-eq2}, we have
\begin{equation*}
\begin{split}
Q_k\left( 2\epsilon_{k}, 3\epsilon_{k+1} \right) &\le K_{\mathtt{dec}} \left( \epsilon_{k}, 2\epsilon_{k}, G^{-1}(2\epsilon_{k}) \right) + K_{\mathtt{dec}} \left( \epsilon_{k}, 3\epsilon_{k+1}, G^{-1}(3\epsilon_{k+1}) \right) \\
&= \frac{M^2}{2 \mu^{\frac{2}{p}}} \cdot \left( \frac{(2\epsilon_k)^{\frac{2}{p}}}{ \frac{1}{2} \epsilon_k^2 } + \frac{(3\epsilon_{k+1})^{\frac{2}{p}}}{ \epsilon_k (3\epsilon_{k+1} -\frac{3}{2}\epsilon_k)} \right).
\end{split}
\end{equation*}
So the iteration complexity is at most
\begin{equation}
\label{eq:proof-cor-subgrad-non-monotone-eq3}
\begin{split}
T_n &\le \frac{M^2}{2 \mu^{\frac{2}{p}}} \cdot \left( \sum_{k=0}^{m-1} \frac{2\epsilon_{k+1}}{\epsilon_{k}} \cdot \frac{(2\epsilon_{k})^{\frac{2}{p}} }{ \frac{1}{2} \epsilon_{k}^2} + \sum_{k=0}^{m-1} \frac{2\epsilon_{k+1}}{\epsilon_{k}} \cdot \frac{(3\epsilon_{k+1})^{\frac{2}{p}}}{ \epsilon_k (3\epsilon_{k+1} -\frac{3}{2}\epsilon_k)} \right) \\
&= \mathcal{O}\left( \frac{M^2}{ \mu^{\frac{2}{p}}} \cdot \left( \sum_{k=0}^{m-1} \frac{\epsilon_{k+1}}{\epsilon_{k}^{3-\frac{2}{p}} } + \sum_{k=0}^{m-1} \frac{\epsilon_{k+1}^{\frac{2}{p}} }{\epsilon_{k}^2} \right) \right) .
\end{split}
\end{equation}
By Lemma~\ref{lemma:number-of-processes}, the total oracle complexity is at most
\begin{equation}
\label{eq:proof-cor-subgrad-non-monotone-eq4}
\max\{\hat{m}+1, N_0\} \cdot T_n = \mathcal{O}\left( \max\{\hat{m}+1, N_0\} \cdot \frac{M^2}{ \mu^{\frac{2}{p}}} \cdot \left( \sum_{k=0}^{m-1} \frac{\epsilon_{k+1}}{\epsilon_{k}^{3-\frac{2}{p}} } + \sum_{k=0}^{m-1} \frac{\epsilon_{k+1}^{\frac{2}{p}} }{\epsilon_{k}^2} \right) \right) .
\end{equation}
For $1<p<2$, if one sets $\epsilon_k = \frac{\epsilon}{2e}\cdot \exp(c^k)$ for any $c\in (1, p)$, then $\hat{m}= \Theta\left( \log\log\left( \frac{\Delta_0}{\epsilon} \right) \right)$, $N_0 = \Omega\left( \log\log\left( \frac{\Delta_0}{\epsilon} \right) \right)$, and $\max\{ \hat{m}+1, N_0\} = \mathcal{O}(N_0)$. Consider applying Proposition~\ref{prop:sublinear-rate} to $(a_1,b_1) = (1, 3-\frac{2}{p})$ and $(a_2,b_2) = (\frac{2}{p}, 2)$, which requires $1<c< \frac{b_1}{a_1}= 3-\frac{2}{p}$ and $1<c< \frac{b_2}{a_2}= p$. Here $p\le 3-\frac{2}{p}$ since $1<p<2$. So setting $c\in (1,p)$ satisfies the requirements. By Proposition~\ref{prop:sublinear-rate}, for this choice of $\{\epsilon_k\}$, the iteration complexity and oracle complexity are respectively at most
\begin{equation}
\label{eq:proof-cor-subgrad-non-monotone-eq5}
T_n = \mathcal{O}\left( \frac{M^2}{ \mu^{\frac{2}{p}}} \cdot \frac{1}{\epsilon^{2-\frac{2}{p}}} \right) \quad\text{and}\quad \max\{\hat{m}+1, N_0\}\cdot T_n = \mathcal{O}\left( \frac{M^2}{\mu^{\frac{2}{p}} } \cdot \frac{1}{\epsilon^{2-\frac{2}{p}}} \cdot N_0 \right) ,
\end{equation}
and no other choice of $\{\epsilon_k\}$ can asymptotically improve the two rates in~\eqref{eq:proof-cor-subgrad-non-monotone-eq3} and~\eqref{eq:proof-cor-subgrad-non-monotone-eq4} beyond~\eqref{eq:proof-cor-subgrad-non-monotone-eq5} if $1<p<2$.

Similarly, for $p=1$, if one sets $\epsilon_k = \frac{\epsilon}{2}\cdot c^k$ for any $c>1$, then $\hat{m}= \Theta\left( \log\left( \frac{\Delta_0}{\epsilon} \right) \right)$, $N_0 = \Omega\left( \log\left( \frac{\Delta_0}{\epsilon} \right) \right)$, and $\max\{ \hat{m}+1, N_0\} = \mathcal{O}(N_0)$. Then by Proposition~\ref{prop:linear-rate}, the iteration complexity and oracle complexity are respectively at most
\begin{equation}
\label{eq:proof-cor-subgrad-non-monotone-eq6}
T_n= \mathcal{O}\left( \frac{M^2}{ \mu^{\frac{2}{p}}} \cdot \log\left( \frac{\Delta_0}{\epsilon} \right) \right) \quad\text{and}\quad \max\{ \hat{m}+1, N_0\} \cdot T_n = \mathcal{O}\left( \frac{M^2}{ \mu^{\frac{2}{p}}} \cdot \log\left( \frac{\Delta_0}{\epsilon} \right) \cdot N_0 \right) ,
\end{equation}
and no other choice of $\{\epsilon_k\}$ can asymptotically improve the two rates in~\eqref{eq:proof-cor-subgrad-non-monotone-eq3} and~\eqref{eq:proof-cor-subgrad-non-monotone-eq4} beyond~\eqref{eq:proof-cor-subgrad-non-monotone-eq6} if $p=1$.
\end{proof}

%% file: appendix.tex
\section{Derivations for Decrement Guarantees}
\label{Sect:fom-decrement}

In this section, we derive the explicit forms of $K_{\mathtt{dec}}(\epsilon, \delta, D)$ for the three examples considered in Section~\ref{Sect:cor-monotone}.

\begin{proposition}[Accelerated method]
\label{prop:accel-method}
Suppose $f$ is convex and $L$-smooth. For the accelerated method in~\eqref{eq:accel}, Assumption~\ref{assumption:decrement-for-fom} holds with
\begin{equation*}
K_{\mathtt{dec}}(\epsilon, \delta, D) = \sqrt{\frac{2LD^2}{\delta-\epsilon}}.
\end{equation*}
\end{proposition}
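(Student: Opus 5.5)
The plan is to invoke the classical convergence guarantee for the accelerated method~\eqref{eq:accel} (Nesterov/FISTA with $\theta$-sequence and stepsize $1/L$):
\begin{equation*}
f(x^{(n)}) - f_\star \le \frac{2L\,\operatorname{dist}(x^{(0)},X_\star)^2}{(n+1)^2}\qquad \forall n\ge 1,
\end{equation*}
which follows from the standard estimate-sequence/Lyapunov argument. One shows that $E_k = (\theta^{(k)})^2 (f(x^{(k)})-f_\star) + \frac{L}{2}\|v^{(k)}-x_\star\|^2$ is nonincreasing for a suitable auxiliary sequence $v^{(k)}$, and uses $\theta^{(k)}\ge (k+1)/2$. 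Since $x^{(0)}$ is arbitrary and $x_\star$ can be taken as the projection of $x^{(0)}$ onto $X_\star$ (which is nonempty and closed), we get the bound with $\operatorname{dist}(x^{(0)},X_\star)$. I would cite this bound from~\cite{Beck2009fista} rather than rederive it; checking that the first step with $\theta^{(0)}=1$ matches their indexing is the only bookkeeping subtlety.

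Next I verify~\eqref{eq:assum-decrement-for-fom}. Assume $f(x^{(0)})-f_\star\ge\delta$ with $\delta \ge 2\epsilon$, and $\operatorname{dist}(x^{(0)},X_\star)\le D$. Let $K=K_{\mathtt{dec}}(\epsilon,\delta,D)=\sqrt{2LD^2/(\delta-\epsilon)}$. If $K<1$, I claim the statement still holds. Indeed, take $n=\lceil K\rceil$ when $K\ge 1$. Then $(n+1)^2 > K^2 = 2LD^2/(\delta-\epsilon)$, so
\begin{equation*}
f(x^{(n)})-f_\star < \delta-\epsilon \le f(x^{(0)})-f_\star-\epsilon ,
\end{equation*}
which is the desired decrease. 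The concern is that $n=\lceil K\rceil$ may exceed $K$, while the assumption requires an index $k\le K$.

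To handle this, I use the stronger $(n+1)^2$ denominator. Let $n=\lfloor K\rfloor$, so that $n+1>K$ and $(n+1)^2>K^2$. If $n\ge 1$, the bound gives $f(x^{(n)})-f_\star<\delta-\epsilon$ with $n\le K$. If $n=0$, then $K<1$, i.e.\ $2LD^2<\delta-\epsilon$. In this case I apply the bound with $n=1$ using the single gradient step from $y^{(0)}=x^{(0)}$. By the descent lemma and convexity, $f(x^{(1)})-f_\star\le \frac{L}{2}D^2$. I still need index $1\le K$, which fails here. So instead I argue directly from $x^{(0)}$: smoothness gives $f(x^{(0)})-f_\star\le \frac{L}{2}D^2<\delta-\epsilon<\delta$, contradicting the hypothesis $f(x^{(0)})-f_\star\ge\delta$. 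Hence the case $K<1$ is vacuous (at least when $\nabla f(x_\star)=0$, i.e.\ unconstrained, which holds here since the problem is over $\R^d$).

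Thus the main obstacle is only this integer-rounding edge case. It is resolved by the fact that the rate bound carries $(n+1)^2$ rather than $n^2$, together with the trivial smoothness upper bound at $x^{(0)}$. The remaining steps are direct substitution. Finally, $K_{\mathtt{dec}}$ is finite and nonnegative, and is monotone in $\delta$ and $D$ as required.
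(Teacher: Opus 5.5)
Your proposal is correct and follows the same route as the paper: cite the Beck--Teboulle bound $f(x^{(n)})-f_\star\le 2L\operatorname{dist}(x^{(0)},X_\star)^2/(n+1)^2$ and take $n=\lfloor K\rfloor$, so that $(n+1)^2>K^2$. Your extra step for $K<1$, using $f(x^{(0)})-f_\star\le \frac{L}{2}D^2<\delta$ to show that case cannot occur, fills in an edge case the paper leaves implicit.
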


\begin{proof}
We have the standard result (see~\cite[Theorem 4.4]{Beck2009fista} for example), $f(x^{(n)})-f_\star \le \frac{2L \operatorname{dist}(x^{(0)}, X_\star)^2}{(n+1)^2}$, which implies that a decrement of $\epsilon$ is reached within $\left\lfloor \sqrt{ \frac{2LD^2}{\delta-\epsilon} } \right\rfloor$ iterations.
\end{proof}

\begin{proposition}[Smoothing method]
\label{prop:smooth-method}
Suppose an $(\alpha,\beta)$-convex smoothing family of $f$ is known. For the smoothing method discussed in Section~\ref{Subsec:smooth-monotone}, Assumption~\ref{assumption:decrement-for-fom} holds with
\begin{equation*}
K_{\mathtt{dec}}(\epsilon, \delta, D) = \sqrt{\frac{16\alpha\beta D^2}{\epsilon(\delta-\frac{3}{2}\epsilon)}} .
\end{equation*}
\end{proposition}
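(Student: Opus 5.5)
The approach is to reduce to the accelerated-method guarantee of Proposition~\ref{prop:accel-method} applied to the smoothed function $f_\eta$ with $\eta=\frac{\epsilon}{4\beta}$. By definition of an $(\alpha,\beta)$-convex smoothing family, $f_\eta$ is convex and $L_\eta$-smooth with $L_\eta=\frac{\alpha}{\eta}=\frac{4\alpha\beta}{\epsilon}$. We also have the sandwich $f\le f_\eta\le f+\beta\eta=f+\frac{\epsilon}{4}$. In particular, the minimal value $f_{\eta,\star}$ of $f_\eta$ satisfies $f_\star\le f_{\eta,\star}\le f_\star+\frac{\epsilon}{4}$.

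Fix $\delta\ge 2\epsilon$, $D>0$ and an initialization with $f(x^{(0)})-f_\star\ge\delta$ and $\operatorname{dist}(x^{(0)},X_\star)\le D$. We need a distance to a minimizer of $f_\eta$, since the accelerated rate is stated relative to $f_\eta$'s own minimizers, which may be empty or differ from $X_\star$. I will avoid this by using the standard form of the accelerated bound against an arbitrary reference point $x_\star\in X_\star$:
\[
f_\eta(x^{(n)})-f_\eta(x_\star)\le \frac{2L_\eta\|x^{(0)}-x_\star\|_2^2}{(n+1)^2}.
\]
This holds for FISTA/Nesterov with any comparison point (see~\cite[Theorem 4.4]{Beck2009fista}; the proof only uses convexity against the chosen point). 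Choosing $x_\star$ as the projection of $x^{(0)}$ onto $X_\star$ gives $\|x^{(0)}-x_\star\|_2\le D$. This is the step I expect to need care: one must check that the cited rate is valid with an arbitrary comparator, rather than only with $\operatorname{argmin} f_\eta$. If needed, I would reprove it quickly via the usual estimate-sequence/Lyapunov argument.

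Next I translate back to $f$. The chain $f(x^{(n)})\le f_\eta(x^{(n)})$ and $f_\eta(x_\star)\le f_\star+\frac{\epsilon}{4}$ gives
\[
f(x^{(n)})-f_\star\le \frac{\epsilon}{4}+\frac{8\alpha\beta D^2}{\epsilon(n+1)^2}.
\]
We want $f(x^{(n)})\le f(x^{(0)})-\epsilon$. Since $f(x^{(0)})-\epsilon\ge f_\star+\delta-\epsilon$, it suffices to have
\[
\frac{8\alpha\beta D^2}{\epsilon(n+1)^2}\le \delta-\tfrac{5}{4}\epsilon.
\]
This holds whenever $(n+1)^2\ge\frac{8\alpha\beta D^2}{\epsilon(\delta-\frac54\epsilon)}$. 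Since $\delta-\frac32\epsilon\le\delta-\frac54\epsilon$ and $\delta\ge2\epsilon$ keeps $\delta-\frac32\epsilon>0$, it is enough to take $n=\left\lfloor\sqrt{\frac{16\alpha\beta D^2}{\epsilon(\delta-\frac32\epsilon)}}\right\rfloor$. For this $n$ we have $(n+1)^2>\frac{16\alpha\beta D^2}{\epsilon(\delta-\frac32\epsilon)}$, which exceeds the required threshold by a factor of 2, so there is slack.

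Hence the minimum over $0\le k\le K_{\mathtt{dec}}(\epsilon,\delta,D)$ of $f(x^{(k)})$ is at most $f(x^{(0)})-\epsilon$, which is Assumption~\ref{assumption:decrement-for-fom} with the stated $K_{\mathtt{dec}}$. The constants are deliberately loose, and the stated $\frac32\epsilon$ offset and factor $16$ comfortably absorb the $\frac{\epsilon}{4}$ smoothing bias.
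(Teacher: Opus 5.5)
Your proof is correct. It takes a more direct route than the paper's.

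\textbf{How the paper argues.} The paper does not use a comparator form of the accelerated rate. It cites the bound from \cite[Appendix B]{Renegar2022restart}: the accelerated method on $f_\eta$ reaches $f_\eta(x^{(k)})\le f_{\eta,\star}+\hat\epsilon$ within $2\operatorname{dist}(x^{(0)},X_\eta(\hat\epsilon/2))\sqrt{\alpha/(\eta\hat\epsilon)}$ iterations, where $X_\eta(t)$ is the $t$-sublevel set of $f_\eta$. It then picks the adaptive accuracy $\hat\epsilon=f_\eta(x^{(0)})-f_{\eta,\star}-\frac54\epsilon\ge\delta-\frac32\epsilon$, so that hitting it forces $f_\eta$, and hence $f$, to drop enough. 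Finally it shows $X_\star\subseteq X_\eta(\hat\epsilon/2)$ via $f_\eta(x_\star)\le f_{\eta,\star}+\frac{\epsilon}{4}\le f_{\eta,\star}+\frac{\hat\epsilon}{2}$, which replaces the sublevel-set distance by $D$.

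\textbf{How your route compares.} You use the same underlying fact, that points of $X_\star$ are $\frac{\epsilon}{4}$-optimal for $f_\eta$. But you plug $x_\star$ in directly as the comparison point. This avoids sublevel sets and the tuned $\hat\epsilon$. It also gives the sharper sufficient count $\sqrt{8\alpha\beta D^2/(\epsilon(\delta-\frac54\epsilon))}$ before you relax it. The paper's route instead lets it cite a published lemma verbatim rather than re-inspecting the FISTA proof.

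\textbf{The step you flagged.} It does go through. In the analysis behind \cite[Theorem 4.4]{Beck2009fista}, the basic proximal-gradient inequality is applied against an arbitrary point. The recursion in \eqref{eq:accel} satisfies $(\theta^{(k+1)})^2-\theta^{(k+1)}=(\theta^{(k)})^2$ with equality, so no sign condition on $f_\eta(x^{(k)})-f_\eta(x_\star)$ is needed. Hence $f_\eta(x^{(n)})-f_\eta(x_\star)\le 2L_\eta\|x^{(0)}-x_\star\|_2^2/(n+1)^2$ for every $n\ge 1$.

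\textbf{A small edge case.} That inequality is only available for $n\ge 1$. At $n=0$ it can fail, since $x_\star$ need not minimize $f_\eta$. So you should rule out $\lfloor\sqrt{16\alpha\beta D^2/(\epsilon(\delta-\frac32\epsilon))}\rfloor=0$. This is quick:
\begin{itemize}
\item With $L_\eta=4\alpha\beta/\epsilon$, we have $\|\nabla f_\eta(x_\star)\|_2^2\le 2L_\eta(f_\eta(x_\star)-\inf f_\eta)\le 2\alpha\beta$.
\item Smoothness of $f_\eta$ at $x_\star$ then gives $\delta\le f(x^{(0)})-f_\star\le \sqrt{2\alpha\beta}\,D+\frac{2\alpha\beta D^2}{\epsilon}+\frac{\epsilon}{4}$.
\item If $16\alpha\beta D^2<\epsilon(\delta-\frac32\epsilon)$, then AM--GM on the first term shows the right-hand side is below $\frac34\epsilon+\frac{3}{16}(\delta-\frac32\epsilon)<\delta$, a contradiction.
\end{itemize}
The paper's proof does not address this edge case in its cited iteration bound either.
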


\begin{proof}[Proof of Proposition~\ref{prop:smooth-method}]
As in Assumption~\ref{assumption:decrement-for-fom}, we consider $f(x^{(0)}) - f_\star \ge \delta \ge 2\epsilon >0$ and $\operatorname{dist}(x^{(0)}, X_\star) \le D$, where $f_\star$ and $X_\star$ are the minimum value and the set of minimizers of $f$ as defined throughout the paper.

Let $\eta = \frac{\epsilon}{4\beta}$ and $f_{\eta,\star} = \min_{x} f_\eta(x)$, where $f_\eta$ is given by the $(\alpha,\beta)$-convex smoothing family. Denote the sublevel set $X_{\eta}(t) := \{x: f_\eta(x)\le f_{\eta,\star} +t\}$. Recall that 
\begin{equation}
\label{eq:proof-prop-smooth-method-eq1}
f(x)\le f_\eta(x) \le f(x)+\beta\eta = f(x)+\frac{\epsilon}{4}, \quad\forall x.
\end{equation}
Then we have $f_\eta(x^{(0)}) \ge f(x^{(0)})$ and $f_{\eta,\star} \le f_\eta(x_\star) \le f(x_\star)+ \frac{\epsilon}{4}$ for any $x_\star \in X_\star$. Combining these two inequalities gives
\begin{equation}
\label{eq:proof-prop-smooth-method-eq2}
f_\eta(x^{(0)}) - f_{\eta,\star} \ge f(x^{(0)}) - (f_\star + \frac{\epsilon}{4}) \ge \delta - \frac{\epsilon}{4}.
\end{equation}
Consider applying the accelerated method (initialized at $x^{(0)}$) to the $(\frac{\alpha}{\eta})$-smooth convex function $f_\eta$. Following a modified analysis of~\cite[Theorem 4.4]{Beck2009fista} (see~\cite[Appendix B]{Renegar2022restart}), for any $\hat{\epsilon} >0$, this method returns an iterate $x^{(k)}$ satisfying $f_\eta(x^{(k)}) \le f_{\eta,\star} + \hat{\epsilon}$ with iterations at most
\begin{equation}
\label{eq:proof-prop-smooth-method-eq3}
k\le 2\operatorname{dist}(x^{(0)}, X_{\eta}(\hat{\epsilon}/2)) \sqrt{\frac{\alpha}{\eta}\cdot \frac{1}{\hat{\epsilon}}} .
\end{equation}
Let $\hat{\epsilon} = f_\eta(x^{(0)}) - f_{\eta,\star} - \frac{5}{4}\epsilon$. From~\eqref{eq:proof-prop-smooth-method-eq2}, we have
\begin{equation}
\label{eq:proof-prop-smooth-method-eq4}
\hat{\epsilon}\ge (\delta- \frac{1}{4}\epsilon) - \frac{5}{4}\epsilon = \delta- \frac{3}{2}\epsilon \ge \frac{1}{2}\epsilon >0.
\end{equation}
We fix this specific choice of $\hat{\epsilon}$ in the remainder of the proof. Then~\eqref{eq:proof-prop-smooth-method-eq3} implies $f_\eta(x^{(k)}) \le f_\eta(x^{(0)})- \frac{5}{4}\epsilon$ with iterations at most
\begin{equation}
\label{eq:proof-prop-smooth-method-eq5}
k\le 2\operatorname{dist}(x^{(0)}, X_{\eta}(\hat{\epsilon}/2)) \sqrt{\frac{\alpha}{\eta} \cdot \frac{1}{f_\eta(x^{(0)}) - f_{\eta,\star} - \frac{5}{4}\epsilon}} \le 2\operatorname{dist}(x^{(0)}, X_{\eta}(\hat{\epsilon}/2)) \sqrt{\frac{4\alpha\beta}{\epsilon (\delta - \frac{3}{2}\epsilon)}} ,
\end{equation}
where the last step is by~\eqref{eq:proof-prop-smooth-method-eq2}.
By the properties in~\eqref{eq:proof-prop-smooth-method-eq1}, we have $f(x^{(0)}) - f(x^{(k)}) \ge (f_\eta(x^{(0)}) - \frac{1}{4}\epsilon) - f_\eta(x^{(k)})$, which further implies
\begin{equation}
\label{eq:proof-prop-smooth-method-eq6}
f_\eta(x^{(k)}) \le f_\eta(x^{(0)})- \frac{5}{4}\epsilon \quad\implies\quad f(x^{(k)}) \le f(x^{(0)}) - \epsilon.
\end{equation}
Consider an arbitrary fixed point $x_\star\in X_\star$. Using the properties in~\eqref{eq:proof-prop-smooth-method-eq1} and the optimality of $x_\star$, we obtain
\begin{equation*}
f_\eta(x_\star) \le f(x_\star) + \frac{\epsilon}{4} \le f(x)+ \frac{\epsilon}{4} \le f_\eta(x)+ \frac{\epsilon}{4}, \quad\forall x.
\end{equation*}
By taking the infimum over $x$ and using~\eqref{eq:proof-prop-smooth-method-eq4}, we have that $f_\eta(x_\star) \le f_{\eta,\star}+ \frac{\epsilon}{4} \le f_{\eta,\star}+ \frac{\hat{\epsilon}}{2}$, which implies $x_\star \in X_\eta(\hat{\epsilon}/2)$. This further implies $X_\star \subseteq X_\eta(\hat{\epsilon}/2)$ since $x_\star$ is arbitrary. So
\begin{equation}
\label{eq:proof-prop-smooth-method-eq7}
\operatorname{dist}(x^{(0)}, X_{\eta}(\hat{\epsilon}/2)) \le \operatorname{dist}(x^{(0)}, X_{\star}) . 
\end{equation}
Combining~\eqref{eq:proof-prop-smooth-method-eq6} and~\eqref{eq:proof-prop-smooth-method-eq7} with the statement in~\eqref{eq:proof-prop-smooth-method-eq5}, we obtain the desired result: the aforementioned method returns an iterate $x^{(k)}$ satisfying $f(x^{(k)}) \le f(x^{(0)}) - \epsilon$ with iterations at most
\begin{equation*}
k\le 2 \operatorname{dist}(x^{(0)}, X_{\star}) \sqrt{\frac{4\alpha\beta}{\epsilon (\delta - \frac{3}{2}\epsilon)}} \le \sqrt{\frac{16\alpha\beta D^2}{\epsilon(\delta-\frac{3}{2}\epsilon)}}.
\end{equation*}
\end{proof}

\begin{proposition}[Subgradient method]
\label{prop:subgrad-method}
Suppose $f$ is convex and $M$-Lipschitz. 
For the subgradient method discussed in Section~\ref{Subsec:subgrad-monotone}, Assumption~\ref{assumption:decrement-for-fom} holds with
\begin{equation*}
K_{\mathtt{dec}}(\epsilon, \delta, D) = \frac{M^2 D^2}{2\epsilon (\delta -\frac{3}{2}\epsilon)}.
\end{equation*}
\end{proposition}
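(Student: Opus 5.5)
We need to prove the subgradient decrement guarantee: if $f(x^{(0)})-f_\star\ge\delta\ge2\epsilon$ and $\operatorname{dist}(x^{(0)},X_\star)\le D$, then within $K=\frac{M^2D^2}{2\epsilon(\delta-\frac32\epsilon)}$ iterations of $x^{(i+1)}=x^{(i)}-\frac{\epsilon}{\|g^{(i)}\|_2^2}g^{(i)}$ some iterate satisfies $f(x^{(k)})\le f(x^{(0)})-\epsilon$. The plan is the standard Polyak-type distance recursion, argued by contradiction.

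\textbf{Setup.} Suppose that for all $0\le i\le K$ we have $f(x^{(i)})>f(x^{(0)})-\epsilon$. Then $f(x^{(i)})-f_\star>\delta-\epsilon>0$, so no $g^{(i)}$ vanishes; if some $g^{(i)}=0$, that iterate is a minimizer and the decrement holds trivially. Fix $x_\star$ as the projection of $x^{(0)}$ onto $X_\star$, which is closed and convex. Expanding the update and using convexity, $\langle g^{(i)},x^{(i)}-x_\star\rangle\ge f(x^{(i)})-f_\star$, gives
\begin{equation*}
\|x^{(i+1)}-x_\star\|_2^2\le\|x^{(i)}-x_\star\|_2^2-\frac{2\epsilon\,(f(x^{(i)})-f_\star)}{\|g^{(i)}\|_2^2}+\frac{\epsilon^2}{\|g^{(i)}\|_2^2}.
\end{equation*}

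\textbf{Per-step decrease.} Under the contradiction hypothesis, $f(x^{(i)})-f_\star>\delta-\epsilon$. Hence
\begin{equation*}
2\epsilon(f(x^{(i)})-f_\star)-\epsilon^2>2\epsilon(\delta-\epsilon)-\epsilon^2=2\epsilon\left(\delta-\tfrac32\epsilon\right)>0,
\end{equation*}
where positivity uses $\delta\ge2\epsilon$. Since $\|g^{(i)}\|_2\le M$ by the Lipschitz assumption, each step therefore decreases the squared distance by more than $\frac{2\epsilon(\delta-\frac32\epsilon)}{M^2}$.

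\textbf{Telescoping.} Summing over $i=0,\dots,k-1$ gives
\begin{equation*}
0\le\|x^{(k)}-x_\star\|_2^2<D^2-k\cdot\frac{2\epsilon(\delta-\frac32\epsilon)}{M^2}.
\end{equation*}
The hypothesis applies to every $i\le K$, so we may take $k$ to be the integer $\lceil K\rceil$ or $\lfloor K\rfloor$ as appropriate. With $k\ge K$ the right-hand side is at most $0$, which contradicts $0\le\|x^{(k)}-x_\star\|_2^2$.

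\textbf{Main obstacle.} The delicate point is the integer indexing: $K$ need not be an integer, while the assumption guarantees only indices $0\le k\le K$, so steps are available only up to $\lfloor K\rfloor$. To handle this, I would use the strict inequality in the per-step decrease together with $\lfloor K\rfloor+1>K$. Concretely, if no decrement occurs among $x^{(0)},\dots,x^{(\lfloor K\rfloor)}$, then $\lfloor K\rfloor+1$ steps are each valid, which yields $\|x^{(\lfloor K\rfloor+1)}-x_\star\|_2^2<D^2-(\lfloor K\rfloor+1)\frac{2\epsilon(\delta-\frac32\epsilon)}{M^2}<0$, a contradiction. If the intended convention instead requires the conclusion on indices $\le\lfloor K\rfloor$ including the final point, this route still concludes that the decrement must occur at some index $\le\lfloor K\rfloor$, possibly with slack; any off-by-one is absorbed by this margin.
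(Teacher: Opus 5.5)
Your proof is correct and follows essentially the same route as the paper: the Polyak-type distance recursion gives a per-step decrease of at least $\frac{2\epsilon}{M^2}(\delta-\frac32\epsilon)$, and telescoping yields a contradiction once more than $K$ consecutive steps pass without an $\epsilon$-decrement. The differences are cosmetic. You fix $x_\star$ as the projection of $x^{(0)}$, whereas the paper takes an arbitrary minimizer and passes to $\operatorname{dist}(x^{(0)},X_\star)$ at the end. You also spell out the $\lfloor K\rfloor+1>K$ indexing, which the paper encodes as $n+1\le K$.
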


\begin{proof}[Proof of Proposition~\ref{prop:subgrad-method}]
As in Assumption~\ref{assumption:decrement-for-fom}, we consider $f(x^{(0)}) - f_\star \ge \delta \ge 2\epsilon >0$ and $\operatorname{dist}(x^{(0)}, X_\star) \le D$. 
Consider an arbitrary point $x_\star \in X_\star$. If $g^{(i)} \ne 0$, then $x^{(i+1)} = x^{(i)} - \frac{\epsilon}{\|g^{(i)}\|_2^2} g^{(i)}$, which implies
\begin{equation*}
\begin{split}
\|x^{(i+1)} - x_\star\|_2^2 &= \left\|x^{(i)} - x_\star - \frac{\epsilon}{\|g^{(i)}\|_2^2} g^{(i)} \right\|_2^2 = \|x^{(i)} - x_\star\|_2^2 + \frac{2\epsilon}{\|g^{(i)}\|_2^2} \langle x_\star - x^{(i)}, g^{(i)} \rangle + \frac{\epsilon^2}{\|g^{(i)}\|_2^2} \\
&\le \|x^{(i)} - x_\star\|_2^2 + \frac{2\epsilon}{\|g^{(i)}\|_2^2} (f(x_\star) - f(x^{(i)})) + \frac{\epsilon^2}{\|g^{(i)}\|_2^2} \\
&= \|x^{(i)} - x_\star\|_2^2 - \frac{2\epsilon}{\|g^{(i)}\|_2^2} \left( f(x^{(i)}) - f_\star - \frac{\epsilon}{2} \right) .
\end{split}
\end{equation*}
Suppose $g^{(i)} \ne 0$ and $f(x^{(i)})> f(x^{(0)})-\epsilon$ for all $i=0,\dots, n$ for some $n$. Then $f(x^{(i)}) - f_\star - \frac{\epsilon}{2} > f(x^{(0)}) - f_\star - \frac{3}{2}\epsilon \ge \delta- \frac{3}{2}\epsilon >0$. 
The following holds for all $0\le i\le n$.
\begin{equation*}
\|x^{(i+1)} - x_\star\|_2^2 \le \|x^{(i)} - x_\star\|_2^2 - \frac{2\epsilon}{\|g^{(i)}\|_2^2} \left( f(x^{(i)}) - f_\star - \frac{\epsilon}{2} \right) \le \|x^{(i)} - x_\star\|_2^2 - \frac{2\epsilon}{M^2} \left( \delta- \frac{3}{2}\epsilon \right).
\end{equation*}
Summing over $i = 0,\dots, n$ gives
\begin{equation*}
\|x^{(0)} - x_\star\|_2^2 \ge \sum_{i=0}^{n} \left( \|x^{(i)} - x_\star\|_2^2 - \|x^{(i+1)} - x_\star\|_2^2 \right) \ge \frac{2\epsilon (n+1)}{M^2} \left( \delta- \frac{3}{2}\epsilon \right).
\end{equation*}
So $n+1 \le \frac{M^2 \|x^{(0)} - x_\star\|_2^2}{2\epsilon (\delta -\frac{3}{2}\epsilon)} $.
Since this holds for any $x_\star \in X_\star$, we have $n+1 \le \frac{M^2 \operatorname{dist}(x^{(0)}, X_\star)^2}{2\epsilon (\delta -\frac{3}{2}\epsilon)} \le \frac{M^2 D^2}{2\epsilon (\delta -\frac{3}{2}\epsilon)} $. Therefore, 
\begin{equation*}
\min\left\{ f(x^{(i)}): 0\le i\le \frac{M^2 D^2}{2\epsilon (\delta -\frac{3}{2}\epsilon)} \right\} \le f(x^{(0)}) -\epsilon. \qedhere
\end{equation*}
\end{proof}

\section{Deferred Proofs}
\label{Sect:technical-propositions}

In this section, we prove the linear rate in Proposition~\ref{prop:linear-rate} and the sublinear rate in Proposition~\ref{prop:sublinear-rate}. For each of these propositions, we show the upper-bound attainability and the matching lower bounds separately.

\subsection{Proof of Proposition~\ref{prop:linear-rate}}

We divide the proof into the following two propositions. Proposition~\ref{prop:attainable-linear-rate} verifies that the choice $\epsilon_k = \frac{\epsilon}{2}\cdot c_0^k$ attains the two claimed rates in Proposition~\ref{prop:linear-rate}. Proposition~\ref{prop:lowerbound-linear-rate} establishes the lower bounds for the two minimization problems in Proposition~\ref{prop:linear-rate}. Note $m\le \hat{m}$ from~\eqref{eq:m}, so Proposition~\ref{prop:lowerbound-linear-rate} gives a valid lower bound despite a slight difference in the statement.

\begin{proposition}
\label{prop:attainable-linear-rate}
Consider any fixed $a>0$ and $0<\epsilon< \Delta_0$. Set $\epsilon_k = \frac{\epsilon}{2}\cdot c_0^k$ for any $c_0>1$. If $m$ and $\hat{m}$ are defined by~\eqref{eq:m}, then
\begin{equation*}
\sum_{k=0}^{m-1} \frac{\epsilon_{k+1}^a}{\epsilon_{k}^a} = \mathcal{O}\left( \log \left(\frac{\Delta_0}{\epsilon} \right) \right) , \quad\text{and}\quad (\hat{m}+1) \cdot \sum_{k=0}^{m-1} \frac{\epsilon_{k+1}^a}{\epsilon_{k}^a} = \mathcal{O}\left( \left( \log \left(\frac{\Delta_0}{\epsilon} \right) \right)^2 \right) .
\end{equation*}
\end{proposition}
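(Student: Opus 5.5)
With the geometric choice $\epsilon_k=\frac{\epsilon}{2}c_0^k$, every ratio in the sum is the same constant: $\frac{\epsilon_{k+1}^a}{\epsilon_k^a}=c_0^a$ for all $k$. Hence
\[
\sum_{k=0}^{m-1}\frac{\epsilon_{k+1}^a}{\epsilon_k^a}=m\,c_0^a .
\]
Both bounds in the statement therefore reduce to controlling the indices $m$ and $\hat m$ defined in~\eqref{eq:m}. Since $\epsilon_k$ is strictly increasing and $\frac{\Delta_0}{2}<\Delta_0$, we have $m\le\hat m$. It therefore suffices to prove $\hat m=\mathcal{O}(\log(\Delta_0/\epsilon))$.

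To bound $\hat m$, note that $\hat m$ is the least $k$ with $\frac{\epsilon}{2}c_0^k>\Delta_0$, that is, $k>\log_{c_0}(2\Delta_0/\epsilon)$. Since $\epsilon_k\to\infty$, this index is finite, and we get
\[
\hat m\le \log_{c_0}\!\left(\frac{2\Delta_0}{\epsilon}\right)+1=\frac{\log(\Delta_0/\epsilon)+\log 2}{\log c_0}+1 .
\]
The only point needing care is converting this additive form into a pure $\mathcal{O}(\log(\Delta_0/\epsilon))$. Because $\mathcal{O}(\cdot)$ in this paper refers to $\epsilon\to 0$, the ratio $\Delta_0/\epsilon$ becomes large. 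Once $\Delta_0/\epsilon\ge e$, say, the constants $\log 2$ and $1+\frac{1}{\log c_0}$ are each absorbed into a constant multiple of $\log(\Delta_0/\epsilon)$. The hidden constant then depends only on $c_0$.

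Combining these facts gives
\[
\sum_{k=0}^{m-1}\frac{\epsilon_{k+1}^a}{\epsilon_k^a}=c_0^a m\le c_0^a(\hat m+1)=\mathcal{O}\bigl(\log(\Delta_0/\epsilon)\bigr),
\]
where the constant now depends on $a$ and $c_0$. Multiplying by $\hat m+1=\mathcal{O}(\log(\Delta_0/\epsilon))$ gives the squared bound.

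There is no real obstacle here. The only subtlety is the regime of small $\Delta_0/\epsilon$, where $\log(\Delta_0/\epsilon)$ is near zero. This is handled by the asymptotic convention, consistent with the ``sufficiently large ratio'' clause of Proposition~\ref{prop:linear-rate}.
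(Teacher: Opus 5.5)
Your proposal is correct and follows essentially the same route as the paper: both observe that the ratio is the constant $c_0^a$, so the sum equals $c_0^a m$, and both bound $\hat{m}\le \log(2\Delta_0/\epsilon)/\log c_0+1$. The only cosmetic difference is that you control $m$ through $m\le\hat{m}$, whereas the paper computes $m=\lceil \log(\Delta_0/\epsilon)/\log c_0\rceil$ directly.
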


\begin{proof}[Proof of Proposition~\ref{prop:attainable-linear-rate}]
For any $k\ge 0$,
\begin{equation}
\label{eq:proof-prop-attainable-linear-rate-eq1}
\frac{\epsilon_{k+1}^a}{\epsilon_{k}^a} = \frac{(\frac{\epsilon}{2})^a \cdot c_0^{a(k+1)}}{ (\frac{\epsilon}{2})^a \cdot c_0^{ak}} = c_0^a . 
\end{equation}
We also need to control $m$ and $\hat{m}$. By~\eqref{eq:m}, $m= \inf\{k: \epsilon_k \ge \Delta_0/2 \}$ and $\hat{m}= \inf\{k: \epsilon_k > \Delta_0 \}$, so
\begin{equation}
\label{eq:proof-prop-attainable-linear-rate-eq2}
m = \inf \left\{k: c_0^k \cdot \frac{\epsilon}{2} \ge \frac{\Delta_0}{2} \right\} = \left\lceil \frac{\log (\frac{\Delta_0}{\epsilon})}{\log c_0}\right\rceil = \mathcal{O}\left( \log \left(\frac{\Delta_0}{\epsilon} \right) \right) ,
\end{equation}
\begin{equation}
\label{eq:proof-prop-attainable-linear-rate-eq3}
\hat{m} = \inf \left\{k: c_0^k \cdot \frac{\epsilon}{2} > \Delta_0 \right\} \le \frac{\log (\frac{2\Delta_0}{\epsilon})}{\log c_0}+1 = \mathcal{O}\left( \log \left(\frac{\Delta_0}{\epsilon} \right) \right) .
\end{equation}
Combining~\eqref{eq:proof-prop-attainable-linear-rate-eq1}--\eqref{eq:proof-prop-attainable-linear-rate-eq3} gives $\sum_{k=0}^{m-1} \frac{\epsilon_{k+1}^a}{\epsilon_{k}^a} = c_0^a \cdot m = \mathcal{O}\left( \log \left(\frac{\Delta_0}{\epsilon} \right) \right)$, and $(\hat{m}+1) \sum_{k=0}^{m-1} \frac{\epsilon_{k+1}^a}{\epsilon_{k}^a} = c_0^a \cdot m (\hat{m}+1) = \mathcal{O}\left( \left( \log \left(\frac{\Delta_0}{\epsilon} \right) \right)^2 \right)$.
\end{proof}

\begin{proposition}
\label{prop:lowerbound-linear-rate}
Consider any $a>0$ and $0<\epsilon< \Delta_0$. The following holds for any integer $m\ge 1$ and any positive $\epsilon_0,\dots,\epsilon_m$ satisfying $\epsilon_0\le \frac{\epsilon}{2}$ and $\epsilon_m \ge \frac{\Delta_0}{2}$.
\begin{equation*}
\sum_{k=0}^{m-1} \frac{\epsilon_{k+1}^a}{\epsilon_{k}^a} \ge (ea) \log\left( \frac{\Delta_0}{\epsilon} \right), \quad\text{and}\quad m \sum_{k=0}^{m-1} \frac{\epsilon_{k+1}^a}{\epsilon_{k}^a} \ge \frac{e^2 a^2}{4} \left( \log \left( \frac{\Delta_0}{\epsilon} \right) \right)^2 .
\end{equation*}
\end{proposition}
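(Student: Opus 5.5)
Set $x_k := a\log(\epsilon_{k+1}/\epsilon_k)$ for $k=0,\dots,m-1$, so each summand is $\epsilon_{k+1}^a/\epsilon_k^a = e^{x_k}$. The sum telescopes:
\begin{equation*}
\sum_{k=0}^{m-1} x_k = a\log(\epsilon_m/\epsilon_0) \ge a\log\left(\frac{\Delta_0/2}{\epsilon/2}\right) = a\log\left(\frac{\Delta_0}{\epsilon}\right) =: S,
\end{equation*}
where we use $\epsilon_0\le \epsilon/2$ and $\epsilon_m\ge \Delta_0/2$. Note $S>0$ since $\epsilon<\Delta_0$. We never use monotonicity of the $\epsilon_k$, only positivity. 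Some $x_k$ may be negative, which the argument tolerates.

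For the first inequality, use the elementary bound $e^{x}\ge e x$ for all real $x$, which is the tangent line of the convex function $e^x$ at $x=1$. Summing gives
\begin{equation*}
\sum_{k=0}^{m-1} e^{x_k}\ge e\sum_{k=0}^{m-1} x_k\ge e a\log(\Delta_0/\epsilon).
\end{equation*}
This holds for every $m$, which is what makes the first bound independent of $m$.

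For the second inequality, apply Jensen (or AM--GM) to get $\sum_k e^{x_k}\ge m\,e^{S/m}$. Then
\begin{equation*}
m\sum_k e^{x_k}\ge m^2 e^{S/m} = S^2\, h(S/m), \qquad h(u):=e^{u}/u^2.
\end{equation*}
Minimizing over $u>0$ gives the minimum at $u=2$, so $h(u)\ge e^2/4$. Hence $m\sum_k e^{x_k}\ge \frac{e^2}{4}S^2 = \frac{e^2a^2}{4}\left(\log(\Delta_0/\epsilon)\right)^2$, as claimed.

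The only delicate point is the second bound. One must not bound the sum and the factor $m$ separately, because the product is small only when $S/m$ is of order one. The single-variable minimization of $e^u/u^2$ captures this trade-off exactly, and it produces the constant $e^2/4$ in the statement.
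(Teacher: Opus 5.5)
Your proof is correct. For the second inequality it is the paper's argument in different coordinates. The paper also applies AM--GM to get $\sum_k \epsilon_{k+1}^a/\epsilon_k^a \ge m(\Delta_0/\epsilon)^{a/m}$. It then minimizes $F_2(y)=2\log y + \frac{a}{y}\log(\Delta_0/\epsilon)$ over real $y>0$, with the minimum at $y=\frac{a}{2}\log(\Delta_0/\epsilon)$, i.e.\ exactly your $u=S/m=2$.

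For the first inequality the routes differ slightly. The paper reuses the same AM--GM bound $m e^{S/m}$ and minimizes $F_1(y)=\log y + S/y$ over $y>0$, with minimum at $y=S$ and value giving $eS$. You instead apply the tangent-line bound $e^{x}\ge ex$ termwise and sum. Your version avoids AM--GM and calculus for this bound and makes its uniformity in $m$ immediate. The paper's version handles both inequalities through a single reduction to the scalar function $m e^{S/m}$. The two give the identical constant, and they are tight in the same situation: all $x_k=1$ in your notation, i.e.\ $m=S$, which is exactly where the paper's $F_1$ is minimized.
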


\begin{proof}[Proof of Proposition~\ref{prop:lowerbound-linear-rate}]
For any fixed integer $m\ge 1$, noting that $\epsilon_0\le \frac{\epsilon}{2}$ and $\epsilon_m \ge \frac{\Delta_0}{2}$, we have
\begin{equation}
\label{eq:proof-prop-lowerbound-linear-rate-eq1}
\sum_{k=0}^{m-1} \frac{\epsilon_{k+1}^a}{\epsilon_{k}^a} \ge m \left( \prod_{k=0}^{m-1} \frac{\epsilon_{k+1}^a}{\epsilon_{k}^a} \right)^{\frac{1}{m}} = m \left( \frac{\epsilon_{m}}{\epsilon_{0}} \right)^{\frac{a}{m}} \ge m \left( \frac{\Delta_0}{\epsilon} \right)^{\frac{a}{m}}.
\end{equation}
Consider the two functions $F_1(y) = \log y + \frac{a}{y}\log (\frac{\Delta_0}{\epsilon})$ and $F_2(y) = 2\log y + \frac{a}{y}\log (\frac{\Delta_0}{\epsilon})$, which are both defined for real numbers $y>0$. Since $F'_1(y) = \frac{1}{y} - \frac{a}{y^2} \log (\frac{\Delta_0}{\epsilon})$ and $F'_2(y) = \frac{2}{y} - \frac{a}{y^2} \log (\frac{\Delta_0}{\epsilon})$, the following results hold for any real $y>0$.
\begin{equation}
\label{eq:proof-prop-lowerbound-linear-rate-eq2}
F_1(y) \ge F_1\left( a\log \left( \frac{\Delta_0}{\epsilon} \right) \right) = \log \left( a\log \left( \frac{\Delta_0}{\epsilon} \right) \right) + 1 ,
\end{equation}
\begin{equation}
\label{eq:proof-prop-lowerbound-linear-rate-eq3}
F_2(y) \ge F_2\left( \frac{a}{2} \log \left( \frac{\Delta_0}{\epsilon} \right) \right) = 2\log \left( \frac{a}{2} \log \left( \frac{\Delta_0}{\epsilon} \right) \right) + 2.
\end{equation}
Now for arbitrary integer $m\ge 1$, using~\eqref{eq:proof-prop-lowerbound-linear-rate-eq1} and~\eqref{eq:proof-prop-lowerbound-linear-rate-eq2}, we obtain
\begin{equation*}
\sum_{k=0}^{m-1} \frac{\epsilon_{k+1}^a}{\epsilon_{k}^a} \ge m \left( \frac{\Delta_0}{\epsilon} \right)^{\frac{a}{m}} = \exp (F_1(m)) \ge e\cdot a\log \left( \frac{\Delta_0}{\epsilon} \right).
\end{equation*}
Similarly, using~\eqref{eq:proof-prop-lowerbound-linear-rate-eq1} and~\eqref{eq:proof-prop-lowerbound-linear-rate-eq3} gives
\begin{equation*}
m \sum_{k=0}^{m-1} \frac{\epsilon_{k+1}^a}{\epsilon_{k}^a} \ge m^2 \left( \frac{\Delta_0}{\epsilon} \right)^{\frac{a}{m}} = \exp (F_2(m)) \ge e^2\cdot \left( \frac{a}{2}\log \left( \frac{\Delta_0}{\epsilon} \right) \right)^2 . \qedhere
\end{equation*}
\end{proof}

\subsection{Proof of Proposition~\ref{prop:sublinear-rate}}
We prove this via two main propositions, handling the attainability and lower bounds, respectively. Proposition~\ref{prop:attainable} verifies that the choice of the form $\epsilon_k = \frac{\epsilon}{2e}\cdot \exp(c_0^k)$ attains the two claimed rates in Proposition~\ref{prop:sublinear-rate}.
For the lower bounds in Proposition~\ref{prop:sublinear-rate}, the first one is immediate, since $\sum_{k=0}^{m-1} \frac{\epsilon_{k+1}^a}{\epsilon_{k}^b} \ge \frac{\epsilon_{1}^a}{\epsilon_{0}^b} \ge \frac{\epsilon_{0}^a}{\epsilon_{0}^b} = \Omega(\frac{1}{\epsilon^{b-a}})$ where the last step is because $\epsilon_0\le \frac{\epsilon}{2}$. The second lower bound in Proposition~\ref{prop:sublinear-rate} can be given by Proposition~\ref{prop:lowerbound}, noting that $m\le \hat{m}$ from~\eqref{eq:m}. The proof of Proposition~\ref{prop:lowerbound} requires some technical lemmas, given in Appendix~\ref{Subsec:technical-lemmas}.

\begin{proposition}
\label{prop:attainable}
Consider any fixed $0<a<b$, $0<\epsilon< \Delta_0$ with a sufficiently large ratio $\frac{\Delta_0}{\epsilon}$. Set $\epsilon_k = \frac{\epsilon}{2e}\cdot \exp(c_0^k)$ for any $c_0\in (1, \frac{b}{a})$. If $m$ and $\hat{m}$ are defined by~\eqref{eq:m}, then
\begin{equation*}
\sum_{k=0}^{m-1} \frac{\epsilon_{k+1}^a}{\epsilon_{k}^b} = \mathcal{O}\left( \frac{1}{\epsilon^{b-a}} \right) , \quad\text{and}\quad (\hat{m}+1) \cdot \sum_{k=0}^{m-1} \frac{\epsilon_{k+1}^a}{\epsilon_{k}^b} = \mathcal{O}\left( \frac{1}{\epsilon^{b-a}} \cdot\log\log \left(\frac{\Delta_0}{\epsilon} \right) \right) .
\end{equation*}
\end{proposition}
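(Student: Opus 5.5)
We compute the sum term by term for the explicit choice $\epsilon_k = \frac{\epsilon}{2e}\exp(c_0^k)$. Each summand is
\begin{equation*}
\frac{\epsilon_{k+1}^a}{\epsilon_k^b} = \left(\frac{\epsilon}{2e}\right)^{a-b} \exp\left( a c_0^{k+1} - b c_0^{k} \right) = \left(\frac{2e}{\epsilon}\right)^{b-a} \exp\left( -c_0^k (b - a c_0) \right).
\end{equation*}
Since $c_0 < b/a$, the constant $\gamma := b - a c_0$ is positive. The prefactor is already of order $\epsilon^{-(b-a)}$, so the first claim reduces to showing that $\sum_{k\ge 0} \exp(-\gamma c_0^k)$ is a finite constant depending only on $a,b,c_0$.

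To bound this series, use $c_0^k \ge 1 + k(c_0-1)$, which follows from Bernoulli's inequality. This gives
\begin{equation*}
\exp(-\gamma c_0^k) \le e^{-\gamma} \exp(-\gamma (c_0-1) k),
\end{equation*}
a geometric series with ratio $e^{-\gamma(c_0-1)} < 1$. Its sum is at most $e^{-\gamma}/(1-e^{-\gamma(c_0-1)})$. Extending the sum from $k\le m-1$ to all $k\ge 0$ then gives
\begin{equation*}
\sum_{k=0}^{m-1} \frac{\epsilon_{k+1}^a}{\epsilon_k^b} = \mathcal{O}\left(\epsilon^{-(b-a)}\right),
\end{equation*}
uniformly in $\Delta_0$.

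For the second claim it remains to show $\hat m + 1 = \mathcal{O}(\log\log(\Delta_0/\epsilon))$. By~\eqref{eq:m}, $\hat m$ is the least $k$ with $\frac{\epsilon}{2e}\exp(c_0^k) > \Delta_0$, that is, with
\begin{equation*}
c_0^k > 1 + \log\left(\frac{2\Delta_0}{\epsilon}\right).
\end{equation*}
Hence
\begin{equation*}
\hat m \le 1 + \frac{\log\left(1 + \log(2\Delta_0/\epsilon)\right)}{\log c_0}.
\end{equation*}
When $\Delta_0/\epsilon$ is sufficiently large, the right-hand side is $\mathcal{O}(\log\log(\Delta_0/\epsilon))$; this is exactly where the hypothesis of a large ratio is used, since it lets the additive constants be absorbed. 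Multiplying this bound on $\hat m+1$ by the first bound gives the second claim.

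There is no serious obstacle in this argument. The only point needing care is to check that the constants in the geometric tail bound depend only on $a$, $b$, and $c_0$, and not on $m$, $\epsilon$, or $\Delta_0$.
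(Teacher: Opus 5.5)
Your proof is correct and matches the paper's argument: you use the same closed form $(2e/\epsilon)^{b-a}\exp(-(b-ac_0)c_0^k)$ for each summand, the same summability of the doubly exponential tail, and the same bound $\hat m \le 1 + \log\log(2e\Delta_0/\epsilon)/\log c_0$ (your $\log(1+\log(2\Delta_0/\epsilon))$ is the identical quantity). The only difference is your Bernoulli-based geometric majorant, which supplies an explicit constant where the paper simply asserts that the series is finite.
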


\begin{proof}[Proof of Proposition~\ref{prop:attainable}]
For any $k\ge 0$,
\begin{equation*}
\frac{\epsilon_{k+1}^a}{\epsilon_{k}^b} = \frac{(\frac{\epsilon}{2e})^a \cdot \exp(a\cdot c_0^{k+1})}{ (\frac{\epsilon}{2e})^b \cdot \exp(b\cdot c_0^{k})} = \left( \frac{2e}{\epsilon} \right)^{b-a} \cdot \exp\left( -(b- a\cdot c_0) c_0^k \right) . 
\end{equation*}
Note that $c_0>1$ and $b- a\cdot c_0 >0$, so $\sum_{k=0}^{\infty} \exp\left( -(b- a\cdot c_0) c_0^k \right)$ is finite, which further implies $\sum_{k=0}^{m-1} \frac{\epsilon_{k+1}^a}{\epsilon_{k}^b} = \mathcal{O}\left( \frac{1}{\epsilon^{b-a}} \right)$. 

We also need to upper bound $\hat{m}$. By~\eqref{eq:m}, $\hat{m}= \inf\{k: \epsilon_k > \Delta_0 \}$, so
\begin{equation*}
\hat{m} = \inf \left\{k: \exp(c_0^k) \cdot \frac{\epsilon}{2e} > \Delta_0 \right\} \le \frac{\log\log (\frac{2e\Delta_0}{\epsilon})}{\log c_0} +1 = \mathcal{O}\left( \log\log \left(\frac{\Delta_0}{\epsilon} \right) \right) .
\end{equation*}
Combining the upper bounds for $\sum_{k=0}^{m-1} \frac{\epsilon_{k+1}^a}{\epsilon_{k}^b}$ and $(\hat{m}+1)$ leads to the desired results.
\end{proof}

\begin{proposition}
\label{prop:lowerbound}
Consider any $0<a<b$ and $0<\epsilon< \Delta_0$. If $\log \left( \frac{\Delta_0}{\epsilon} \right) > \max\{\frac{e^2}{(b-a)^2}, \frac{1}{b-a} \}$, then the following holds for any integer $m\ge 1$ and any positive $\epsilon_0,\dots,\epsilon_m$ satisfying $\epsilon_0\le \frac{\epsilon}{2}$ and $\epsilon_m \ge \frac{\Delta_0}{2}$.
\begin{equation*}
m \sum_{k=0}^{m-1} \frac{\epsilon_{k+1}^a}{\epsilon_{k}^b} \ge \frac{2^{b-a-1}}{\log \frac{b}{a}} \cdot \frac{1}{\epsilon^{b-a}} \cdot \log\log \left( \frac{\Delta_0}{\epsilon} \right) .
\end{equation*}
\end{proposition}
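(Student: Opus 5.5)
We change variables to make the recursive structure visible. Write $\Lambda := \log(\Delta_0/\epsilon)$ and $\epsilon_k = \frac{\epsilon}{2} e^{y_k}$. The constraints then read $y_0 \le 0$ and $y_m \ge \Lambda$, and
\[
\frac{\epsilon_{k+1}^a}{\epsilon_k^b} = \Big(\frac{2}{\epsilon}\Big)^{b-a} \exp\big(a y_{k+1} - b y_k\big).
\]
Put $S := \sum_{k=0}^{m-1} \exp(a y_{k+1} - b y_k)$. Since $2^{b-a}\cdot 2^{-1} = 2^{b-a-1}$, the claim is equivalent to
\[
m S \ge \frac{\log \Lambda}{2\log(b/a)}.
\]
We prove this by deriving a lower bound on $m$ as a function of $S$ and then minimizing $mS$ over $S$.

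The key step is a linear recursion. Each summand is at most $S$, so $a y_{k+1} \le b y_k + \log S$ for every $k$. Set $s := \frac{\log S}{b-a}$ and $z_k := y_k + s$. This shift is chosen as the fixed point of the affine map, and it turns the inequality into $z_{k+1} \le \frac{b}{a} z_k$, hence
\[
z_m \le (b/a)^m z_0 \quad\text{with}\quad z_0 \le s .
\]

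First we rule out $S \le 1$. In that case $s \le 0$, so $z_m \le (b/a)^m s \le s$. On the other hand $z_m = y_m + s \ge \Lambda + s > s$, which is a contradiction. Hence $S > 1$ and $s > 0$. Then $\Lambda < \Lambda + s \le (b/a)^m s$, which gives
\[
m \ge \frac{\log\big(\Lambda/s\big)}{\log(b/a)} = \frac{\log\big((b-a)\Lambda/\log S\big)}{\log(b/a)} .
\]
Therefore $mS \ge S\,\log\big((b-a)\Lambda/\log S\big)/\log(b/a)$, and it remains to lower bound this one-variable expression in $S > 1$.

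We split into two cases according to whether $\log S \le (b-a)\sqrt{\Lambda}$.

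In the first case, $\log S \le (b-a)\sqrt{\Lambda}$, we get $(b-a)\Lambda/\log S \ge \sqrt{\Lambda}$. Together with $S > 1$ this gives $mS \ge \frac{\frac12 \log\Lambda}{\log(b/a)}$, which is exactly the claim.

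In the second case, $S > \exp\big((b-a)\sqrt{\Lambda}\big)$, and we use $m \ge 1$ together with the size of $S$. The hypothesis $\Lambda > e^2/(b-a)^2$ gives $(b-a)\sqrt{\Lambda} > e$, so $S$ is large. I expect this case to be the main obstacle: the bound $S \ge \frac{\log\Lambda}{2\log(b/a)}$ must hold uniformly in $b/a$, and when $b/a$ is close to $1$ the right side blows up. My first attempt will use $\log(b/a) \ge \frac{b-a}{b}$, or more naturally $\log(b/a) \ge \frac{b-a}{a}\cdot\frac{a}{b}$, together with the elementary inequality $e^{x} \ge x^2/2$ applied to $x = (b-a)\sqrt{\Lambda}$; this gives $S \ge (b-a)^2\Lambda/2$. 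It then remains to compare with $\log\Lambda/(2\log(b/a))$, using the second hypothesis $\Lambda > 1/(b-a)$. If this comparison fails for some range of $a, b$, the fallback is to keep the factor $\log(\Lambda/s)/\log(b/a)$ for $m$ instead of replacing it by $1$. Since $s < \Lambda$ always, $m$ stays at least $1$, and for $\log S$ between $(b-a)\sqrt{\Lambda}$ and $(b-a)\Lambda/e$ the product $S\log(\Lambda/s)$ is still increasing in $S$. One can then choose the threshold between the two cases differently, for example at $\log S = (b-a)\Lambda/\log\Lambda$, to balance them.
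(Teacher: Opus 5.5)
\textbf{Where it is sound.} Your change of variables, the reduction to $mS\ge \frac{\log\Lambda}{2\log(b/a)}$, the fixed-point recursion $z_m\le (b/a)^m z_0\le (b/a)^m s$, the exclusion of $S\le 1$, and Case~1 are all correct. This is a genuinely different route from the paper, which solves the fixed-$m$ problem exactly by weighted AM--GM and then bounds $\inf_{y>1}\exp(L(y))$ in three regimes.

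\textbf{The gap.} Case~2 is left unproved, and neither of your plans can close it.

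The first plan cannot work in principle. The hypotheses and the bound $S>\exp((b-a)\sqrt{\Lambda})$ involve only $b-a$ and $\Lambda$. But $\frac{\log\Lambda}{2\log(b/a)}\to\infty$ as $a\to\infty$ with $b-a$ fixed. So $m\ge 1$ together with any lower bound on $S$ of this type is never enough.

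The fallback fails for the same reason. In passing from $\Lambda+s\le (b/a)^m s$ to $\Lambda<(b/a)^m s$ you discarded exactly the information that forces $m$ to be large when $b/a$ is near $1$. Your bound $m\ge \log(\Lambda/s)/\log(b/a)$ is vacuous once $\log S\ge (b-a)\Lambda$. Also, your claim that $s<\Lambda$ always is false: $S$ is unbounded, e.g.\ take $\epsilon_1$ huge.

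Concretely, take $b-a=1$, $\Lambda=10$, $a=10^6$, which satisfy the hypotheses. The target is about $\frac{\log 10}{2\cdot 10^{-6}}\approx 1.15\cdot 10^{6}$. Yet for every $S$ with $\log S\in[10,13.9]$, your inequalities give only $mS\ge S<1.15\cdot 10^6$. Moving the threshold between the cases does not help.

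\textbf{The fix.} Keep the $+1$: use $(b/a)^m\ge 1+\Lambda/s$, i.e.\ $m\log(b/a)\ge \log(1+K/u)$ with $u:=\log S$ and $K:=(b-a)\Lambda$. For $m=1$ this reads $S\ge e^{a\Lambda}$, which is exactly what compensates the blow-up of $1/\log(b/a)$.

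Then $mS\log(b/a)\ge g(u):=e^u\log(1+K/u)$. With $x=K/u$ one has $g'(u)=e^u\bigl[\log(1+x)-\frac{x}{u(1+x)}\bigr]$, which is positive for $u>1$ because $\log(1+x)\ge\frac{x}{1+x}$.

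Let $u_0:=(b-a)\sqrt{\Lambda}$; then $u_0>e$ by the hypothesis $\Lambda>e^2/(b-a)^2$.
\begin{itemize}
\item For $u\le u_0$: $g(u)\ge \log(1+K/u)\ge \log(1+\sqrt{\Lambda})$.
\item For $u\ge u_0$: $g(u)\ge g(u_0)\ge\log(1+\sqrt{\Lambda})$.
\end{itemize}
Hence $mS\ge \frac{\log(1+\sqrt{\Lambda})}{\log(b/a)}\ge\frac{\log\Lambda}{2\log(b/a)}$ in all cases, which completes your argument.

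The repaired proof is more elementary than the paper's and only needs $\Lambda\ge 1/(b-a)^2$. Your relation $\log S\ge (b-a)\Lambda/((b/a)^m-1)$ has the same main term as the paper's fixed-$m$ lemma; it just drops the harmless factor $c^{\frac{1}{c-1}-\frac{m}{c^m-1}}\ge 1$.
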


\begin{proof}[Proof of Proposition~\ref{prop:lowerbound}]
By Lemma~\ref{lemma:fixed-m}, for any fixed integer $m\ge 1$, we have
\begin{equation}
\label{eq:proof-prop-lowerbound-eq1}
m \sum_{k=0}^{m-1} \frac{\epsilon_{k+1}^a}{\epsilon_{k}^b} \ge m \cdot c^{\frac{1}{c-1} - \frac{m}{c^m-1}} \cdot \left( \frac{2}{\epsilon} \right)^{b-a} \cdot \left( \frac{\Delta_0}{\epsilon} \right)^{\frac{b-a}{c^m-1}} = c^{\frac{1}{c-1}} \cdot \left( \frac{2}{\epsilon} \right)^{b-a} \cdot \exp(\tilde{L} (c^m)) ,
\end{equation}
where $c=\frac{b}{a}>1$, and the function $\tilde{L}(y)$ is defined for any real number $y>1$ as follows
\begin{equation*}
\tilde{L}(y) := \log \left( \frac{\log y}{\log c} \right) - \frac{1}{y-1}\log y + \frac{b-a}{y-1} \cdot \log \left( \frac{\Delta_0}{\epsilon} \right) .
\end{equation*}
By Lemma~\ref{lemma:loglog-bound}, $\exp(\tilde{L}(y)+ \log\log c) = \Omega\left( \log\log \left( \frac{\Delta_0}{\epsilon} \right) \right)$ for any $y>1$, and more precisely, the following holds if $\log \left( \frac{\Delta_0}{\epsilon} \right) > \max\{\frac{e^2}{(b-a)^2}, \frac{1}{b-a} \}$, 
\begin{equation}
\label{eq:proof-prop-lowerbound-eq2}
(\log c) \inf_{y>1} \{\exp(\tilde{L}(y))\}= \inf_{y>1} \{\exp(\tilde{L}(y)+ \log\log c)\} \ge \frac{1}{2} \log\log \left( \frac{\Delta_0}{\epsilon} \right) .
\end{equation}
Now for arbitrary $m$, combining~\eqref{eq:proof-prop-lowerbound-eq1} and~\eqref{eq:proof-prop-lowerbound-eq2}, if $\log \left( \frac{\Delta_0}{\epsilon} \right) > \max\{\frac{e^2}{(b-a)^2}, \frac{1}{b-a} \}$ then 
\begin{equation*}
\begin{split}
& m \sum_{k=0}^{m-1} \frac{\epsilon_{k+1}^a}{\epsilon_{k}^b} \ge c^{\frac{1}{c-1}} \cdot \left( \frac{2}{\epsilon} \right)^{b-a} \cdot \inf_{m\in \mathbb{N}_+} \{\exp(\tilde{L}(c^m)) \} \\
&\ge \left( \frac{2}{\epsilon} \right)^{b-a} \cdot \inf_{y\in\mathbb{R},y>1} \{\exp(\tilde{L}(y)) \} \ge \left( \frac{2}{\epsilon} \right)^{b-a} \cdot \frac{1}{2\log c} \log\log \left( \frac{\Delta_0}{\epsilon} \right) ,
\end{split}
\end{equation*}
where the second step is because $c>1$. Replacing $c$ by $\frac{b}{a}$ gives the desired result.
\end{proof}

\subsection{Supporting Lemmas for Proposition~\ref{prop:lowerbound}}
\label{Subsec:technical-lemmas}

\begin{lemma}
\label{lemma:loglog-bound}
Consider any $0<a<b$ and $0<\epsilon< \Delta_0$. Define the function
\begin{equation*}
L(y) = \log \log y - \frac{1}{y-1}\log y + \frac{b-a}{y-1} \cdot \log \left( \frac{\Delta_0}{\epsilon} \right) .
\end{equation*}
Then $\exp(L(y)) = \Omega\left( \log\log \left( \frac{\Delta_0}{\epsilon} \right) \right)$ uniformly for any $y>1$, as $\frac{\Delta_0}{\epsilon} \to \infty$. More precisely, if $\log \left( \frac{\Delta_0}{\epsilon} \right) > \max\{\frac{e^2}{(b-a)^2}, \frac{1}{b-a} \}$, then
\begin{equation*}
\inf_{y>1} \{\exp(L(y))\} \ge \frac{1}{2} \log\log \left( \frac{\Delta_0}{\epsilon} \right) .
\end{equation*}
\end{lemma}

\begin{proof}[Proof of Lemma~\ref{lemma:loglog-bound}]
Consider the following three cases. 

(i) If $y\ge \left( \frac{\Delta_0}{\epsilon} \right)^{b-a}$, then
\begin{equation*}
\log\log y > \log\left( (b-a) \log\left( \frac{\Delta_0}{\epsilon} \right) \right) = \log\log \left( \frac{\Delta_0}{\epsilon} \right) + \log(b-a).
\end{equation*}
Also, since $\frac{1}{y-1}\log y \le 1$ and $\frac{b-a}{y-1} \cdot \log \left( \frac{\Delta_0}{\epsilon} \right) \ge 0$, we have
\begin{equation*}
\begin{split}
L(y) &= \log\log y - \frac{1}{y-1}\log y + \frac{b-a}{y-1} \cdot \log \left( \frac{\Delta_0}{\epsilon} \right) \\
&\ge \log\log \left( \left( \frac{\Delta_0}{\epsilon} \right)^{b-a} \right) -1+0 = \log\log \left( \frac{\Delta_0}{\epsilon} \right) + \log(b-a) -1 ,
\end{split}
\end{equation*}
and $\exp (L(y)) = \Omega\left( \log \left( \frac{\Delta_0}{\epsilon} \right) \right) = \Omega\left( \log\log \left( \frac{\Delta_0}{\epsilon} \right) \right)$. In particular, we can obtain the following explicit lower bound given $\log \left( \frac{\Delta_0}{\epsilon} \right) > \frac{e^2}{(b-a)^2}$,
\begin{equation*}
\exp(L(y)) \ge \frac{b-a}{e} \cdot \log\left( \frac{\Delta_0}{\epsilon} \right) \ge \sqrt{\log\left( \frac{\Delta_0}{\epsilon} \right)} \ge \log \sqrt{\log\left( \frac{\Delta_0}{\epsilon} \right)} = \frac{1}{2} \log\log \left( \frac{\Delta_0}{\epsilon} \right) .
\end{equation*}

(ii) If $\left( \frac{\Delta_0}{\epsilon} \right)^{b-a} >y \ge (b-a)\log\left( \frac{\Delta_0}{\epsilon} \right)$ and $y>1$, then
\begin{equation*}
- \frac{1}{y-1}\log y + \frac{b-a}{y-1} \cdot \log \left( \frac{\Delta_0}{\epsilon} \right) \ge 0.
\end{equation*}
Assume $\frac{\Delta_0}{\epsilon}$ is sufficiently large such that $\log\left( \frac{\Delta_0}{\epsilon} \right) > \frac{1}{b-a}$,
then $\log\log y \ge \log\log\left( (b-a)\log\left( \frac{\Delta_0}{\epsilon} \right) \right)$. So we have
\begin{equation}
\label{eq:proof-lemma-loglog-bound-eq1}
L(y) = \log \log y - \frac{1}{y-1}\log y + \frac{b-a}{y-1} \cdot \log \left( \frac{\Delta_0}{\epsilon} \right) \ge \log\log\left( (b-a)\log\left( \frac{\Delta_0}{\epsilon} \right) \right) ,
\end{equation}
and $\exp (L(y)) = \Omega\left( \log\log \left( \frac{\Delta_0}{\epsilon} \right) \right)$. In particular, if $\log \left( \frac{\Delta_0}{\epsilon} \right) > \max\{\frac{1}{(b-a)^2}, \frac{1}{b-a} \}$, then exponentiating~\eqref{eq:proof-lemma-loglog-bound-eq1} gives
\begin{equation}
\label{eq:proof-lemma-loglog-bound-eq2}
\exp (L(y)) \ge \log\log \left( \frac{\Delta_0}{\epsilon} \right) + \log(b-a) \ge \frac{1}{2} \log\log \left( \frac{\Delta_0}{\epsilon} \right) .
\end{equation}

(iii) If $(b-a)\log\left( \frac{\Delta_0}{\epsilon} \right) >y >1$, let $t_0 := \frac{(b-a)}{y} \log\left( \frac{\Delta_0}{\epsilon} \right)$, then $t_0 >1$. Consider the function $h(y,t)$ defined for real numbers $y>1$ and $t>0$ as follows
\begin{equation*}
h(y,t) := \frac{ty - \log y}{y-1} - \frac{\log t}{\log y} .
\end{equation*}
Note that for any $y>1$ and $t\ge 1$, the partial derivative satisfies
\begin{equation*}
\frac{\partial}{\partial t}h(y,t) = \frac{y}{y-1} - \frac{1}{t \log y} \ge \frac{1}{1-\frac{1}{y}} - \frac{1}{\log y} = \frac{1}{\int_{1}^y \frac{1}{s^2} \mathrm{d} s} - \frac{1}{\int_{1}^y \frac{1}{s} \mathrm{d} s} > 0 .
\end{equation*}
So $h(y,t) \ge h(y,1) = \frac{y-\log y}{y-1} >0$ for any $y>1$ and $t\ge 1$. Using our choice of $t_0$, we have
\begin{equation*}
\begin{split}
L(y) &= \log \log y - \frac{1}{y-1}\log y + \frac{b-a}{y-1} \cdot \log \left( \frac{\Delta_0}{\epsilon} \right) = \log\log y + \frac{t_0\cdot y - \log y}{y-1} \\
&\ge \log\log y + \frac{\log t_0}{\log y} \\
&\ge \log(\log y + \log t_0) = \log\log (y\cdot t_0) = \log\log \left( (b-a)\log\left( \frac{\Delta_0}{\epsilon} \right) \right),
\end{split}
\end{equation*}
where the first inequality follows from $h(y, t_0) >0$ and the second inequality is by the concavity of $\log(u)$ at $u=\log y$. This lower bound of $L(y)$ is exactly the same as~\eqref{eq:proof-lemma-loglog-bound-eq1}, so we still have $\exp (L(y)) = \Omega\left( \log\log \left( \frac{\Delta_0}{\epsilon} \right) \right)$ and the same explicit version in~\eqref{eq:proof-lemma-loglog-bound-eq2}.

Combining all three cases, given $\log \left( \frac{\Delta_0}{\epsilon} \right) > \max\{\frac{e^2}{(b-a)^2}, \frac{1}{b-a} \}$, we always have $\inf_{y>1} \{\exp(L(y))\} \ge \frac{1}{2} \log\log \left( \frac{\Delta_0}{\epsilon} \right)$.
\end{proof}

\begin{lemma}
\label{lemma:fixed-m}
For any fixed $0<a<b$, $0<\epsilon< \Delta_0$, and any fixed integer $m\ge 1$, let $v^\star_m$ be the infimum of the following constrained problem
\begin{equation}
\label{eq:lemma-fixed-m-eq1}
v^\star_m = \begin{cases}
\inf & \sum_{k=0}^{m-1} \frac{\epsilon_{k+1}^a}{\epsilon_{k}^b} \\
\rm{subject\ to} & \epsilon_0 \le \frac{\epsilon}{2},\ \epsilon_m \ge \frac{\Delta_0}{2},\ \epsilon_0, \epsilon_1, \cdots, \epsilon_m >0.
\end{cases}
\end{equation}
Then 
\begin{equation*}
v^\star_m \ge c^{\frac{1}{c-1} - \frac{m}{c^m-1}} \cdot \left( \frac{2}{\epsilon} \right)^{b-a} \cdot \left( \frac{\Delta_0}{\epsilon} \right)^{\frac{b-a}{c^m-1}},
\end{equation*}
where $c=\frac{b}{a}$.
\end{lemma}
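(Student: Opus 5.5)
The plan is to lower bound the objective by a weighted AM--GM inequality, choosing the weights so that every intermediate variable $\epsilon_1,\dots,\epsilon_{m-1}$ cancels from the geometric mean. Write $x_k := \epsilon_{k+1}^a/\epsilon_k^b$ for $0\le k\le m-1$, and take positive weights $w_0,\dots,w_{m-1}$ with $\sum_k w_k=1$. Weighted AM--GM applied to the numbers $x_k/w_k$ gives
\begin{equation*}
\sum_{k=0}^{m-1} x_k = \sum_{k=0}^{m-1} w_k \cdot \frac{x_k}{w_k} \ge \prod_{k=0}^{m-1} \left(\frac{x_k}{w_k}\right)^{w_k} = \left(\prod_{k=0}^{m-1} x_k^{w_k}\right)\cdot \exp\left(-\sum_{k=0}^{m-1} w_k\log w_k\right).
\end{equation*}
For $1\le k\le m-1$, the exponent of $\epsilon_k$ in $\prod_k x_k^{w_k}$ is $a w_{k-1}-b w_k$. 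Setting it to zero forces $w_k = w_0 c^{-k}$ with $c=b/a>1$. Normalizing then gives
\begin{equation*}
w_0=\frac{c^{m-1}(c-1)}{c^m-1}\le 1 .
\end{equation*}

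With this choice, $\prod_k x_k^{w_k}=\epsilon_m^{a w_{m-1}}\epsilon_0^{-b w_0}$, and a short computation gives
\begin{equation*}
a w_{m-1}=\frac{b-a}{c^m-1},\qquad b w_0=(b-a)\left(1+\frac{1}{c^m-1}\right).
\end{equation*}
Hence the product equals $\epsilon_0^{-(b-a)}(\epsilon_m/\epsilon_0)^{(b-a)/(c^m-1)}$. Both exponents are positive, so the constraints $\epsilon_0\le \epsilon/2$ and $\epsilon_m\ge \Delta_0/2$ bound it below by
\begin{equation*}
\left(\frac{2}{\epsilon}\right)^{b-a}\left(\frac{\Delta_0}{\epsilon}\right)^{\frac{b-a}{c^m-1}} .
\end{equation*}
This already matches the two $\epsilon$-dependent factors of the claimed bound.

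It remains to evaluate the entropy factor. Since $\log w_k=\log w_0-k\log c$, we have
\begin{equation*}
-\sum_k w_k\log w_k = -\log w_0 + (\log c)\sum_{k=0}^{m-1} k w_k .
\end{equation*}
The first term is nonnegative because $w_0\le 1$, so it can be dropped. For the second, $\sum_k k w_k$ is the mean of a truncated geometric distribution with ratio $q=1/c$. The standard identity
\begin{equation*}
\frac{\sum_{k=0}^{m-1}kq^k}{\sum_{k=0}^{m-1}q^k}=\frac{q}{1-q}-\frac{mq^m}{1-q^m}
\end{equation*}
evaluates it as $\frac{1}{c-1}-\frac{m}{c^m-1}$. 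Exponentiating yields the factor $c^{\frac{1}{c-1}-\frac{m}{c^m-1}}$.

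Combining the three factors bounds $\sum_k x_k$ below for every feasible tuple, and taking the infimum gives the lemma. The only step needing care is this last closed-form computation of $\sum_k k w_k$, which one verifies by differentiating the finite geometric sum. The rest is exponent bookkeeping.
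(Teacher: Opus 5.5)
Your proof is correct and is essentially the paper's argument: your normalized weights $w_k = w_0 c^{-k}$ are the paper's $t_k = a^k b^{m-1-k}$ divided by their sum. The factor $1/w_0 = \sum_{k=0}^{m-1} c^{-k} \ge 1$ that you discard via $-\log w_0 \ge 0$ is exactly the factor the paper discards at the end; the only difference is that you compute the AM--GM constant directly from the truncated geometric mean, whereas the paper gets it by exhibiting the equality-attaining sequence $\bar\epsilon_k$ (which also shows $v^\star_m$ is attained, but this is not needed for the lower bound).
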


\begin{proof}[Proof of Lemma~\ref{lemma:fixed-m}]
For any $0\le k\le m-1$, let $y_k = \frac{\epsilon_{k+1}^a}{\epsilon_{k}^{b}}$ and $t_k = a^k\cdot b^{m-1-k}$. Then for any feasible solution of~\eqref{eq:lemma-fixed-m-eq1},
\begin{equation}
\label{eq:proof-lemma-fixed-m-eq1}
\prod_{k=0}^{m-1} y_k^{t_k} = \prod_{k=0}^{m-1} \left( \epsilon_k^{-b\cdot t_k} \cdot \epsilon_{k+1}^{a\cdot t_k} \right) = \epsilon_0^{-b\cdot t_0} \cdot \epsilon_m^{a\cdot t_{m-1}} \cdot \prod_{k=1}^{m-1} \epsilon_k^{a\cdot t_{k-1} - b\cdot t_k} = \epsilon_0^{-b^m} \cdot \epsilon_m^{a^m} \ge \left( \frac{\Delta_0}{2} \right)^{a^m} \cdot \left( \frac{2}{\epsilon} \right)^{b^m}.
\end{equation}
By Lemma~\ref{lemma:Jensen-corollary}, 
\begin{equation}
\label{eq:proof-lemma-fixed-m-eq2}
\sum_{k=0}^{m-1} y_k \ge (t_0+\cdots+t_{m-1}) \left( \frac{\prod_{k=0}^{m-1} y_k^{t_k} }{\prod_{k=0}^{m-1} t_k^{t_k}} \right)^{\frac{1}{t_0+\cdots+t_{m-1}}} .
\end{equation}
Denote $A := \sum_{k=0}^{m-1} t_k$, and $B:= \prod_{k=0}^{m-1} t_k^{t_k}$, which are two constants only depending on $a,b,m$. Combining~\eqref{eq:proof-lemma-fixed-m-eq1} and~\eqref{eq:proof-lemma-fixed-m-eq2} gives
\begin{equation}
\label{eq:proof-lemma-fixed-m-eq3}
\sum_{k=0}^{m-1} \frac{\epsilon_{k+1}^a}{\epsilon_{k}^{b}} = \sum_{k=0}^{m-1} y_k \ge \frac{A}{B^{1/A}} \cdot \left( \left( \frac{\Delta_0}{2} \right)^{a^m} \cdot \left( \frac{2}{\epsilon} \right)^{b^m} \right)^{1/A},
\end{equation}
and the equality is attained if both~\eqref{eq:proof-lemma-fixed-m-eq1} and~\eqref{eq:proof-lemma-fixed-m-eq2} reach equality, which requires $\epsilon_0 = \frac{\epsilon}{2}$, $\epsilon_m = \frac{\Delta_0}{2}$, and $\frac{y_0}{t_0} = \cdots = \frac{y_{m-1}}{t_{m-1}}$ (see Lemma~\ref{lemma:Jensen-corollary}). Note $\frac{y_{k-1}}{y_{k}} = \frac{\epsilon_{k}^a}{\epsilon_{k-1}^{b}}/ \frac{\epsilon_{k+1}^a}{\epsilon_{k}^{b}}$ and $\frac{t_{k-1}}{t_{k}} = \frac{b}{a}$ for any $1\le k\le m-1$. So~\eqref{eq:proof-lemma-fixed-m-eq3} reaches equality if the following holds
\begin{equation}
\label{eq:proof-lemma-fixed-m-eq4}
\begin{cases}
\epsilon_0 = \frac{\epsilon}{2}, \quad \epsilon_m = \frac{\Delta_0}{2}, \\
\frac{\epsilon_{k}^{a}}{\epsilon_{k-1}^b} = \frac{b}{a}\cdot \frac{\epsilon_{k+1}^a}{\epsilon_{k}^{b}}, \quad \forall 1\le k\le m-1 .
\end{cases}
\end{equation}
Consider the following choice of $(\bar{\epsilon}_0, \dots, \bar{\epsilon}_m)$,
\begin{equation*}
\bar{\epsilon}_k = \frac{\epsilon}{2}\cdot \left( \frac{\Delta_0}{\epsilon} \right)^{\frac{c^k-1}{c^m-1}} \cdot c^{\frac{k}{a(c-1)} - \frac{m(c^k-1)}{a(c-1)(c^m-1)}}, \quad \forall 0\le k\le m.
\end{equation*}
Note the following quantity depends on $k$ only through $c^{-k}$,
\begin{equation}
\label{eq:proof-lemma-fixed-m-eq5}
\frac{\bar{\epsilon}_{k+1}^a}{\bar{\epsilon}_{k}^{b}} = \left( \frac{\epsilon}{2} \right)^{a-b} \cdot \left( \frac{\Delta_0}{\epsilon} \right)^{\frac{b-a}{c^m-1}} \cdot c^{-k+ \frac{1}{c-1} - \frac{m}{c^m-1}} .
\end{equation}
By~\eqref{eq:proof-lemma-fixed-m-eq5}, it can be verified that $(\bar{\epsilon}_0, \dots, \bar{\epsilon}_m)$ satisfies~\eqref{eq:proof-lemma-fixed-m-eq4} and is a feasible solution of~\eqref{eq:lemma-fixed-m-eq1}. Then by~\eqref{eq:proof-lemma-fixed-m-eq3}, $v^\star_m$ is attained by $(\bar{\epsilon}_0, \dots, \bar{\epsilon}_m)$, so $v^\star_m = \sum_{k=0}^{m-1} \frac{\bar{\epsilon}_{k+1}^a}{\bar{\epsilon}_{k}^{b}}$. Using~\eqref{eq:proof-lemma-fixed-m-eq5} again, we have
\begin{equation*}
\begin{split}
v^\star_m &= \sum_{k=0}^{m-1} \frac{\bar{\epsilon}_{k+1}^a}{\bar{\epsilon}_{k}^{b}} = \left( \frac{\epsilon}{2} \right)^{a-b} \cdot \left( \frac{\Delta_0}{\epsilon} \right)^{\frac{b-a}{c^m-1}} \cdot c^{\frac{1}{c-1} - \frac{m}{c^m-1}} \sum_{k=0}^{m-1} c^{-k} \\
&\ge \left( \frac{\epsilon}{2} \right)^{a-b} \cdot \left( \frac{\Delta_0}{\epsilon} \right)^{\frac{b-a}{c^m-1}} \cdot c^{\frac{1}{c-1} - \frac{m}{c^m-1}} 
\end{split}
\end{equation*}
which completes the proof.
\end{proof}

\begin{lemma}
\label{lemma:Jensen-corollary}
The following inequality holds for any integer $m\ge 1$ and any sequences of positive real numbers $\{y_k\}_{k=1}^m, \{t_k\}_{k=1}^m$.
\begin{equation*}
\sum_{k=1}^m y_k \ge (t_1+\cdots+t_m) \left( \frac{\prod_{k=1}^m y_k^{t_k} }{\prod_{k=1}^m t_k^{t_k}} \right)^{\frac{1}{t_1+\cdots+t_m}} .
\end{equation*}
Moreover, equality holds if and only if $\frac{y_1}{t_1} = \frac{y_2}{t_2} = \cdots = \frac{y_m}{t_m}$.
\end{lemma}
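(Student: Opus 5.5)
This is the weighted AM--GM inequality, and I would prove it by applying Jensen's inequality to the concave function $\log$. Set $T := t_1+\cdots+t_m>0$ and weights $w_k := t_k/T$, so that $w_k>0$ and $\sum_k w_k = 1$. Rewrite the left-hand side as a convex combination:
\begin{equation*}
\sum_{k=1}^m y_k = \sum_{k=1}^m w_k \cdot \frac{T y_k}{t_k}.
\end{equation*}
By concavity of $\log$ on $(0,\infty)$, Jensen's inequality gives
\begin{equation*}
\log\Bigl(\sum_{k=1}^m w_k \frac{T y_k}{t_k}\Bigr) \ge \sum_{k=1}^m w_k \log\frac{T y_k}{t_k} = \log T + \frac{1}{T}\sum_{k=1}^m t_k \log y_k - \frac{1}{T}\sum_{k=1}^m t_k\log t_k .
\end{equation*}
Exponentiating yields exactly $T\bigl(\prod_k y_k^{t_k}/\prod_k t_k^{t_k}\bigr)^{1/T}$, which is the claimed inequality.

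For the equality case I would use strict concavity of $\log$. Since all weights $w_k$ are strictly positive, Jensen's inequality is an equality if and only if all points $T y_k/t_k$ coincide, that is, if and only if $y_1/t_1=\cdots=y_m/t_m$. For the converse direction, if $y_k=\lambda t_k$ for all $k$, I would check directly that both sides equal $\lambda T$.

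There is no real obstacle here. The only care needed is to state strict concavity correctly so that the ``only if'' direction holds with positive weights, and to note that the case $m=1$ is trivially an equality, consistent with the stated condition.
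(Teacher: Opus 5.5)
Your proof is correct and is essentially the paper's argument. The paper also applies Jensen's inequality for the strictly concave $\log$ with weights $t_k/\sum_j t_j$; it uses the points $y_k/t_k$ rather than your rescaled points $Ty_k/t_k$, which changes nothing, and it obtains the equality case from strict concavity with strictly positive weights, just as you do.
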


\begin{proof}[Proof of Lemma~\ref{lemma:Jensen-corollary}]
Since $\log(\cdot)$ is concave, by Jensen's inequality the following holds for any positive $\{x_k\}_{k=1}^m$ and any positive $\{\lambda_k\}_{k=1}^m$ satisfying $\sum_{k=1}^m \lambda_k = 1$.
\begin{equation*}
\sum_{k=1}^m \lambda_k \log(x_k) \le \log \left(\sum_{k=1}^m \lambda_k x_k \right) .
\end{equation*}
Letting $\lambda_k = \frac{t_k}{\sum_{j=1}^m t_j}$ and $x_k = \frac{y_k}{t_k}$ gives
\begin{equation*}
\frac{1}{\sum_{k=1}^m t_k} \sum_{k=1}^m t_k \log\left( \frac{y_k}{t_k} \right) \le \log \left( \frac{\sum_{k=1}^m y_k}{\sum_{k=1}^m t_k} \right) .
\end{equation*}
Taking the exponential of both sides yields
\begin{equation*}
\left( \frac{\prod_{k=1}^m y_k^{t_k} }{\prod_{k=1}^m t_k^{t_k}} \right)^{\frac{1}{t_1+\cdots+t_m}} \le \frac{\sum_{k=1}^m y_k}{\sum_{k=1}^m t_k},
\end{equation*}
which finishes the proof of the desired inequality. The equality condition of the last inequality follows from the previous Jensen's inequality, which requires $\frac{y_1}{t_1} = \cdots = \frac{y_m}{t_m}$ since the function $\log(\cdot)$ is strictly concave and the multipliers $\frac{t_k}{\sum_{j=1}^m t_j}$ are all strictly positive.
\end{proof}